%
%
%
%


\documentclass[11pt,twoside]{article}


\usepackage[square,numbers]{natbib}

\iffalse 
\usepackage[autostyle]{csquotes}
\usepackage[
    backend=biber,
    style=numeric,
    firstinits = true,
    url=false, 
    doi=true,
    eprint=true
]{biblatex}


\renewbibmacro{in:}{%
  \ifentrytype{article}{}{%
  \printtext{\bibstring{in}\intitlepunct}}}

\DeclareFieldFormat[article,unpublished,book]{pages}{#1}  

\DeclareFieldFormat[article]{title}{#1}
\DeclareFieldFormat[unpublished]{title}{#1}
\DeclareFieldFormat[misc]{title}{#1}
\fi 

\newcommand*{\doi}[1]{\href{http://dx.doi.org/#1}{doi: #1}}



\usepackage{lipsum} 



\usepackage{microtype} 

\usepackage{lmodern}
\usepackage[sans]{dsfont}
\usepackage[latin1]{inputenc}
\usepackage{color}
\usepackage{delimdelim}

\usepackage[hmarginratio=1:1,top=32mm,columnsep=20pt]{geometry} 
\geometry{left=18mm,right=18mm,top=22mm,bottom=22mm}

\usepackage{booktabs} 
\usepackage{hyperref} 

\usepackage{paralist} 

\usepackage{abstract} 


\newcommand{\ep}{\epsilon}
\newcommand{\m}{\mbox{d}}

\definecolor{fu}{RGB}{0, 0, 0}
\definecolor{gr}{RGB}{21, 194, 21}

\usepackage{titlesec} 

\usepackage{fancyhdr} 
\pagestyle{fancy} 
\fancyhead{} 
\fancyfoot{} 
\fancyfoot[RO,LE]{\thepage} 


\usepackage{graphicx}
\usepackage{amsmath}
\usepackage{amssymb}
\usepackage{amsthm}
\usepackage{subfigure}
\usepackage{textcomp}
\usepackage{listings}
\usepackage{setspace}
\usepackage{graphicx}
\usepackage{epstopdf}
\usepackage{eurosym}
\usepackage{inputenc}
\usepackage{inputenc}
\usepackage{appendix}
\usepackage{tabularx}
\usepackage{array}
\usepackage{makeidx}
\usepackage{etoolbox}
\usepackage{mathtools}
\usepackage{empheq}
\usepackage{mathrsfs}
\usepackage{marginnote}







\fancyhead[LE,RO]{\thepage}
\fancyfoot{}


\DeclareGraphicsExtensions{.eps}
\DeclareGraphicsExtensions{.png}

\theoremstyle{plain}
\newtheorem{theorem}{Theorem}[section]
\newtheorem{lemma}[theorem]{Lemma}

\newtheorem{prop}[theorem]{Proposition}
\newtheorem{mydef}[theorem]{Definition}

\newtheorem*{assumg*}{Assumption~(G)}

\newtheorem*{assumng*}{Assumption~(NG)}
\newtheorem*{assumng1*}{Assumption~(NG-bis)}

\newcommand{\per}{\mathrm{per}}


\theoremstyle{definition}

\newtheorem{rem}[theorem]{Remark}

\hypersetup{citecolor=black}


 {\proof[#1]\leftskip=0.0cm\rightskip=0.0cm}
 {\endproof}

\label{mathrefs}

\newlength\tindent
\setlength{\tindent}{\parindent}
\setlength{\parindent}{0pt}



\title{\vspace{-15mm}\fontsize{24pt}{10pt} \LARGE{\scshape{A regularised Dean--Kawasaki model: derivation and analysis}}} 
\small{
\author{
\textsc{Federico Cornalba\footnote{e-mail: {\ttfamily F.Cornalba@bath.ac.uk}},\hspace{0.2 pc} Tony Shardlow\footnote{e-mail: {\ttfamily T.Shardlow@bath.ac.uk}},\hspace{0.2 pc} Johannes Zimmer\footnote{e-mail: {\ttfamily J.Zimmer@bath.ac.uk}}}
\vspace{0.5 pc}\\ \footnotesize{Department of Mathematical Sciences, University of Bath, Bath, BA2 7AY, United Kingdom}
\vspace{-5mm}
}}
\date{}


\begin{document}

\maketitle 
\thispagestyle{empty}

\pagestyle{fancy} 


\renewenvironment{abstract}
 {\small
  \begin{center}
  \bfseries \abstractname\vspace{-0.0 pc}\vspace{0pt}
  \end{center}
  \list{}{%
    \setlength{\leftmargin}{10mm}
    \setlength{\rightmargin}{\leftmargin}%
  }%
  \item\relax}
 {\endlist}

 \newenvironment{thanks}
 {
  \list{}{%
    \setlength{\leftmargin}{0mm}
    \setlength{\rightmargin}{\leftmargin}%
  }%
  \item\relax}
 {\endlist}

\begin{abstract}
  The Dean--Kawasaki model consists of a nonlinear stochastic partial differential equation featuring a conservative,
  multiplicative, stochastic term with non-Lipschitz coefficient, and driven by space-time white noise; this equation describes
  the evolution of the density function for a system of finitely many particles governed by Langevin dynamics. Well-posedness for
  the Dean--Kawasaki model is open except for specific diffusive cases, corresponding to overdamped Langevin dynamics. There, it
  was recently shown by Lehmann, Konarovskyi, and von Renesse that no regular (non-atomic) solutions exist.

  We derive and analyse a suitably regularised Dean--Kawasaki model of wave equation type driven by coloured noise, corresponding
  to second order Langevin dynamics, in one space dimension. The regularisation can be interpreted as considering particles of
  finite size rather than describing them by atomic measures. We establish existence and uniqueness of a solution. Specifically,
  we prove a high-probability result for the existence and uniqueness of mild solutions to this regularised Dean--Kawasaki model.

  {\bfseries Key words}: Dean--Kawasaki model, stochastic wave equation, spatial regularisation of space-time white noise,
  Langevin dynamics, mild solutions.
  
  {\bfseries AMS (MOS) Subject Classification}: 60H15 (35R60)

\end{abstract}

\section{Introduction}


Fluctuating hydrodynamics is concerned with the description of the evolution of a large number of particles by means of suitable
stochastic partial differential equations. We refer the reader to~\cite{Eyink1990a} and give as an example the
\emph{Dean--Kawasaki} model~\cite{Dean1996a,Kawasaki1998a}
\begin{align}
  \label{eq:1}
  \frac{\partial \rho}{\partial t}(x,t)=\underbrace{\nabla\cdot\left(\rho(x,t)\,
  \nabla\frac{\delta F(\rho)}{\delta \rho}\right)}_{=: 
  \mathscr{D}}+\underbrace{\nabla\cdot\left(\sigma\sqrt{\rho(x,t)}\,\xi\right)}_{=:\mathscr{S}}.
\end{align}

Here $\rho\colon D\times[0,T]\subset\mathbb{R}^d\times[0,+\infty]\rightarrow [0,+\infty]$ is the density of particles, $\sigma$ is
a small real parameter, $F$ is a free-energy functional, and $\xi$ is a space-time white noise. The deterministic term
$\mathscr{D}$ is a gradient-flow-driven term describing the average behaviour of the system, and can be derived from the
Fokker--Planck analysis. The stochastic term $\mathscr{S}$ accounts for fluctuations about the mean due to the finite number of
particles in the system. As a result of the divergence form, both the terms $\mathscr{D}$ and $\mathscr{S}$ account for
conservation of mass in the system, see also~\cite{Fehrman2017a,Fischer2018a} for similar models.

Equation~\eqref{eq:1} poses a fascinating mathematical challenge. On one side, this equation and its more complex incarnations are
widely simulated in physics; see for example~\cite[Eq.~(59)]{Thompson2011a},~\cite{Lutsko2012a}
and~\cite{Duran-Olivencia2017a}. On the other hand, very little is known about existence and uniqueness of solutions for this
class of problems, as discussed below.

We point out three main difficulties posed by~\eqref{eq:1} from a mathematical perspective. Firstly, the noise term $\mathscr{S}$
is defined by means of a formal divergence operator. The regularity of the argument of the divergence operator is \emph{a priori}
unknown. In particular, a standard $L^2(D)$-valued stochastic analysis for the argument $\sigma\sqrt{\rho(x,t)}\,\xi$ (in the
sense of~\cite{Prevot2007a,Da-Prato2014a}, for example) would not allow us to interpret the noise $\mathscr{S}$,
hence~\eqref{eq:1}, in a function setting.  Secondly, the derivation of~\eqref{eq:1} in the physics literature is formal and only
applicable to empirical (thus atomic) measures.  Whether a solution to~\eqref{eq:1} for smooth initial data exists is in general
not clear. Thirdly, the lack of Lipschitz continuity associated with the square root poses further difficulties.

Von Renesse and collaborators have studied regularised versions of~\eqref{eq:1} in the foundational
works~\cite{Renesse2009a,Andres2010a,Konarovskyi2017a,Konarovskyi2018a}.  They obtain existence results for measure-valued
martingale solutions for modifications of~\eqref{eq:1} (in~\cite{Andres2010a,Renesse2009a} for the Gibbs--Boltzmann entropy
functional $F$ scaled by $\mu>0$, and in~\cite{Konarovskyi2017a} for the case $F\equiv0$). These modifications affect the drift
of~\eqref{eq:1}, and they are associated with Dirichlet form arguments and with the Wasserstein geometry over the space of
probability densities.

Very recently, Lehmann, Konarovskyi, and von Renesse~\cite{Lehmann2018} dispelled the belief that there are smooth solutions to
the purely diffusive Dean--Kawasaki equation.  More precisely, for~\eqref{eq:1} in one space dimension with free energy
$F:=\frac{N}{2}\int_{D}{\rho(x)\log(\rho(x))\m x}$, where~\eqref{eq:1} becomes
\begin{align*}
  \label{eq:dean-max1}
  \frac{\partial \rho}{\partial t}(x,t) = \frac N 2 \Delta \rho(x,t) + \nabla \cdot \left(\sqrt{\rho(x,t)}\,\xi\right),
\end{align*}
they showed that a unique measure-valued martingale solution exists if and only if $N\in\mathbb{N}$; in this case, the solution is
the empirical distribution associated with $N$ independent Brownian particles, so an atomic measure. The basis of this dichotomy
is the interplay of the particular geometry of diffusion and noise in the context of a stochastic Wasserstein gradient flow.  We
also mention that a similar setting later led the authors of~\cite{Lehmann2018} to obtain an analogous dichotomy in the case of
more general smooth drift potentials $F$~\cite{Lehmann2018b}.

The central differences to the approach presented below are that in~\cite{Lehmann2018}, the underlying particle dynamics is first
order (overdamped Langevin); the noise is derived from deep probabilistic arguments (describing Brownian motion in the space of
probability measures with finite second moment, i.e., relying on the Wasserstein geometry); and the noise is not regularised.

The original derivation of Dean--Kawasaki equations is mathematically opaque, with one noise being replaced by a stochastically
equivalent one, and with physical approximations closing the model in the density $\rho$ under the assumption of local equilibrium
(see \emph{Steps 2-3} in Subsection~\ref{sec:10} below); since the existence of solutions to this type of equations is so
delicate, we revisit the derivation, introduce physically motivated regularisations and then establish existence and uniqueness of
solutions (in a high probability sense). The starting point are undamped (second order) Langevin equations with on-site potential,
describing the motion of finitely many particles. A key point for modelling the particles is that we do not describe them by
atomic (Dirac) measures; instead, each particle is given by a Gaussian with standard deviation $\ep\ll 1$, centred on the particle
positions (see Figure~\ref{fig:1}). As a consequence, standard tools from stochastic calculus apply to the empirical density for
$N$ such particles. We find it useful to work with (a regularised version of) the empirical measure
$\frac 1 N \sum_{i=1}^N \delta(x-q_i)$ and remark that both~\cite{Lehmann2018} and~\cite{Dean1996a} use the different, but
equivalent, scaling $\sum_{i=1}^N \delta(x-q_i)$, see~\eqref{eq:2} below.  The advantage of the scaling chosen here is that the
limit of the number of particles $N\to\infty$ is well-defined, leading to the hydrodynamic scale, and that we work in the setting
of probability measures. Specifically, we study suitably combined limits of the number of particles $N$ going to $\infty$ and the
width parameter $\ep$ going to $0$. Then, the noise in the resulting equations scales with $N^{-1/2}$ and disappears in the limit
$N\to\infty$ (in contrast to~\eqref{eq:1}; the dependency on the scaling in the deterministic and stochastic operator
in~\eqref{eq:1} also plays a role in~\cite{Andres2010a,Renesse2009a,Lehmann2018}).  As in the original derivation by
Dean~\cite{Dean1996a}, we then replace a non-closed expression for the noise obtained by It\^o calculus with a stochastically
equivalent one; yet, in the framework we establish, the new noise can be compared to the original one and we obtain error bounds,
and show that their difference is small. In addition, we replace a non-closed component of the deterministic drift with a closed
expression by working in a low temperature regime for the Langevin system. We are then in a position to formulate, for large but
finite $N$, a regularised stochastic wave equation of Dean--Kawasaki type.  For this equation, we establish a high probability
existence and uniqueness result for mild solutions using a small-noise-regime analysis; more specifically, we invoke a Chebyshev
inequality argument to prove that the solution stays close to a suitable deterministic process which is positive and bounded away
from the non-Lipschitz noise singularity (i.e., from the identically vanishing density).

The general philosophy of this paper to derive stochastic equations describing the evolution of $N$ Gaussians with given variance
instead of $N$ Diracs seems to be novel. Yet it seems to be natural and potentially useful in a variety of situations. For
example, if one seeks to analyse the evolution of finitely many droplets in a suspension, then the description of a droplet by a
Gaussian seems at least as natural as a description by a Dirac. The stochastic equation derived and studied here describes the
evolution of such a system of particles. Additionally, the tightness arguments in $N$ and $\epsilon$ developed in
Subsection~\ref{sec:6} are of independent interest. While we use them as novel argument to compare noise expressions, they can
also be useful in an alternative derivation of the hydrodynamic limit, though we do not pursue this avenue in this article.

Before describing this approach in more detail, we sketch the derivation commonly taken in the physical literature.

\subsection{Original model derivation in dimension $d=1$}
\label{sec:10}

The \emph{Dean--Kawasaki} model~\cite{Dean1996a,Kawasaki1998a} arises in the mathematical description of a system of
\emph{finitely many} particles experiencing Langevin dynamics. We briefly discuss the derivation of this model by
following~\cite[Sec.~II]{Lutsko2012a}. Consider $N$ stochastically independent and identically distributed particles moving on the
real line, with position and velocity $\{(q_i,p_i)\}_{i=1}^{N}$. More precisely, their evolution is given by the Langevin dynamics
\begin{equation}
  \label{eq:24}
  \left\{
    \begin{array}{l}
      \displaystyle \dot q_i= p_i,  \vspace{0.2 pc}\\
      \displaystyle\dot p_i= \left(-\gamma p_i-V'(q_i)\right)+\sigma\,\dot\beta_i,\qquad i=1,\cdots,N,
    \end{array}
  \right.
\end{equation}
starting from independent and identically distributed initial conditions $\{(q_{i,0},p_{i,0})\}_{i=1}^{N}$. In~\eqref{eq:24},
$\{\beta_i\}_{i=1}^{N}$ is a family of independent standard Brownian motions on a probability space
$(\Omega,\mathcal{F},\mathbb{P})$, where $\sigma,\gamma>0$ are given constants satisfying the fluctuation-dissipation relation
$\sigma^2/(2\gamma)=k_{B}T_{e}$ (see for example~\cite{Chandler1987a}), and $V\colon\mathbb{R}\rightarrow\mathbb{R}$ is a
potential.  The particle system is described in terms of the global quantities
\begin{equation}
  \label{eq:2}
  \rho_N(x,t):=\sum_{i=1}^{N}{\delta(x-q_i(t))} \text{ and } j_N(x,t):=\sum_{i=1}^{N}{p_i(t)\delta(x-q_i(t))},\qquad x\in\mathbb{R},\,\,t
  \geq 0,
\end{equation}
representing the \emph{local density} and the \emph{momentum density}, respectively. These quantities, which are not rescaled in
$N$, are to be understood in the Schwartz distribution sense, due to the presence of the Dirac distributions, denoted by
$\delta$. We sketch below how this leads to~\eqref{eq:1}, the \emph{Dean--Kawasaki} stochastic partial differential
equation~\cite{Dean1996a,Kawasaki1998a}, following~\cite{Lutsko2012a}.

\emph{Step 1}. Evolution equations of first order in time~\cite[Eq.~(4)]{Lutsko2012a} are derived for both $\rho_N$ and $j_N$ by
means of standard It\^o calculus, in a distributional sense. These equations are a simple superposition of the stochastic
equations resulting from the Langevin dynamics~\eqref{eq:24} of each particle $i=1,\cdots,N$. The evolution equation for $\rho_N$
is a conservation law associated with the momentum density, and it reads $\partial\rho_N/\partial t=-\nabla\cdot j_N$. The
evolution equation for $j_N$ is, broadly speaking, an undamped equation perturbed by a particle-dependent stochastic noise.

\emph{Step 2.} The aforementioned particle-dependent noise featured in the stochastic equation~\cite[Eq.~(4)]{Lutsko2012a}
associated with $j_N$ is not of closed form (i.e., it cannot be expressed as a simple function of the quantities $\rho_N$ and
$j_N$). This noise is
\begin{align}
  \label{eq:84}
  \sigma\sum_{i=1}^{N}{\delta(x-q_i(t))\dot\beta_i}.
\end{align} 
For this reason, the above noise is \emph{formally} replaced by another noise preserving the spatial covariance structure
of~\eqref{eq:84}. The latter noise takes the shape
\begin{align}
  \label{eq:85}
  \sigma\sqrt{\rho_N(x,t)}\,\xi,
\end{align}
where $\xi$ is a space-time white noise.

\emph{Step 3.} The first order evolution equations for $\rho_N$, $j_N$ (with the noise replacement~\eqref{eq:85}) are then
analysed on the hydrodynamic scale under a local equilibrium assumption, thus giving equations in some new variables $\rho$ and
$j$~\cite[Eq.~(11)]{Lutsko2012a}. In one space dimension, this system reads
\begin{equation}
  \label{eq:80-L}
  \left\{
    \begin{array}{l}
      \vspace{-0.8 pc}\\
      \displaystyle \frac{\partial \rho}{\partial t}(x,t)  = -\frac{\partial j}{\partial x}(x,t),  \vspace{0.5 pc}\\
      \displaystyle\frac{\partial j}{\partial t}(x,t)  = \left(-\gamma 
      j(x,t) - \rho(x,t) \nabla\frac{\delta F(\rho)}{\delta \rho}\right) + \eta \sqrt{\rho(x,t)} \xi
    \end{array}
  \right.
\end{equation}
(in suitable units, with a small parameter $\eta$), where $F$ is a suitable free-energy functional, and $\delta$ denotes
variational differentiation. The equations in \eqref{eq:80-L} are then combined into a dissipative wave equation which is closed
in the variable $\rho$~\cite[Eq.~(12)]{Lutsko2012a}. This step provides the divergence operator for the stochastic noise
of~\eqref{eq:1}. The final evolution equation~\eqref{eq:1} is obtained by passing to the overdamped limit. We will not follow this
last step and instead study a stochastic damped wave equation which can be seen as regularisation of~\eqref{eq:80-L},
see~\eqref{eq:80} below. For details of the procedure just sketched, we refer the reader to~\cite[Secs.~IIA, IIB]{Lutsko2012a}
and~\cite{Dean1996a,Kawasaki1998a}.


\subsection{Summary of the paper and main results}

We now summarise the contents and main results of this paper.

We set the notation in Subsection~\ref{sec:3}. In Subsection~\ref{sec:2}, we define two different sets of hypotheses regarding the
potential $V$, referred to as Assumption~(G) and Assumption~(NG). The first one is associated with a vanishing potential,
$V \equiv 0$, which makes some specific tools of the theory of Gaussian random variables applicable. The second assumption allows
for a polynomially diverging potential $V(q)\approx |q|^{2n}$, in the context of a Fokker--Planck analysis for~\eqref{eq:24}.

\emph{Derivation of the regularised Dean--Kawasaki model:} This is the content of Section~\ref{s:4}, and we proceed by adapting
the procedure sketched in \emph{Steps 1-2}, Subsection~\ref{sec:10}, to a \emph{function} context rather than the original
distributional setting~\cite{Dean1996a,Kawasaki1998a}. We resolve the formal replacement of the noise highlighted in Section
\ref{sec:10} by smoothing the defining components of $\rho_N$ and $j_N$. Specifically, we keep the Langevin particle system
\eqref{eq:24}, and consider the \emph{$\ep$-smoothed local density} and \emph{$\ep$-smoothed momentum density},
\begin{align}
  \label{eq:102} 
  \rho_{\ep}(x,t):=\frac{1}{N}\sum_{i=1}^{N}{w_{\ep}(x-q_i(t))} \text{ and }
  j_{\ep}(x,t):=\frac{1}{N}\sum_{i=1}^{N}{p_i(t)w_{\ep}(x-q_i(t))},\qquad x\in\mathbb{R},\,t\geq 0,
\end{align}
where $\ep>0$ and $w_{\ep}(x):=(2\pi\ep^2)^{-1/2}\exp\{-x^2/(2\ep^2)\}$ is the Gaussian kernel with mean $0$ and variance $\ep^2$,
see also Definition~\ref{la:1}. The kernels $w_{\ep}$ approximate the Dirac delta distribution for small values of $\ep$. Notice
that $\rho_{\ep}$ and $j_{\ep}$ include a rescaling in the number of particles, while $\rho_N$ and $j_N$ do not.

\begin{figure}[h]
  \begin{center}
    \includegraphics[width=16cm,height=5cm]{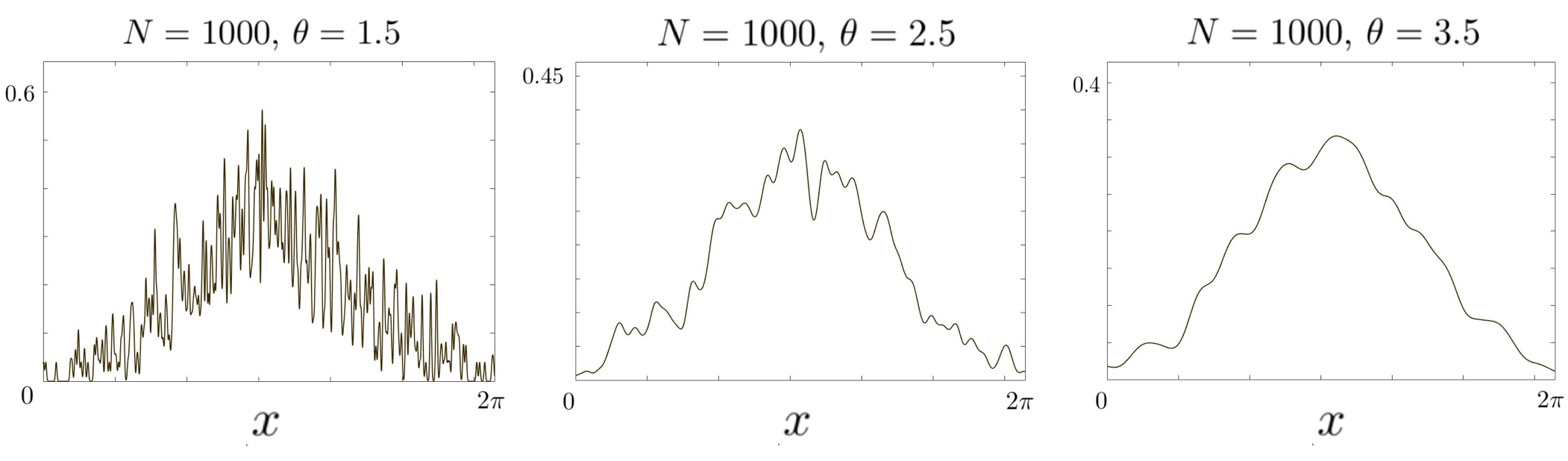}
    \caption{Numerical simulation of the \emph{$\ep$-smoothed} local density
      $\rho_{\ep}(\cdot,t)=N^{-1}\sum_{i=1}^{N}{w_{\ep}(\cdot-q_i(t))}$ defined in~\eqref{eq:102}, for a fixed time $t$, and on
      $D=[0,2\pi]$. In this specific example, $q_i(t)\sim\mathcal{N}(\pi,10^{0.2})$, $N=1000$, and $N$ and $\ep$ satisfy the
      scaling $N\ep^{\theta}=1$ for $\theta = 1.5$ (left), $\theta = 2.5$ (middle), $\theta = 3.5$ (right). The smoothness of the
      density increases with $\theta$.}     
      \label{fig:1}
  \end{center}
\end{figure}

We use the $\ep$-smoothed quantities~\eqref{eq:102} instead of the original quantities~\eqref{eq:2} and follow the same guidelines
described in \emph{Steps 1-2} of Subsection~\ref{sec:10} in order to derive the regularised Dean--Kawasaki model. There, we will
also consider the quantity
\begin{align}
  \label{eq:120}
  j_{2,\ep}(x,t):=\frac{1}{N}\sum_{i=1}^{N}{p^2_i(t)w'_{\ep}(x-q_i(t))}.
\end{align}

We do not adapt \emph{Step 3} of Subsection~\ref{sec:10}, as we will not combine the equations for $\rho_{\ep},j_{\ep}$ or use the
hydrodynamic limit theory.

We perform the analysis of the regularised Dean--Kawasaki model both for fixed values of $N$ and $\ep$, and also by means of a
simultaneous limit involving $N\rightarrow\infty$ and $\ep\rightarrow 0$, for $N$ and $\ep$ satisfying a prescribed scaling. We
first prove some preliminary uniform estimates for the three families of processes $\{\rho_{\ep}\}_{\ep}$ ,$\{j_{\ep}\}_{\ep}$,
$\{j_{2,\ep}\}_{\ep}$ given in~\eqref{eq:102} and~\eqref{eq:120}, as $\ep\rightarrow 0$. We have the following result.

\begin{prop}[Tightness of $\{\rho_{\ep}\}_{\ep}, \{j_{\ep}\}_{\ep}, \{j_{2,\ep}\}_{\ep}$]
  \label{p:1} 
  Let $T>0$, and let $D\subset \mathbb{R}$ be a bounded domain. Assume the validity of either Assumption~(G) or Assumption~(NG),
  given below in Subsection~\ref{sec:2}. Then the families of processes of $\{\rho_{\ep}\}_{\ep}, \{j_{\ep}\}_{\ep}$ are tight in
  $C(0,T;L^2(D))$ and $C(0,T;L^4(D))$, respectively, for $N\ep^{\theta}\geq 1$, with $\theta\geq 3$. In addition, the family
  $\{j_{2,\ep}\}_{\ep}$ is tight in $C(0,T;L^4(D))$ for $N\ep^{\theta}\geq 1$, with $\theta\geq 5$.
\end{prop}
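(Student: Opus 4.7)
The plan is to apply the standard compactness/continuity tightness criterion in $C(0,T;X)$: it suffices to establish, uniformly in $\ep$, a moment bound $\sup_{t\in[0,T]}\mathbb{E}\|X_\ep(t)\|_Y^q\leq C$ for some Banach space $Y$ compactly embedded in $X$, together with a time-Hölder moment bound $\mathbb{E}\|X_\ep(t)-X_\ep(s)\|_X^q\leq C|t-s|^{1+\delta}$ for some $\delta>0$. Rellich--Kondrachov compactness on the bounded domain $D$ combined with Kolmogorov's continuity theorem then yields tightness. For $\rho_\ep$ I would take $X=L^2(D)$ and $Y=H^1(D)$; for $j_\ep$ and $j_{2,\ep}$, $X=L^4(D)$ and $Y=W^{s,4}(D)$ for a suitable $s\geq 1$.

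For the spatial moment bounds I would expand the norm using exchangeability and the independence of the $N$ particles. A generic quantity of the form $Y_\ep(x)=N^{-1}\sum_{i=1}^N\phi(p_i(t))K_\ep(x-q_i(t))$, with $K_\ep$ a derivative of the Gaussian kernel of order $\ell$, satisfies
\[
\mathbb{E}\|Y_\ep\|_{L^{2k}(D)}^{2k}=\frac{1}{N^{2k-1}}\mathbb{E}\bigl[\phi(p_1(t))^{2k}\bigr]\|K_\ep\|_{L^{2k}(\mathbb{R})}^{2k}+(\text{off-diagonal terms uniformly bounded in }\ep),
\]
and the scaling $\|w_\ep^{(\ell)}\|_{L^{2k}(\mathbb{R})}^{2k}\sim \ep^{1-2k(\ell+1)}$ gives uniform boundedness provided $N^{2k-1}\ep^{2k(\ell+1)-1}\gtrsim 1$. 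The required moments of $p_i(t)$ are bounded uniformly on $[0,T]$: under Assumption~(G), $p_i$ is a Gaussian Ornstein--Uhlenbeck process with explicit moments; under Assumption~(NG), a Fokker--Planck/Lyapunov argument for the polynomial potential yields the bound.

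For the temporal regularity, I would apply It\^o's formula to each of $w_\ep(x-q_i(t))$, $p_i(t)w_\ep(x-q_i(t))$, $p_i(t)^2w_\ep'(x-q_i(t))$ and integrate in time. For $\rho_\ep$ only the deterministic identity $\partial_t\rho_\ep=-\partial_x j_\ep$ arises, so the Hölder bound follows by Cauchy--Schwarz from the spatial moment estimate on $\partial_x j_\ep$, giving the threshold $\theta\geq 3$. For $j_\ep$ the It\^o expansion contains drift terms $-\gamma j_\ep$, $-N^{-1}\sum_i V'(q_i)w_\ep(x-q_i)$, $-j_{2,\ep}$, and a stochastic integral $N^{-1}\sigma\sum_i\int w_\ep(x-q_i)\,d\beta_i$; the martingale contribution is estimated by applying Burkholder--Davis--Gundy pointwise in $x$, integrating over $D$, and using convolution identities such as $\|w_\ep^2\ast w_\ep^2\|_{L^\infty}\sim \ep^{-3}$. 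For $j_{2,\ep}$ the It\^o decomposition produces a drift $-N^{-1}\sum_i p_i^3 w_\ep''(x-q_i)$ (one extra spatial derivative) together with a stochastic integrand $2\sigma p_i w_\ep'(x-q_i)$, and controlling these in the compactly embedded spatial space imposes the stricter threshold $\theta\geq 5$.

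The main obstacle will be the bookkeeping of the derivative loss in $\ep$: the Gaussian kernel loses (roughly) a factor $\ep^{-1}$ in each $L^{2k}$-norm per spatial derivative, either as introduced by the compact embedding or as generated by It\^o's formula. Balancing this against the factor $N^{-(2k-1)}$ from exchangeability and the constraint $N\ep^\theta\geq 1$ yields the announced thresholds, the tightest being $\theta\geq 5$ for $j_{2,\ep}$ owing to the compounded presence of the high-moment prefactor $p_i^{2m}$, the already-present $w_\ep'$, and the It\^o-generated $w_\ep''$.
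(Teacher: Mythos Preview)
Your overall framework matches the paper's: apply Kolmogorov's criterion together with a compact Sobolev embedding, and expand moments using independence and exchangeability of the particles. However, there is a genuine gap and a genuine difference in method.

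\textbf{The gap.} Your parenthetical ``off-diagonal terms uniformly bounded in $\ep$'' is doing essentially all the real work, and it is far from automatic. Take already the simplest instance, the cross term in $\mathbb{E}\|\rho_\ep(t)\|_{H^1}^2$: it equals $\|\mathbb{E}[w_\ep(\cdot-q_1(t))]\|_{L^2}^2+\|\mathbb{E}[w_\ep'(\cdot-q_1(t))]\|_{L^2}^2$, with \emph{no} $N$-prefactor to absorb any $\ep$-divergence. Uniform boundedness holds only because $\mathbb{E}[w_\ep(x-q_1(t))]=(w_\ep*f_{q(t)})(x)$ inherits the regularity of the particle density $f_{q(t)}$: under Assumption~(G) this is a Gaussian of variance $\sigma_q^2(t)+\ep^2\geq\nu>0$, under Assumption~(NG) one invokes Fokker--Planck regularity. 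For the time increments the cross term becomes $\|\mathbb{E}[\phi(p_1(t))K_\ep(\cdot-q_1(t))-\phi(p_1(s))K_\ep(\cdot-q_1(s))]\|_{L^c}^c$, again with no $N$-prefactor, and you must produce $\leq C|t-s|^{1+\beta}$ uniformly in $\ep$. This is the heart of the argument; it requires a conditional-law computation to decouple $p$ from $q$ and then a time-Lipschitz estimate on $\mathcal{G}(x,\mu_q(t),\sigma_q^2(t)+\ep^2)$ times a polynomial. The paper isolates this as a separate lemma; you should at least name the mechanism rather than sweep it into a parenthesis.

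Similarly, the ``partially diagonal'' terms (two equal indices among four, or two pairs) carry prefactors $N^{-1}$ or $N^{-2}$ but also genuinely $\ep$-divergent factors such as $N^{-1}\int_D\mathbb{E}[\tau_1(x)]^2\,\mathbb{E}[\tau_1(x)^2]\,\m x$. It is precisely this mixed term, not the pure diagonal, that sets the thresholds $\theta\geq 3$ and $\theta\geq 5$ in the paper.

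\textbf{The methodological difference.} For the time-H\"older bound the paper does \emph{not} use It\^o's formula. It estimates the increments directly via the Gaussian multiplication identity and the elementary bound $1-e^{-x^2}\leq x^2$; for instance
\[
\mathbb{E}\|w_\ep(\cdot-q(t))-w_\ep(\cdot-q(s))\|_{L^2(\mathbb{R})}^2=(4\pi\ep^2)^{-1/2}\,\mathbb{E}\bigl[1-e^{-(q(t)-q(s))^2/4\ep^2}\bigr]\leq C\ep^{-3}\,\mathbb{E}|q(t)-q(s)|^2,
\]
with analogous but more elaborate algebra for $j_\ep$ and $j_{2,\ep}$. Your It\^o route should also lead to the goal, but it introduces a cascade: controlling $j_\ep$ in time requires a uniform spatial $L^4$ bound on the drift term $j_{2,\ep}$, and controlling $j_{2,\ep}$ in time then requires a spatial bound on $N^{-1}\sum_i p_i^3 w_\ep''(\cdot-q_i)$. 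You should check that these auxiliary spatial bounds actually hold at the stated thresholds (they do, but verifying them is not shorter than the paper's direct computation, and still hinges on the same cross-term machinery). The paper's direct approach avoids this cascade by treating each family in isolation.
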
 

Proposition~\ref{p:1} yields relative compactness in law for the families of processes
$\{\rho_{\ep}\}_{\ep}, \{j_{\ep}\}_{\ep}, \{j_{2,\ep}\}_{\ep}$ as $\ep\rightarrow 0$. We show convergence for the family
$\{\rho_{\ep}\}_{\ep}$ as $\ep\rightarrow 0$ in the following result.

\begin{prop}
  \label{p:22} 
  Let $T>0$, and let $D\subset \mathbb{R}$ be a bounded domain. Assume the validity of either Assumption~(G) or Assumption~(NG),
  as well as the scaling $N\ep^{\theta}\geq 1$, for some $\theta\geq 3$. For each $\ep>0$, let $\eta_{\ep}$ be the law of the
  process $\rho_{\ep}$ on $\mathcal{X}:=C(0,T;L^2(D))$. There exists a probability measure $\eta$ on $\mathcal{X}$ such that
  $\eta_{\ep}\stackrel{w}{\rightarrow} \eta\mbox{ in }\mathcal{X}\mbox{ as }\ep\rightarrow 0$. Here $\stackrel{w}{\rightarrow}$
  denotes weak convergence of measures.
\end{prop}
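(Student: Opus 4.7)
The plan is to combine the tightness established in Proposition~\ref{p:1} with an explicit identification of the weak limit, thereby showing that \emph{every} subsequential limit coincides with a single (and in fact deterministic) probability measure. Tightness gives relative compactness by Prokhorov's theorem, so it suffices to prove that any two subsequential limits of $\{\eta_\ep\}$ agree. My candidate limit is $\eta := \delta_{\bar{\rho}}$, the Dirac mass on $\mathcal{X}$ concentrated at the curve $t \mapsto f_t$, where $f_t$ is the marginal density at time $t$ of the position component $q_1(t)$ in the Langevin system~\eqref{eq:24}. Under Assumption~(G) ($V\equiv 0$) this density is explicit, while under Assumption~(NG) it is the $q$-marginal of the solution to the associated Fokker--Planck equation; in either case one checks that $\bar{\rho} \in C(0,T;L^2(D))$.

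The identification proceeds by a law-of-large-numbers argument exploiting the i.i.d.\ structure of the particles. Writing $\rho_\ep(x,t) = N^{-1}\sum_{i=1}^N w_\ep(x-q_i(t))$, one has $\mathbb{E}[\rho_\ep(x,t)] = (f_t * w_\ep)(x)$, which converges to $f_t(x) = \bar{\rho}(x,t)$ in $L^2(D)$ uniformly in $t\in[0,T]$. For the fluctuation part, Fubini and independence yield
\[
\mathbb{E}\|\rho_\ep(\cdot,t)-\mathbb{E}\rho_\ep(\cdot,t)\|_{L^2(D)}^2 \;=\; \frac{1}{N}\int_D \mathrm{Var}\bigl(w_\ep(x-q_1(t))\bigr)\,dx \;\leq\; \frac{C|D|}{N\ep},
\]
using $\|w_\ep\|_{L^2(\mathbb R)}^2 \lesssim \ep^{-1}$. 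Under the scaling $N\ep^\theta\geq 1$ with $\theta\geq 3$, this bound is $O(\ep^{\theta-1})\to 0$. Combining both pieces, $\rho_\ep(\cdot,t) \to \bar{\rho}(\cdot,t)$ in $L^2(D)$ in probability, for every fixed $t\in[0,T]$.

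To upgrade this pointwise-in-time convergence to convergence in $\mathcal{X}$, let $\eta^*$ be any subsequential weak limit of $\{\eta_{\ep_n}\}$. Since $\mathcal{X}=C(0,T;L^2(D))$ is Polish, Skorokhod's representation theorem provides almost-surely convergent copies $\tilde{\rho}_{\ep_n}$ with limit $\tilde{\rho}^*$ in $\mathcal{X}$. Evaluating at any fixed $t$ and combining with the pointwise-in-$t$ convergence in probability forces $\tilde{\rho}^*(\cdot,t) = \bar{\rho}(\cdot,t)$ almost surely on a countable dense subset of $[0,T]$; continuity in $t$ of both sides then gives $\tilde{\rho}^* = \bar{\rho}$ a.s. Hence $\eta^* = \delta_{\bar{\rho}}$, independently of the subsequence, so $\eta_\ep \stackrel{w}{\to} \delta_{\bar{\rho}}$. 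The main technical obstacle is ensuring $\bar{\rho}\in C(0,T;L^2(D))$ with enough regularity for the convolution $f_t * w_\ep \to f_t$ to hold in $L^2(D)$ uniformly in time; this is where Assumptions~(G) and~(NG) enter, the former via explicit Gaussian computations and the latter via parabolic regularity for the Fokker--Planck equation.
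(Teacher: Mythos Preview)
Your argument is correct and takes a genuinely different route from the paper's. The paper proves uniqueness of the weak limit \emph{without identifying it}: given two subsequential limits $\eta_1,\eta_2$, it picks finitely many times $t_1,\dots,t_m$, a bounded Lipschitz test function $g$ on $[L^2(D)]^m$, and shows directly that $\mathbb{E}\!\int_{\mathbb{R}}(\rho_{a_n}(x,t_j)-\rho_{b_n}(x,t_j))^2\,\mathrm{d}x\to 0$ by expanding the square into six families of products, integrating in $x$ via the Gaussian product rule, and invoking $w_\ep\to\delta$ in $\mathcal{S}'$ against the Schwartz density of $q_1(t_j)-q_2(t_j)$; equality of finite-dimensional laws then forces $\eta_1=\eta_2$. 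Your approach instead pins down the limit as the Dirac mass at the deterministic curve $\bar\rho(\cdot,t)=f_t$ via a clean law-of-large-numbers estimate (mean convergence plus an $O(\ep^{\theta-1})$ variance bound), and then uses Skorokhod plus a countable-dense-times argument to upgrade pointwise-in-$t$ convergence in probability to convergence in $\mathcal{X}$.

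Your route is more elementary and strictly more informative, since it exhibits the limit explicitly (and confirms it is deterministic). The paper's route avoids having to verify that $\bar\rho\in C(0,T;L^2(D))$ and that $f_t\ast w_\ep\to f_t$ in $L^2(D)$, but these are minor under either assumption. One small remark: you do \emph{not} actually need the convergence $f_t\ast w_\ep\to f_t$ to be uniform in $t$; pointwise-in-$t$ convergence in $L^2(D)$ suffices, because the tightness/Skorokhod step already handles the passage to $\mathcal{X}$. So the ``main technical obstacle'' you flag is lighter than you suggest.
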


The proofs of Proposition~\ref{p:1} and~\ref{p:22} under Assumption~(G) are the content of Subsection~\ref{sec:6}.

The next step, covered in Subsection~\ref{sec:7}, is the analysis of the evolution equations for $\rho_{\ep}$ and $j_{\ep}$, namely
\begin{equation}
  \label{eq:80}
  \left\{
    \begin{array}{l}
      \vspace{-0.8 pc}\\
      \displaystyle \frac{\partial \rho_{\ep}}{\partial t}(x,t)  = -\frac{\partial j_{\ep}}{\partial x}(x,t),  \vspace{-0.8 pc}\\
      \displaystyle\frac{\partial j_{\ep}}{\partial t}(x,t) = \left(-\gamma 
      j_{\ep}(x,t)-j_{2,\ep}(x,t)-\frac{1}{N}\sum_{i=1}^{N}{V'(q_i(t))w_{\ep}(x-q_i(t))}\right)
      +\overbrace{\frac{\sigma}{N}\sum_{i=1}^{N}{w_{\ep}(x-q_i(t))\dot \beta_i}}^{=:\mathcal{\dot Z}_N(x,t)},
    \end{array}
  \right.
\end{equation}
where $\mathcal{\dot Z}_N(x,t)$ is well-defined due to regularity of $w_{\ep}$ and of the processes
$\{q_{i}\}_{i=1}^{N}$. System~\eqref{eq:80} is analogous to the system of evolution equations for the original quantities
$\rho_N,j_N$ mentioned in \emph{Step 1}, see~\cite[Eq.~(4)]{Lutsko2012a}.

In analogy to the original derivation of the Dean--Kawasaki model, the noise $\mathcal{\dot Z}_N$ is not an elementary function of
$\rho_{\ep}$ and $j_{\ep}$. For this reason, we rewrite $\mathcal{\dot Z}_N$ as
\begin{align}
  \label{eq:3002}
  \mathcal{\dot Z}_N \sim
  \overbrace{\!\frac{\sigma}{\sqrt{N}}\sqrt{\rho_{\ep/\sqrt{2}}}\,
  \underbrace{Q^{1/2}_{\sqrt{2}\ep}\,\xi}_{=:\tilde{\xi}_{\ep}}}^{=:\mathcal{\dot Y}_N}+\mathcal{\dot R}_N , 
\end{align}
where $\sim$ denotes equality in law, $\xi$ is again a space-time white noise, $Q_{\sqrt{2}\ep}$ is the
convolution operator with kernel $w_{\sqrt{2}\ep}$ on some spatial domain, and $\mathcal{\dot R}_N$ is a (small) stochastic
remainder. The noise $\mathcal{\dot Y}_N$ is properly defined for non-negative function $\rho_{\ep}$. The specific structure of
$\mathcal{\dot Y}_N$ is thoroughly discussed in Subsection~\ref{sec:7}. We estimate the ``difference'' between
$\mathcal{\dot Z}_N$ and $\mathcal{\dot Y}_N$ (i.e., the remainder $\mathcal{\dot R}_N$) with the following result.

\begin{theorem}[Error bounds for covariance structure in~\eqref{eq:80}]
  \label{thm:1}
  Assume the validity of either Assumption~(G) or Assumption~(NG). Let $D\subset\mathbb{R}$ be a bounded set, and let $T>0$. Let
  $N,\ep$ satisfy the scaling $N\ep^{\theta}= 1$, for some fixed $\theta\geq 7/2$. Let
  $Q_{\sqrt{2}\ep}\colon L^2(D)\rightarrow L^2(D)$ be the convolution operator with kernel $w_{\sqrt{2}\ep}$.
   \begin{enumerate}
   \item \label{it:cov1} There exists $C=C(D,T)$ such that the following estimates concerning the spatial covariance of
     $\mathcal{Z}_N$ and $\mathcal{Y}_N$ hold for any $t\in[0,T]$ and $x_1,x_2\in D$:
    \begin{align}
      \left|\mean{\mathcal{Z}_N(x_1,t)\mathcal{Z}_N(x_2,t)}-\mean{\mathcal{Y}_N(x_1,t)\mathcal{Y}_N(x_2,t)}\right| 
      & \leq \frac{C\sigma^2}{N}w_{\sqrt{2}\ep}(x_1-x_2)|x_1-x_2|^2,\label{eq:71}\\
      \left|\mean{\mathcal{Z}_N(x_1,t)\mathcal{Z}_N(x_2,t)}\right| & \leq \frac{C\sigma^2}{N}w_{\sqrt{2}\ep}(x_1-x_2)
                                                                     \label{eq:72}.
    \end{align}
  \item \label{it:cov2} $\mathcal{Z}_N$ and $\mathcal{Y}_N$ decay to 0 as $N\rightarrow\infty$ and $\ep\rightarrow
    0$. Specifically, for any $t\in[0,T]$ and any $x_1\in D$, we have
    \begin{align}
      \emph{Var}\left[\mathcal{Z}_N(x_1,t)\right]\leq C\ep^{\theta-1},\qquad \emph{Var}\left[\mathcal{Y}_N(x_1,t)\right]
      \leq C\ep^{\theta-1}.
    \end{align}
  \end{enumerate}
\end{theorem}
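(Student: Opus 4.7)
The plan is to reduce both covariances to explicit integrals via It\^o isometry and then to invoke the Gaussian product formula
\[
w_{\ep}(x_1-q)\,w_{\ep}(x_2-q) \;=\; w_{\sqrt{2}\ep}(x_1-x_2)\,w_{\ep/\sqrt{2}}(\bar x - q),\qquad \bar x := \tfrac{x_1+x_2}{2},
\]
which is the algebraic identity that dictates the choice of regularisation widths $\sqrt{2}\ep$ and $\ep/\sqrt{2}$ in the ansatz \eqref{eq:3002}. Applying It\^o isometry to $\mathcal Z_N$, using independence of $\{\beta_i\}$ and the i.i.d.\ particle structure together with this identity, yields
\[
\mathbb E[\mathcal Z_N(x_1,t)\mathcal Z_N(x_2,t)] = \tfrac{\sigma^2 w_{\sqrt 2\ep}(x_1-x_2)}{N}\,\mathbb E\!\int_0^t\! \rho_{\ep/\sqrt 2}(\bar x,s)\,ds.
\]
The analogous computation for $\mathcal Y_N$, using the It\^o isometry for the $Q$-Wiener process $\tilde\xi_\ep$ whose spatial covariance kernel is precisely $w_{\sqrt 2\ep}(x_1-x_2)$, gives
\[
\mathbb E[\mathcal Y_N(x_1,t)\mathcal Y_N(x_2,t)] = \tfrac{\sigma^2 w_{\sqrt 2\ep}(x_1-x_2)}{N}\,\mathbb E\!\int_0^t\! \sqrt{\rho_{\ep/\sqrt 2}(x_1,s)\,\rho_{\ep/\sqrt 2}(x_2,s)}\,ds.
\]
Bound \eqref{eq:72} is then immediate, since the expected density $u_s(x) := \mathbb E[\rho_{\ep/\sqrt 2}(x,s)] = (w_{\ep/\sqrt 2}\ast\nu_s)(x)$ is uniformly bounded in $\ep$ (the law $\nu_s$ of $q_1(s)$ has bounded density under either Assumption~(G) -- explicit Gaussian -- or Assumption~(NG) -- Fokker--Planck regularity).

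For \eqref{eq:71}, subtracting the two expressions reduces the task to establishing
\[
|E| := \left|\mathbb E\!\int_0^t\! \left[\rho_{\ep/\sqrt 2}(\bar x,s) - \sqrt{\rho_{\ep/\sqrt 2}(x_1,s)\,\rho_{\ep/\sqrt 2}(x_2,s)}\right]\!ds\right| \leq C|x_1-x_2|^2,
\]
with $C$ independent of $\ep$ and $N$. The symmetry $x_1\!\leftrightarrow\!x_2$ (at fixed $\bar x$) makes the integrand an even function of $y:=x_1-x_2$ vanishing at $y=0$, which is the source of the quadratic order. I split
\[
\rho(\bar x) - \sqrt{\rho(x_1)\rho(x_2)} = \bigl[\rho(\bar x) - \tfrac{1}{2}(\rho(x_1)+\rho(x_2))\bigr] + \tfrac{1}{2}\bigl(\sqrt{\rho(x_1)}-\sqrt{\rho(x_2)}\bigr)^2 =: A + B.
\]
The contribution of $A$ is handled cleanly: $\mathbb E \int_0^t A\,ds = \int_0^t[u_s(\bar x) - \tfrac{1}{2}(u_s(x_1)+u_s(x_2))]\,ds = O(|x_1-x_2|^2)$ by Taylor, with constant depending only on $\|u_s''\|_{L^\infty(D)}$, which is bounded uniformly in $\ep$ by transferring derivatives from $w_{\ep/\sqrt 2}$ onto the smooth density $\nu_s$.

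The hard part will be controlling $\mathbb E\!\int_0^t\! B\,ds \geq 0$, since pointwise Taylor of $\sqrt{\rho(\bar x\pm y/2)}$ introduces $\ep^{-k}$ factors from derivatives of the Gaussian kernel, and $\sqrt\rho$ is only H\"older regular where $\rho$ is small. My plan is a concentration argument. Using the Gaussian product identity once more gives
\[
\mathbb E[\rho_{\ep/\sqrt 2}(x_1,s)\rho_{\ep/\sqrt 2}(x_2,s)] = \bigl(1-\tfrac{1}{N}\bigr)u_s(x_1)u_s(x_2) + \tfrac{w_\ep(x_1-x_2)\,(w_{\ep/2}\ast\nu_s)(\bar x)}{N},
\]
exhibiting $\rho(x_1)\rho(x_2)$ as a fluctuation of order $1/N$ around the smooth deterministic product $u_s(x_1)u_s(x_2)$. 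Combining this with the uniform $L^2$/$L^4$ moment bounds afforded by the tightness estimates of Proposition~\ref{p:1}, and a second-order expansion of $\sqrt{\,\cdot\,}$ around $u_s(x_1)u_s(x_2)$ (in which the naive $O(1/N)$ correction and the Jensen-gap correction cancel at $y=0$ by the identity $\sqrt{\rho(\bar x)^2}=\rho(\bar x)$, and more generally combine into a quantity vanishing quadratically in $y$), reduces $\mathbb E B$ to the deterministic estimate $|\sqrt{u_s(x_1)u_s(x_2)} - u_s(\bar x)| = O(|x_1-x_2|^2)$, which follows by Taylor on the smooth function $u_s$ (bounded above and bounded away from zero on $D$ under either assumption).

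For part~\ref{it:cov2}, both $\mathcal Z_N(x_1,t)$ and $\mathcal Y_N(x_1,t)$ are mean-zero stochastic integrals, so their variance equals the covariance evaluated at $x_1=x_2$. Setting $x_1=x_2$ in the covariance formulae above, and noting $\sqrt{\rho(x_1)^2}=\rho(x_1)$, gives for both quantities
\[
\tfrac{\sigma^2 w_{\sqrt 2\ep}(0)}{N}\,\mathbb E\!\int_0^t\! \rho_{\ep/\sqrt 2}(x_1,s)\,ds = O\bigl(\sigma^2/(N\ep)\bigr) = O(\sigma^2\ep^{\theta-1}),
\]
using $w_{\sqrt 2\ep}(0) = O(\ep^{-1})$, the uniform bound on $u_s$, and the scaling $N\ep^\theta = 1$, which yields the claim.
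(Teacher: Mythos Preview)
Your derivation of the two covariance formulas, the bound \eqref{eq:72}, and part~\ref{it:cov2} are correct and coincide with the paper's argument. Your decomposition
\[
\rho(\bar x)-\sqrt{\rho(x_1)\rho(x_2)}=\underbrace{\rho(\bar x)-\tfrac12\bigl(\rho(x_1)+\rho(x_2)\bigr)}_{A}+\underbrace{\tfrac12\bigl(\sqrt{\rho(x_1)}-\sqrt{\rho(x_2)}\bigr)^2}_{B}
\]
is a legitimate alternative to the paper's splitting into $T_1$ and $T_2$ (obtained there from $|\sqrt{a^2}-\sqrt{a^2+c}|\le |c|/a$), and your treatment of $A$ is in fact cleaner than the paper's handling of its $T_1$: since only $\mathbb{E}[A]$ is needed, the second difference passes directly to the smooth deterministic density $u_s=w_{\ep/\sqrt2}\ast\nu_s$.

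The genuine gap is in your treatment of $B$. The ``concentration plus second-order Taylor of $\sqrt{\,\cdot\,}$ around $u_s(x_1)u_s(x_2)$'' is not a proof: the remainder in that Taylor expansion is $(Z-\mu)^2/8\xi^{3/2}$ with $\xi$ between $Z=\rho(x_1)\rho(x_2)$ and $\mu$, and since $\rho$ takes values arbitrarily close to $0$ with positive probability, $\xi^{-3/2}$ is not integrable without further input. Equivalently, your ``even in $y$, hence $O(y^2)$'' step for $y\mapsto\mathbb{E}\sqrt{\rho(\bar x+y/2)\rho(\bar x-y/2)}$ requires a uniform-in-$\ep$ bound on its second $y$-derivative, and differentiating under the expectation produces terms containing $(\rho(x_1)\rho(x_2))^{-3/2}(\partial_y(\rho(x_1)\rho(x_2)))^2$, which again forces control of negative moments of $\rho$. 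This is exactly what the paper supplies via Proposition~\ref{p:2} (the bound $\mathbb{E}[\rho_{\ep/\sqrt2}^{-2}(m)]\le C(D,T)$), and you never invoke anything of this kind. A concrete symptom of the gap: the hypothesis $\theta\ge 7/2$ plays no role in your argument, whereas in the paper it is precisely what is needed to bound $N^{-3}\mathbb{E}[\tau^4(x_1,m)]\le CN^{-3}\ep^{-7}|x_1-x_2|^4$ in the fourth-moment expansion \eqref{eq:65} used to estimate $T_2$. Your $B$ term, once written as $\tfrac12(\rho(x_1)-\rho(x_2))^2/(\sqrt{\rho(x_1)}+\sqrt{\rho(x_2)})^2$, is essentially the paper's $T_2$, and bounding it rigorously requires the same two ingredients: a H\"older split isolating a negative power of the density (handled by Proposition~\ref{p:2}) and a fourth-moment estimate on $\rho(x_1)-\rho(x_2)$ (where the $N$--$\ep$ scaling enters).
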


Theorem~\ref{thm:1}, which is proved in Subsection~\ref{sec:8} under Assumption~(G), quantifies the error introduced when
replacing the noise $\mathcal{\dot Z}_N$ with the multiplicative noise $\mathcal{\dot Y}_N$. More specifically, the bound
in~\eqref{eq:71} is negligible for $x_1$, $x_2$ close to each other, when compared with the bound in~\eqref{eq:72}. In addition,
both $\mathcal{\dot Z}_N$ and $\mathcal{\dot Y}_N$ are negligible for distant $x_1$ and $x_2$. In combination with
Proposition~\ref{p:1}, Theorem~\ref{thm:1} guarantees convergence of~\eqref{eq:80} to a deterministic system of equations, for
$N\rightarrow \infty$ and $\ep\rightarrow 0$. This differs from the original Dean--Kawasaki model, as we have rescaled in the
number of particles $N$.

\begin{rem}
  In the limit of infinitely many particles, $N\to\infty$, and under a local equilibrium assumption, one obtains as hydrodynamic
  limit~\eqref{eq:80-L} without the noise term and with the limit of $j_{2,\ep}$ being
  $j_2=\nabla \frac{\delta F(\rho)}{\delta \rho}$, for a suitable $F$. A justification of this can be found in the analysis of the
  Vlasov-Fokker-Planck equation, see for example~\cite{Monmarche2017a, Duong2013a}. In contrast to our setting, the
  Vlasov-Fokker-Planck equation is derived by relying on the empirical density defined on the combined position--momentum state
  space, $\tilde{\rho}_N(x,y,t)=N^{-1}\sum_{i=1}^{N}{\delta(x-q_i(t),y-p_i(t))}$. In this work, we only use the position-dependent
  quantities~\eqref{eq:102}--\eqref{eq:120}, as this results in a more reduced model with half the spatial dimension (i.e.,
  position as only space variable).  In addition, we do not perform the aforementioned hydrodynamic limit, but then have to close
  the processes $j_{2,\ep}$ (for fixed $N$) using an approximation in the context of a low temperature regime for the underlying
  Langevin dynamics, see Subsection~\ref{ss:200}.
\end{rem}

Subsection~\ref{sec:13} is devoted to adapting the proofs of Proposition~\ref{p:1}, Proposition~\ref{p:22}, and
Theorem~\ref{thm:1} under Assumption~(NG) instead of Assumption~(G).  Finally, in Subsection~\ref{ss:200} we give suitable
approximations of the components of~\eqref{eq:80} in order to obtain expressions closed in $\rho_{\ep}$, $j_{\ep}$, $V$.

\emph{Mild solutions to the regularised Dean--Kawasaki model in a periodic setting:} In Section~\ref{s:5}, we build on the
contents of Subsection~\ref{ss:200}. We work on a periodic domain, in the case of a large number of particles $N$. We define the
\emph{regularised Dean--Kawasaki model}
\begin{subequations}
  \label{eq:420}
  \begin{empheq}[left={}\empheqlbrace]{align}
    &\,\,  \displaystyle\frac{\partial \rho_{\ep}}{\partial t}(x,t)  = -\frac{\partial j_{\ep}}{\partial x}(x,t), 
    \qquad x\in D=[0,2\pi],\,\,t\in[0,T],\label{eq:420a} \vspace{0.7 pc}\\
    &\,\, \displaystyle\frac{\partial j_{\ep}}{\partial t}(x,t)  
    = -\gamma j_{\ep}(x,t)-\left(\frac{\sigma^2}{2\gamma}\right)\frac{\partial \rho_{\ep}}{\partial x}(x,t)-
    V_{\per}'(x)\rho_{\ep}(x,t)
    +\frac{\sigma}{\sqrt{N}}\sqrt{\rho_{\ep}(x,t)}\,\tilde{\xi}_{\per,\ep}, \label{eq:420b}\vspace{0.6 pc}\\
    &\,\,   \displaystyle\rho_{\ep}(x,0)=\rho_0(x),\quad j_{\ep}(x,0)=j_0(x).\nonumber
  \end{empheq}
\end{subequations}
Note that in addition to the approximations made in Subsection~\ref{ss:200}, we have also replaced $\tilde{\xi}_{\ep}$ and $V$
with $\tilde{\xi}_{\per,\ep}$ and $V_{\per}$, the latter two being $2\pi$-periodic versions of the former. This is a natural choice
for the analysis of the equations on a periodic domain. 

\begin{rem}
  Equation~\eqref{eq:420} is a stochastic wave equation. Yet, standard well-posedness results for stochastic partial equations
  cannot be applied in a straightforward way. Firstly, unlike the stochastic heat equation with non-Lipschitz noise
  coefficient~\cite{Shiga1994a}, equation~\eqref{eq:420} does not have a sufficiently regular Green function associated with its
  linear drift operator. This results in standard semigroup techniques not being able to provide well-posedness results
  for~\eqref{eq:420}, due to the presence of the non-Lipschitz noise in~\eqref{eq:420b}. Secondly, the theory of rough paths and
  paracontrolled distributions appears to be inapplicable, again due to the non-Lipschitz noise. Finally, the very nature of the
  wave equation does not seem to prevent $\rho$ from becoming negative (e.g., a suitable maximum principle appears to be
  unavailable), thus it unclear whether the noise is well-defined.
\end{rem}

We prove various preliminary results associated with the existence theory
for~\eqref{eq:420}. These include the semigroup analysis associated with the deterministic integrand
of~\eqref{eq:420} 
in Subsection~\ref{ss:10}, a discussion on the choice of a spatially periodic noise in Subsection~\ref{ss:40}, the analysis of the
stochastic integrand of~\eqref{eq:420} in Subsection~\ref{ss:81}, preliminary existence and uniqueness results in
Subsection~\ref{ss:50}, and \emph{a priori} estimates in Subsections~\ref{ss:51} and~\ref{ss:52}. Our key result, provided in
Subsection~\ref{ss:100}, is the following.

\begin{theorem}[High-probability existence and uniqueness result]
  \label{thm:100}
  Let $D=[0,2\pi]$. Let $X_0=(\rho_0,j_0)\in H^1_{\per}(D)\times H^1_{\per}(D)$ be a deterministic initial condition, where
  $H^1_{\per}(D)$ denotes $2\pi$-periodic functions in $H^1(D)$. Assume that $\rho_0(x)\geq \eta$,$\mbox{ for all } x\in D$, for
  some $\eta>0$. Let the scaling $N\ep^{\theta}\geq 1$ be satisfied for some $\theta>7$, and let $\nu\in(0,1)$. It is possible to
  choose a sufficiently large number of particles $N$ such that there exists a unique $H^1_{\per}(D)\times H^1_{\per}(D)$-valued
  mild solution $X_{\ep}=(\rho_{\ep},j_{\ep})$ satisfying equation~\eqref{eq:420} up to a time $T=T(X_0)$ on a set
  $F_{\nu}\in\mathcal{F}$ such that $\mathbb{P}(F_{\nu})\geq 1-\nu$. That is to say, the \emph{regularised Dean--Kawasaki
    model}~\eqref{eq:420} is satisfied path-wise by a unique process $X_{\ep}$ on a set of probability at least $1-\nu$.
\end{theorem}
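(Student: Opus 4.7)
The plan is to establish existence and uniqueness by comparing the stochastic solution against a well-behaved deterministic reference, and then localising to a high-probability event on which the non-Lipschitz singularity of $\sqrt{\rho_\ep}$ is never activated, in the spirit sketched in the introduction.

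First, I would solve the deterministic system obtained by deleting the noise term in \eqref{eq:420b}: write $X^{\text{det}}(t)=(\rho^{\text{det}}(t),j^{\text{det}}(t))$ for its unique mild solution in $H^1_{\per}(D)\times H^1_{\per}(D)$, using the semigroup $S(t)$ built in Subsection~\ref{ss:10}. Because $H^1_{\per}(D)\hookrightarrow C(D)$ in one spatial dimension and $\rho_0\geq\eta$, the time continuity of $\rho^{\text{det}}$ produces a deterministic horizon $T=T(X_0)>0$ on which $\rho^{\text{det}}(x,t)\geq \eta/2$ uniformly in $x$. I would then truncate: pick a smooth globally Lipschitz $\phi\colon\mathbb{R}\to\mathbb{R}$ with $\phi(r)=\sqrt{r}$ for $r\geq\eta/4$, and replace $\sqrt{\rho_\ep}$ by $\phi(\rho_\ep)$ in \eqref{eq:420b}. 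The contraction-mapping machinery of Subsection~\ref{ss:50}, combined with the noise-integrand analysis of Subsection~\ref{ss:81}, then yields a unique global mild solution $X^{\text{tr}}_\ep=(\rho^{\text{tr}}_\ep,j^{\text{tr}}_\ep)$ to the truncated system in $L^2(\Omega;C([0,T];H^1_{\per}\times H^1_{\per}))$.

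The core estimate is a comparison between $X^{\text{tr}}_\ep$ and $X^{\text{det}}$ on $[0,T]$. Subtracting the two mild formulations, the drift differences are handled by the Lipschitz properties of $\phi$ and $V'_{\per}$ together with a Gronwall argument; the remaining term is the stochastic convolution $\int_0^t S(t-s)\Sigma_\ep(\rho^{\text{tr}}_\ep(s))\,dW(s)$, whose $H^1_{\per}\times H^1_{\per}$-moments are controlled via Burkholder--Davis--Gundy and the Hilbert--Schmidt norm of $\Sigma_\ep$ inherited from $Q_{\sqrt{2}\ep}^{1/2}$. Leveraging the prefactor $\sigma/\sqrt{N}$ and the \emph{a priori} bounds of Subsections~\ref{ss:51}--\ref{ss:52}, the scaling $N\ep^{\theta}\geq 1$ with $\theta>7$ yields an estimate of the form
\begin{equation*}
\mathbb{E}\,\bigl\|X^{\text{tr}}_\ep-X^{\text{det}}\bigr\|_{C([0,T];H^1_{\per}\times H^1_{\per})}^{2}\;\leq\; C(T,X_0)\,N^{-1}\ep^{-(7-\delta)}\;\longrightarrow\;0\quad\text{as }N\to\infty,
\end{equation*}
for some $\delta=\delta(\theta)>0$. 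Through $H^1_{\per}(D)\hookrightarrow C(D)$ this converts to a uniform-in-$x$ control of $\rho^{\text{tr}}_\ep-\rho^{\text{det}}$.

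Finally, define $F_\nu:=\{\|\rho^{\text{tr}}_\ep-\rho^{\text{det}}\|_{C([0,T]\times D)}<\eta/4\}$. Chebyshev's inequality applied to the above moment bound gives $\mathbb{P}(F_\nu)\geq 1-\nu$ once $N$ is chosen large enough depending on $\nu$, $\eta$, $T$. On $F_\nu$ we have $\rho^{\text{tr}}_\ep\geq\eta/4$, so $\phi(\rho^{\text{tr}}_\ep)=\sqrt{\rho^{\text{tr}}_\ep}$, and $X^{\text{tr}}_\ep$ is a mild solution of the original equation \eqref{eq:420} pathwise on $F_\nu$. For uniqueness I would use the standard localisation argument: any other $H^1_{\per}\times H^1_{\per}$-valued mild solution of \eqref{eq:420} on $F_\nu$ stays in the region where the truncation is inactive, so it must also solve the truncated equation, and hence agree with $X^{\text{tr}}_\ep$ by the uniqueness of the Lipschitz-truncated problem. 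The main obstacle I anticipate is the quantitative $\ep$-dependence of the stochastic-convolution estimate in the $H^1$-norm: one needs not merely the operator but the Hilbert--Schmidt norm of $S(t-s)\Sigma_\ep$, and extracting exactly the threshold $\theta>7$ requires carefully tracking how the derivatives of $w_\ep$ (via $Q_{\sqrt{2}\ep}^{1/2}$) interact with $H^1_{\per}$ regularity of the wave-type semigroup, which is non-smoothing.
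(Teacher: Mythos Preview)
Your approach is essentially the paper's: regularise the square root by a smooth Lipschitz $\phi$ (the paper writes $h_\delta$), solve the regularised problem, compare to the deterministic solution $Z$ via a Gronwall/small-noise estimate, then use Chebyshev and the embedding $H^1_{\per}\hookrightarrow C^0_{\per}$ to land on a high-probability set where $\rho^{\mathrm{tr}}_\ep\geq\delta$ and the truncation is inactive.

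Two technical points deserve attention. First, your comparison estimate is stated for the second moment of the uniform-in-time norm, $\mathbb{E}\,\|X^{\mathrm{tr}}_\ep-X^{\mathrm{det}}\|^2_{C([0,T];\mathcal{W})}$. The paper works with an exponent $q>2$ and stresses that this is not cosmetic: the maximal inequality for the stochastic convolution it uses (Da~Prato--Zabczyk, Proposition~7.3, via factorisation) requires $q>2$; with $q=2$ one only gets $\sup_t\mathbb{E}[\,\cdot\,]$, not $\mathbb{E}[\sup_t\,\cdot\,]$, which is what Chebyshev needs. So either switch to $q>2$ or supply a separate maximal inequality valid at $q=2$ for contraction semigroups. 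Second, even with a globally Lipschitz, $C^2$ truncation $\phi$, the map $(\rho,j)\mapsto \phi(\rho)\cdot b$ is only \emph{locally} Lipschitz from $\mathcal{W}$ into $L^0_2(\mathcal{W})$: the $H^1$-norm of $\phi(\rho)b$ produces a term $\phi'(\rho)\rho'\,b$, and the Lipschitz bound picks up a factor $\|\rho'\|_{L^2}$. Hence a plain contraction-mapping argument does not close directly; the paper invokes a local-Lipschitz/linear-growth existence theorem (Tappe) instead, and uses the resulting c\`adl\`ag regularity to justify the $q>2$ moment bounds. Your uniqueness sketch is fine once phrased with a stopping time: two solutions of \eqref{eq:420} agree up to the first time either leaves $\{\rho\geq\eta/4\}$, and on $F_\nu$ the truncated solution never leaves, forcing agreement throughout $[0,T]$.
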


For the reader's convenience, we summarise how we addressed the three difficulties of the original Dean--Kawasaki model. Firstly,
we work in a function setting, thus the noise $\mathcal{\dot Y}_N$ is well-defined. Secondly, we do not combine the differential
equations associated with $\rho_{\ep}$~\eqref{eq:420a} and $j_{\ep}$~\eqref{eq:420b}, in contrast with~\cite{Lutsko2012a}. On the
contrary, we solve system~\eqref{eq:420} for the couple $(\rho_{\ep},j_{\ep})$, thus avoiding the formal application of the
divergence operator for the stochastic noise of~\eqref{eq:80}. Finally, we prove the above-mentioned high-probability existence
and uniqueness result for~\eqref{eq:420}.

The existence result of this paper is restricted to one spatial dimensional, $d=1$. This restriction comes from Sobolev
embeddings, as we point out in Section~\ref{s:5}.

Finally, Appendix~\ref{ap:1} contains basic facts about Gaussian random variables, while Appendix~\ref{sec:4} contains technical
auxiliary results that are repeatedly used for the derivation of the regularised Dean--Kawasaki model carried out in
Section~\ref{s:4}.

\begin{rem}
  The assumptions of our main results (i.e., Proposition~\ref{p:1} and \ref{p:22}, and Theorems~\ref{thm:1} and~\ref{thm:100}) are
  concerned with different scalings for the regularisation in $\ep$, namely $N\ep^{\theta}=1$ for some $\theta$, see
  Figure~\ref{fig:1}. The lower the value of $\theta$, the more general and less demanding the regularisation is. We motivate
  these scalings from the specific function spaces which are involved in the proofs of the aforementioned results. In this work,
  we do not fully analyse the optimality of such scalings (i.e., the indentification of the lowest admissible value of
  $\theta$). We limit ourselves to providing general comments on this matter in Remark~\ref{rem:101}.
 \end{rem}


\section{Basic notation and assumptions}
\label{s:2}

\subsection{Basic Notation}
\label{sec:3}

We may use the same notation for different constants, even within the same line of computation. The dependence of a constant on
given parameters will be highlighted only when it is relevant. We use the symbol $\|\cdot\|$ to denote the norm in
$\mathbb{R}^d$. We use the symbol $\langle\cdot,\cdot\rangle$ to refer to the standard inner product in $\mathbb{R}^d$. For
$x\in\mathbb{R}$, we define $\langle x\rangle :=\sqrt{1+x^2}$.  The symbol $\mathbb{E}\left[X\right]$ denotes the expectation of a
$\mathbb{R}^d$-valued random variable $X$ defined on the probability space $(\Omega,\mathcal{F},\mathbb{P})$. For two
$\mathbb{R}^d$-valued random variables $X,Y$, we denote the covariance matrix (respectively, correlation matrix) of $X$ and $Y$ by
$\mbox{Cov}(X,Y)$ (respectively, $\mbox{Corr}(X,Y)$). For a real-valued random variable $X$, we abbreviate
$\mbox{Var}(X):=\mbox{Cov}(X,X)$.  We will use the symbol $\sim$ to indicate equivalence of laws for random variables. In
particular, we write $X\sim\mathcal{N}(\mu,\sigma^2)$ for a Gaussian random variable $X$ of mean $\mu$ and variance $\sigma^2$. We
write $\mathcal{G}(y,\mu,\sigma^2)$ to denote the probability distribution function of $X\sim\mathcal{N}(\mu,\sigma^2)$, namely
$\mathcal{G}(y,\mu,\sigma^2):=(2\pi\sigma^2)^{-1/2}\exp\left\{-(y-\mu)^2/(2\sigma^2)\right\}$. Quite often, we will use the
short-hand notation $w_{\epsilon}(y):=\mathcal{G}(y,0,\epsilon^2)$, for $\ep>0$. For $X\sim\mathcal{N}(\mu,\sigma^2)$, we define
its \emph{absolute} moments $M(n,\mu,\sigma^2):=\mathbb{E}\left[|X|^n\right]$ and \emph{plain} moments
$m(n,\mu,\sigma^2):=\mathbb{E}\left[X^n\right]$, for any $n\in\mathbb{N}\cup\{0\}$.  For a vector $\mu\in\mathbb{R}^d$ and a
symmetric semi-positive definite matrix $\Sigma\in\mathbb{R}^{d\times d}$, we write $X\sim\mathcal{N}(\mu,\Sigma)$ to denote an
$\mathbb{R}^d$-valued Gaussian random vector with mean $\mu$ and covariance matrix $\Sigma$. For a domain $A\subset\mathbb{R}$, we
use the standard notation $L^p(A)$ and $H^n(A)$ (for $p\in[1,\infty]$ and $n\in\mathbb{N}$) to denote the $L^p$-spaces on $A$ and
the Sobolev spaces of functions on $A$ with square integrable weak derivatives up to order $n$. We denote $n$ times continuously
differentiable functions on $A$ by $C^n(A)$ (for $n\in\mathbb{N}\cup\{\infty\}\cup\{0\}$).

\subsection{Assumptions on the Langevin dynamics}
\label{sec:2} 

We consider the following two different sets of assumptions associated with the Langevin dynamics~\eqref{eq:24}, and in particular
with the choice of potential $V$.

\begin{assumg*}[Gaussian setting for vanishing potential $V$]
  \label{ass:1} Let $T>0$. The potential $V$ vanishes, $V \equiv 0$. Moreover, the initial condition $(q_0,p_0)$ to~\eqref{eq:24}
  is such that the solution $(q(t),p(t))$ to~\eqref{eq:24} satisfies
  \begin{enumerate}
  \item \label{it:G1} $(q(t),p(t))\mbox{ is a bivariate Gaussian vector}$, for all $t\in[0,T]$.
  \item \label{it:G2} There exist $\iota>\nu>0$ such that $\nu\leq\mbox{\emph{Var}}[q(t)]\leq \iota$, for all $t\in[0,T]$.
  \item \label{it:G3} The following quantities are Lipschitz on $[0,T]$: the expected values $\mu_q(t):=\mean{q(t)}$ and
    $\mu_p(t):=\mean{p(t)}$, the variances $\sigma^2_q(t):=\emph{Var}[q(t)]$ and $\sigma^2_p(t):=\emph{Var}[p(t)]$, and the
    correlation $\chi(t):=\emph{Corr}(q(t),p(t))$.
  \end{enumerate}
\end{assumg*}
This assumption holds generically for the Ornstein-Uhlenbeck process dynamics, see Lemma~\ref{la:6}.

\begin{assumng*}[Non-Gaussian setting for rapidly diverging $V(q)\approx |q|^{2n}$] 
  \begin{enumerate}
  \item \label{it:NG1} The potential $V$ is a $C^{\infty}(\mathbb{R})$-function. Furthermore, there exists $n\in\mathbb{N}$ such
    that, for all $k\in\mathbb{N}$, there exists a constant $C_k$ such that
    \begin{align*}
      \left|\frac{\partial^k V(q)}{\partial q^k}\right|\leq C_k\left(1+\langle q \rangle^{2n-\min\{2,k\}}\right),
      \qquad \mbox{ for all } q\in\mathbb{R}.
    \end{align*}
  \item \label{it:NG2} There exist two constants $C_0(V), C_1(V)>0$ such that
    \begin{align*}
      V(q)\geq C^{-1}_0\langle q\rangle^{2n}-C_0,\qquad \left|\frac{\partial V(q)}{\partial q}\right|
      \geq C^{-1}_1\langle q\rangle^{2n-1}-C_1,\qquad \mbox{ for all } q\in\mathbb{R}.
    \end{align*}
  \item \label{it:NG3} The joint density $g_0$ of the initial condition $(q_0,p_0)$ to~\eqref{eq:24} coincides with
    $\overline{g}(\overline{t},q,p)$, where $\overline{t}$ is some positive time and $\overline{g}(\overline{t},q,p)$ is the
    solution at time $\overline{t}$ to the Fokker--Planck equation
    \begin{align}
      \label{eq:134}
      \frac{\partial g}{\partial t}=-\nabla\cdot(g\mu)+\frac{\sigma^2}{2}\frac{\partial^2 g}{\partial p^2},
      \qquad \mu:=\left(p,-\gamma p-V'(q)\right),\qquad g(0,q,p)=\overline{g}_0(q,p),
    \end{align}
    started from some initial condition $\overline{g}_0\in M^{1/2}H^{-5,-5}(\mathbb{R}^2)$. The notation $H^{s,s}(\mathbb{R}^2)$,
    $s>0$, denotes the $s^{th}$-order member of the isotropic Sobolev chain defined in \emph{\cite[Eq.~(3)]{Herau2004a}}, while
    the weight function $M(q,p)\propto\exp\left\{-(2\gamma/\sigma^2)\left(p^2/2+V(q)\right)\right\}$ is the Gibbs invariant
    measure of~\eqref{eq:134}.
  \item \label{it:NG4} We have that $\lim_{q\rightarrow+\infty}{V(q)/V(-q)}$ exists and is finite.
  \end{enumerate}
\end{assumng*}

Items~\ref{it:NG1} and~~\ref{it:NG2} of the Assumption~(NG) are slightly more restrictive than those of~\cite[Hypotheses
1]{Herau2004a}. In particular, we assume the potential $V$ to diverge at infinity with no less than quadratic growth. This is
encapsulated in the requirement $n\geq1$ (instead of the requirement $n>1/2$ made in~\cite[Hypotheses 1]{Herau2004a}). Item~\ref{it:NG3} implies regularity of the initial condition $g_0$.

We briefly justify the choice of the above two sets of hypotheses as follows. Assumption~(G) guarantees the applicability of tools
inherently associated with the theory of Gaussian random variables. Then many computations can be made explicit in a relatively
straightforward way. On the other hand, Assumption~(NG) is more general. Our analysis under Assumption~(NG) is an extension of the
argument previously carried out under Assumption~(G). Both these assumptions will play a role in the derivation of the regularised
Dean--Kawasaki model in Section~\ref{s:4}.


\section{Derivation of the regularised Dean--Kawasaki model}
\label{s:4}

We now derive the \emph{regularised Dean--Kawasaki} model studied in this paper. In Subsection~\ref{sec:6}, under Assumption~(G),
we prove a tightness result for the relevant quantities~\eqref{eq:102},~\eqref{eq:120}, as well as uniqueness of the limit for the
family $\{\rho_{\ep}\}_{\ep}$. These results are Propositions~\ref{p:1} and~\ref{p:22}. The proof of Proposition~\ref{p:1} is
nontrivial but also technical, and might be skipped at a first reading. Subsection~\ref{sec:7} motivates the derivation of the
noise $\mathcal{\dot Y}_N$, which we introduced in~\eqref{eq:3002}. In Subsection~\ref{sec:8}, under Assumption~(G), we prove
Theorem~\ref{thm:1}, which quantifies the difference between the noises $\mathcal{\dot Y}_N$ and $\mathcal{\dot Z}_N$ (see
also~\eqref{eq:80}). In Subsection~\ref{sec:13} we adapt the proofs of Propositions~\ref{p:1},~\ref{p:22}, and Theorem~\ref{thm:1}
under Assumption~(NG). Finally, Subsection~\ref{ss:200} gathers the relevant information from the earlier parts of
Section~\ref{s:4} in order to define a regularised Dean--Kawasaki model.

\subsection{Tightness of leading quantities: proofs of Proposition~\ref{p:1} and Proposition~\ref{p:22}}
\label{sec:6}

We prove some Kolmogorov-type tightness estimates for the families $\{\rho_\ep\}_{\ep}$, $\{j_\ep\}_{\ep}$ and
$\{j_{2,\ep}\}_{\ep}$. The arguments are somewhat technical; as we are not aware of closely related results in the literature, we
describe the proofs in some detail.

\begin{proof}[Proof of Proposition~\ref{p:1} under Assumption~(G)]
  We verify the assumption of ~\cite[Corollary 14.9]{Kallenberg2002a} for the families $\{\rho_\ep\}_{\ep}$, $\{j_\ep\}_{\ep}$,
  $\{j_{2,\ep}\}_{\ep}$. More specifically, for each family, we prove a suitable Kolmogorov time-regularity condition, as well as
  tightness of the processes at time 0.

\emph{Step 1: Tightness of $\{\rho_\ep\}_{\ep}$}.
We use the expansion of a square and the independence of the particles to write
\begin{align*}
  & \mean{\| \rho_\epsilon(\cdot,t)-\rho_\epsilon(\cdot,s)\|_{L^2(\mathbb{R})}^2} =  
    \frac{1}{N^2}\mean{\!\int_{\mathbb{R}}{\!\sum_{i,j=1}^{N}{\!\left[w_\epsilon(x-q_i(t))
    -w_\epsilon(x-q_i(s))\right]\left[w_\epsilon(x-q_j(t))-w_\epsilon(x-q_j(s))\right]\m x}}}\nonumber\\
  & =  \frac{1}{N^2}\sum_{i=1}^{N}{\mean{\left\|w_\epsilon(\cdot-q_1(t))-w_\epsilon(\cdot-q_1(s))\right\|^2_{L^2(\mathbb{R})}}} \nonumber\\
  & \quad +\frac{1}{N^2}\sum_{i\neq j}{\int_{\mathbb{R}}{\mean{w_\epsilon(x-q_i(t))-w_\epsilon(x-q_i(s))}
    \mean{w_\epsilon(x-q_j(t))-w_\epsilon(x-q_j(s))}\m x}}.
\end{align*}

Given the identical distribution of the particles, we deduce 
\begin{align}
  \label{eq:3000}
  & \mean{\| \rho_\epsilon(\cdot,t)-\rho_\epsilon(\cdot,s)\|_{L^2(\mathbb{R})}^2} \nonumber\\
  & \quad =  \frac{1}{N}\mean{\left\|w_\epsilon(\cdot-q_1(t))-w_\epsilon(\cdot-q_1(s))\right\|^2_{L^2(\mathbb{R})}}
    +\frac{1}{N^2}\sum_{i\neq j}\left\|\mean{w_\epsilon(\cdot-q_1(t))-w_\epsilon(\cdot-q_1(s))}\right\|_{L^2(\mathbb{R})}^2\nonumber\\
  & \quad \leq  \frac{1}{N}\underbrace{\mean{\left\|w_\epsilon(\cdot-q_1(t))-w_\epsilon(\cdot-q_1(s))\right\|^2_{L^2(\mathbb{R})}}}_{=: I_1}
    +\underbrace{\left\|\mean{w_\epsilon(\cdot-q_1(t))-w_\epsilon(\cdot-q_1(s))}\right\|_{L^2(\mathbb{R})}^2}_{=: \mathsf{ct}}.
\end{align}
There are two main differences between the term $I_1$ and the ``cross-term'' contribution $\mathsf{ct}$. Firstly, term $I_1$ is of
the form $\mean{\|\cdot\|^p_{L^p(\mathbb{R})}}$, while term $\mathsf{ct}$ is of the form
$\|\mean{\cdot}\|^p_{L^{p}(\mathbb{R})}$. Secondly, term $\mathsf{ct}$ has no decaying scaling factor in $N$. This means that we
are forced to provide a bound for $\mathsf{ct}$ which is \emph{independent} of $\ep$. This bound is provided by invoking
Lemmas~\ref{lem:2} and~\ref{lem:1}. On the other hand, we are allowed to bound $I_1$ with quantities which might diverge in $\ep$
(these appear because of the form $\mean{\|\cdot\|^p_{L^p(\mathbb{R})}}$, as we will point out), as long as they can be
compensated by the scaling in $N$. These considerations are quite general, and we will apply similar reasonings at several points
later on in the proof, as well as point out the relevant analogies when needed.

We occasionally drop the particle index, because of the identical distribution. We proceed to bound $I_1$ and $\mathsf{ct}$. Using
the elementary inequality
\begin{align}
  \label{eq:6}
  1-e^{-x^2}\leq x^2,\quad\mbox{ for all } x\in\mathbb{R},
\end{align}
we rewrite $I_1$ as 
\begin{align}
  \label{eq:12}
  &\mean{\left\|w_\epsilon(\cdot-q(t))-w_\epsilon(\cdot-q(s))\right\|_{L^2(\mathbb{R})}^2} =
    \mean{\int_{\mathbb{R}}{w^2_{\ep}(x-q(t))+w^2_{\ep}(x-q(s))-2w_{\ep}(x-q(t))w_{\ep}(x-q(s))}}\nonumber\\
  & = \frac{1}{\sqrt{\pi\epsilon^2}}\,\mean{1-\exp\left(\frac{-(q(t)-q(s))^2}{4\epsilon^2}\right)}
    \leq  \frac{C}{\epsilon^3}\mathbb{E}\left[|q(t)-q(s)|^2\right]
    \leq\frac{C}{\epsilon^3}|t-s|^2,
\end{align}
where we have used Lemma~\ref{la:4} and an integration in $x$ in the last equality, and~\eqref{eq:6} in the first inequality. In
addition, $q$ satisfies, by definition, the integral equation $q(t)-q(s)=\int_{s}^{t}{p(z)\m z}$. The integrability properties of
$p$ (Assumption~(G)) and the H\"older inequality hence give the final inequality in~\eqref{eq:12}.  As for the cross-terms
$\mathsf{ct}$, we employ Lemma~\ref{lem:2}, estimate~\eqref{eq:4}, and then apply Lemma~\ref{lem:1} to deduce
\begin{align*}
  \norm{\mean{w_\epsilon(\cdot-q(t))-w_\epsilon(\cdot-q(s))}}_{L^2(\mathbb{R})}^2 
  =\int_{\mathbb{R}}{\left|\mathcal{G}(x,\mu(t),\sigma^2_q(t)+\epsilon^2)-\mathcal{G}(x,\mu(s),
  \sigma^2_q(s)+\epsilon^2)\right|^2\!\m x} \leq  C|t-s|^{2}.
\end{align*}
We combine the estimates for $\mathsf{ct}$ and $I_1$ and obtain, thanks to the prescribed scaling $N\ep^3\geq 1$,
\begin{align*}
  \mean{\|\rho_\epsilon(\cdot,t)-\rho_\epsilon(\cdot,s)\|_{L^2(\mathbb{R})}^2} 
  \leq C\left(\frac{1}{N\epsilon^3}+1\right)|t-s|^2\leq C|t-s|^2, 
\end{align*}
and the time regularity is settled using Kolmogorov's continuity theorem.  We now need to show that
$\{\rho_{\ep}(\cdot,0)\}_{\ep}$ is tight in $L^2(D)$. We rely on the compact embedding $H^1(D)\subset L^2(D)$, see~\cite[Theorem
6.3]{Adams2003a}, and we show that $\mean{\|\rho_{\ep}(\cdot,0)\|^2_{H^1(\mathbb{R})}}$ is uniformly bounded in $\ep$. A
computation analogous to~\eqref{eq:3000} gives
\begin{align}
  \label{eq:2002}
  &\mean{\|\rho_{\ep}(\cdot,0)\|^2_{H^1(\mathbb{R})}} = \mean{\|\rho_{\ep}(\cdot,0)\|^2_{L^2(\mathbb{R})}}+\mean{\left\|\frac{\partial \rho_{\ep}}{\partial x}(\cdot,0)\right\|
    ^2_{L^2(\mathbb{R})}} \nonumber\\
  & \quad \leq \frac{1}{N}\underbrace{\mean{\int_{\mathbb{R}}{w^2_{\ep}(x-q_1(0))\m x
    +w'^2_{\ep}(x-q_1(0))\m x}}}_{=:I_1} +\underbrace{\int_{\mathbb{R}}{\mean{w_{\ep}(x-q_1(0))}^2
    +\mean{w'_{\ep}(x-q_1(0))}^2\m x.}}_{=:\mathsf{ct}}
\end{align}
The bound $I_1\leq C\ep^{-3}$ follows from Lemma~\ref{la:4}, in combination with the integration in $x$ and the definition of the
Gaussian moments, see Lemma~\ref{la:5}. The term $\mathsf{ct}$ can be bounded uniformly in $\ep$ using Lemma~\ref{lem:2},
estimates~\eqref{eq:4} and~\eqref{eq:130}. The scaling $N\ep^3\geq 1$ finally implies tightness for $\{\rho_{\ep}\}_{\ep}$.

\emph{Step 2: Tightness of $\{j_\ep\}_{\ep}$}. For notational convenience, we define
\begin{equation*}
  \tau_i(x,s,t):=p_i(t)w_\epsilon(x-q_i(t))-p_i(s)w_\epsilon(x-q_i(s)),
\end{equation*}
so that $j_{\ep}(x,t)-j_{\ep}(x,s)=N^{-1}\sum_{i=1}^{N}{\tau_i(x,s,t)}$. In the same fashion as~\eqref{eq:3000}, we expand
\begin{align}
  \label{eq:11}
  & \mean{\norm{ j_\epsilon(\cdot,t)-j_\epsilon(\cdot,s)}_{L^4(\mathbb{R})}^4} \leq 
    \frac{1}{N^3}\underbrace{\int_{\mathbb{R}}{\mean{\tau_1(x,s,t)^4}
    \m x}}_{=:I_1}+\frac{C}{N^2}\underbrace{\int_{\mathbb{R}}{\mean{\left|\tau_1(x,s,t)\right|}\mean{\left|\tau^3_1(x,s,t)\right|}
    \m x}}_{=:I_2}\nonumber\\
  &\quad +  \frac{C}{N^2}\underbrace{\int_{\mathbb{R}}{\mean{\tau^2_1(x,s,t)}^2}\m x}_{=:I_3} + 
    \frac{C}{N}\underbrace{\int_{\mathbb{R}}{\mean{\tau_1(x,s,t)}^2\mean{\tau^2_1(x,s,t)}}\m x}_{=:I_4}
  +\underbrace{\int_{\mathbb{R}}{\mean{\tau_1(x,s,t)}^4}\m x}_{=:\mathsf{ct}}.
\end{align}
The discussion following~\eqref{eq:3000} applies analogously to the family of terms $I_1$, $I_2$, $I_3$ and $I_4$, which do
contain at least one term of the form $\mean{\tau_{i}(x,s,t)^p}$, and to the term $\mathsf{ct}$, which is of the form
$\|\mean{\cdot}\|^p_{L^{p}(\mathbb{R})}$. We thus provide an $\ep$-independent bound for $\mathsf{ct}$, and suitable
$\ep$-diverging bounds for $I_1$, $I_2$, $I_3$ and $I_4$.

The conditional density for bivariate Gaussian random variables, stated in Lemma~\ref{la:3}, implies
\begin{equation}
  \label{eq:15}
  f_{p(t)|q(t)}(p|q(t)=b)=\mathcal{G}\left(p,\mu_p(t)+\frac{\sigma_p(t)}{\sigma_q(t)}\chi(t)(b-\mu_q(t)),(1-\chi(t)^2)\sigma^2_p(t)\right),
  \qquad \mbox{for all }b\in\mathbb{R}.
\end{equation}
We use the law of total expectation and~\eqref{eq:15} 
to compute
\begin{align}
  \label{eq:5}
  \mean{p(t)w_{\epsilon}(x-q(t))} 
  & = \mean{\mean{p(t)w_{\epsilon}(x-q(t))|q(t)}} =  
    \mean{w_{\epsilon}(x-q(t))\left(\mu_p(t)+\frac{\sigma_p(t)}{\sigma_q(t)}\chi(t)(q(t)-\mu_q(t))\right)}
    \nonumber\\
  & =  a_1(t)\mean{w_{\epsilon}(x-q(t))}+a_2(t)\mean{w_{\epsilon}(x-q(t))q(t)},
\end{align}
where we set
\begin{equation*}
  a_1(t):=\mu_p(t)-\frac{\sigma_p(t)}{\sigma_q(t)}\chi(t)\mu_q(t),\quad a_2(t):=\frac{\sigma_p(t)}{\sigma_q(t)}\chi(t).
\end{equation*}
The time-dependent coefficients $a_1$ and $a_2$ are Lipschitz, thanks to Assumption~(G). Keeping in mind Remark~\ref{rem:1}, we
use Lemma~\ref{lem:2}, estimate~\eqref{eq:4} and then Lemma~\ref{lem:1}.  We deduce 
\begin{align}
  \label{eq:7}
  \mathsf{ct}\leq C|t-s|^{1+\beta},
\end{align}
for some $\beta\in(0,1)$. 

We now treat the $\ep$-diverging terms $I_1$, $I_2$, $I_3$ and $I_4$ in~\eqref{eq:11}. By adding and subtracting the quantity
$2p(t)p(s)w_{\ep/\sqrt{2}}(x-(q(t)+q(s))/2)$, using~\eqref{eq:6}, and integrating in $x$, we obtain
\begin{align}
  \label{eq:1001}
  \int_{\mathbb{R}}{\mean{\tau^2_1(x,s,t)}\m x} 
  & =\frac{1}
    {\sqrt{4\pi \ep^2}}\mean{\int_{\mathbb{R}}{p^2(t)w_{\frac{\ep}{\sqrt{2}}}(x-q(t))
    +p^2(s)w_{\frac{\ep}{\sqrt{2}}}(x-q(s))}\m x}+\nonumber\\
  & \quad - \frac{1}{\sqrt{4\pi\ep^2}}\mean{\int_{\mathbb{R}}{2p(t)p(s)\exp\left\{-\frac{(q(t)-q(s))^2}{4\ep^2}\right\}
    w_{\frac{\ep}{\sqrt{2}}}\left(x-\frac{q(t)+q(s)}{2}\right)}\m x}\nonumber\\
  & = \frac{1}{\sqrt{4\pi\ep^2}}\mean{|p(t)-p(s)|^2}+\frac{1}{\sqrt{4\pi\ep^2}}\mean{2p(s)p(t)
    \left(1-\exp\left\{-\frac{(q(t)-q(s))^2}{4\ep^2}\right\}\right)}\nonumber\\
  & \leq \frac{1}{\sqrt{4\pi\ep^2}}\mean{|p(t)-p(s)|^2}+\frac{C}{\ep^3}\mean{2p(s)p(t)\left|q(t)-q(s)\right|^2}.
\end{align}
The first expectation in the last line of~\eqref{eq:1001} satisfies $\mean{|p(t)-p(s)|^2}\leq C|t-s|$. This is implied by the
It\^o isometry, which we invoke because $p$ satisfies, by definition, the stochastic integral equation
$p(t)-p(s)=\int_{s}^{t}{-\gamma p(z)\m z}+\sigma\int_{s}^{t}{\m \beta(z)}$. Note the difference in time regularity with the
previously discussed $\mean{|q(t)-q(s)|^2}$, see~\eqref{eq:12}. As for the second expectation in the last line of~\eqref{eq:1001},
we may use the H\"older inequality on the probability space to separate $p(s)p(t)$ from $\left|q(t)-q(s)\right|^2$. Using again
the integrability of $p$ granted by Assumption~(G) and the H\"older inequality in time for $q(t)-q(s)$, we deduce
\begin{align}
  \label{eq:8}
  \int_{\mathbb{R}}{\mean{\tau^2_1(x,s,t)}\m x}\leq \frac{C}{\ep}|t-s|+\frac{C}{\ep^3}|t-s|^{2}.
\end{align}

In addition, we have the bound ${\mean{\tau_1(x,s,t)}^2}\leq C|t-s|$, where $C$ is independent of $x$ and $\ep$. This can be
justified by relying on~\eqref{eq:5}, using the fact that right-hand-side of~\eqref{eq:4} (for $X$ being the process $q$) is
Lipschitz in time, with Lipschitz constant independent of $\ep$ and $x$, as explained in Remark~\ref{rem:1}. Hence,
using~\eqref{eq:8}, we deduce that
\begin{align*}
  I_4\leq \frac{C}{N\ep^3}|t-s|^2.
\end{align*}

We have completed the analysis for $I_4$, which is the term that requires the most care, due to the fact that it is paired with
the slowest decay in $N$ as coefficient. As for the other terms $I_1$, $I_2$ and $I_3$, we need not provide sharp bounds. By
repeatedly applying the H\"older inequality on the probability space $\Omega$, we deduce that $I_2$ and $I_3$ are bounded by
$I_1$. We therefore only need to provide an estimate for $I_1$ in order to conclude Step~(ii).  We write
\begin{align}
  \label{eq:13}
  I_1 
  \leq C\,\mean{\int_{\mathbb{R}}{(p(t)-p(s))^4w^4_{\epsilon}(x-q(t))}\m x} +  
  C\,\mean{\int_{\mathbb{R}}{p(s)^4(w_{\epsilon}(x-q(t))-w_{\epsilon}(x-q(s)))^4}\m x}.
\end{align}

We reuse some algebraic computations from~\eqref{eq:12} to continue as
\begin{align*}
  I_1& \leq  C\,\mean{\int_{\mathbb{R}}{(p(t)-p(s))^4w^4_{\epsilon}(x-q(t))}\m x} 
       + C\,\mean{\int_{\mathbb{R}}{p(s)^4(w_{\epsilon}(x-q(t))-w_{\epsilon}(x-q(s)))^4}\m x}\\
     & \leq  \frac{C}{\epsilon^4}\mean{(p(t)-p(s))^4} 
       + C\mean{p^4(s)\frac{C}{\epsilon^2}\int_{\mathbb{R}}{(w_{\epsilon}(x-q(t))-w_{\epsilon}(x-q(s)))^2}\m x}\\
     & \leq  \frac{C}{\epsilon^4}\mean{(p(t)-p(s))^4} 
       + \frac{C}{\epsilon^2}\mean{p^4(s)\frac{1}{\epsilon}\left(1-\exp\left(-\frac{(q(t)-q(s))^2}{4\epsilon^2}\right)\right)}\\
     & \leq  \frac{C}{\epsilon^4}\mean{(p(t)-p(s))^4} + \frac{C}{\epsilon^5}\mean{p^4(s)(q(t)-q(s))^2}\\
     & \leq \frac{C}{\epsilon^4}|t-s|^2+\frac{C}{\epsilon^5}\mean{p^8(s)}^{1/2}\mean{(q(t)-q(s))^4}^{1/2}\leq \frac{C}{\epsilon^5}|t-s|^2.
\end{align*}
In particular, we have used the bound $\max_{y}{w_{\ep}(y)}\leq C\ep^{-1}$ in the second inequality, Lemma~\ref{la:4} in the third
inequality,~\eqref{eq:6} in the fourth inequality, and integrability properties of $p$ and $q$ in the fifth and sixth inequality.
The scaling $N\ep^3\geq 1$ concludes the time regularity analysis for $\{j_{\ep}\}_{\ep}$. As for the tightness of
$\{j_{\ep}(\cdot,0)\}_{\ep}$, we deal with the analogous expression of~\eqref{eq:2002} for $\{j_{\ep}\}_{\ep}$. The analysis is
similar, apart from the use of Lemma~\ref{la:3} prior to the use of Lemma~\ref{lem:2} (for the corresponding term $\mathsf{ct}$)
and the use of the compact embedding $H^1(D)\subset L^4(D)$.

\emph{Step 3: Tightness of $\{j_{2,\ep}\}_{\ep}$}. For notational convenience, we define 
\begin{align*}
  \tau_i(x,s,t):=p^2_i(t)w'_\epsilon(x-q_i(t))-p^2_i(s)w'_\epsilon(x-q_i(s)),
\end{align*}
so that $j_{2,\ep}(x,t)-j_{2,\ep}(x,s)=N^{-1}\sum_{i=1}^{N}{\tau_i(x,s,t)}$. In the same fashion as~\eqref{eq:11}, we expand
\begin{align}
  \label{eq:14}
  & \mean{\norm{ j_{2,\epsilon}(\cdot,t)-j_{2,\epsilon}(\cdot,s)}_{L^4(\mathbb{R})}^4} 
    \leq \frac{1}{N^3}\underbrace{\int_{\mathbb{R}}{\mean{\tau_1(x,s,t)^4}}\m x}_{=:I_1}
    +\frac{C}{N^2}\underbrace{\int_{\mathbb{R}}{\mean{\left|\tau_1(x,s,t)\right|}
    \mean{\left|\tau^3_1(x,s,t)\right|}}\m x}_{=:I_2}\nonumber\\
  & \quad + \frac{C}{N^2}\underbrace{\int_{\mathbb{R}}{\mean{\tau^2_1(x,s,t)}^2}\m x}_{=:I_3} 
    + \frac{C}{N}\underbrace{\int_{\mathbb{R}}{\mean{\tau_1(x,s,t)}^2\mean{\tau^2_1(x,s,t)}}\m x}_{=:I_4}
  +  \underbrace{\int_{\mathbb{R}}{\mean{\tau_1(x,s,t)}^4}\m x}_{=:\mathsf{ct}}.
\end{align}
The considerations for $I_1$, $I_2$, $I_3$ and $I_4$ and $\mathsf{ct}$ are analogous to the ones for the homonymous counterparts
in~\eqref{eq:11}. In order to estimate $\mathsf{ct}$, we need to compute $\mean{p^2(t)w'_{\epsilon}(x-q(t))}$. We again rely on
the conditional law~\eqref{eq:15} and the law of total expectation to write
\begin{align}
  \label{eq:16}
  \mean{p^2(t)w'_{\epsilon}(x-q(t))} 
  & = \mean{\mean{p^2(t)w'_{\epsilon}(x-q(t))|q(t)}}\nonumber\\
  & = \mean{w'_{\epsilon}(x-q(t))\left\{(\mu_p(t)+\frac{\sigma_p(t)}{\sigma_q(t)}\chi(t)(q(t)-\mu_q(t)))^2
    +(1-\chi^2(t))\sigma^2_p(t)\right\}}.
\end{align}
The right-hand-side of~\eqref{eq:16}, thanks to Assumption~(G), Lemma~\ref{lem:2} and Remark~\ref{rem:1}, is of the form
prescribed by Lemma~\ref{lem:1}.  Hence we deduce 
\begin{align*}
  \mathsf{ct}\leq C|t-s|^{1+\beta},\qquad\mbox{for some }\beta>0.
\end{align*}

The analysis of terms $I_1$, $I_2$, $I_3$, $I_4$ in~\eqref{eq:14} is similar to the one we carried out for the homonymous terms
in~\eqref{eq:11}. We set $\tilde{q}:=(q(t)+q(s))/2$ and use Lemma~\ref{la:4} to compute
\begin{align*}
  \int_{\mathbb{R}}{\mean{\tau^2_1(x,s,t)}\m x} 
  & = \frac{1}{\sqrt{4\pi \ep^2}}\frac{1}{\ep^4}\left\{\mean{\int_{\mathbb{R}}{p^4(t)w_{\frac{\ep}{\sqrt{2}}}(x-q(t))(q(t)-x)^2
    +p^4(s)w_{\frac{\ep}{\sqrt{2}}}(x-q(s))(q(s)-x)^2}\m x}\right.+\nonumber\\
  & \quad -\left.2\mean{\int_{\mathbb{R}}{p^2(t)p^2(s)\exp\left\{-\frac{(q(t)-q(s))^2}{4\ep^2}\right\}
    w_{\frac{\ep}{\sqrt{2}}}\left(x-\tilde{q}\right)\underbrace{(q(t)-x)(q(s)-x)}_{=:T_1}}\m x}\right\}.
\end{align*}
We add and subtract $\tilde{q}$ in both brackets of $T_1$. Similarly to the argument in~\eqref{eq:1001}, we rely on the
$x$-integration with Gaussian kernels, the trivial bound $e^{z}\leq 1$ for $z\leq 0$, and we continue the above estimate
\begin{align}
  \label{eq:51} 
  \int_{\mathbb{R}}{\mean{\tau^2_1(x,s,t)}\m x} 
  &\leq\frac{C}{\ep^3}\mean{p^4(t)+p^4(s)-2p^2(t)p^2(s)+2p^2(t)p^2(s)
    \left(1-\exp\left\{-\frac{(q(t)-q(s))^2}{4\ep^2}\right\}\right)}\nonumber\\
  & \quad +\frac{C}{\ep^5}\mean{\int_{\mathbb{R}}{p^2(t)p^2(s)\exp\left\{-\frac{(q(t)-q(s))^2}{4\ep^2}\right\}
    \left|q(t)-q(s)\right|^2}\m x}\nonumber\\
  & \leq \frac{C}{\ep^3}\mean{\left|p^2(t)-p^2(s)\right|^2}+\frac{C}{\ep^5}\mean{p^2(t)p^2(s)|q(t)-q(s)|^2}.
\end{align}
Similarly to the argument for~\eqref{eq:1001}, we get 
\begin{align}
  \label{eq:10}
  \int_{\mathbb{R}}{\mean{\tau^2_1(x,s,t)}\m x} \leq \frac{C}{\ep^3}|t-s|+\frac{C}{\ep^5}|t-s|^2.
\end{align}

Using an identical argument to the proof concerning $\{j_{\ep}\}_{\ep}$, we have that ${\mean{\tau_1(x,s,t)}^2}\leq C|t-s|$, where
$C$ is independent of $x$ and $\ep$. In combination with~\eqref{eq:10}, this yields
\begin{align*}
  I_4\leq \frac{C}{\ep^5}|t-s|^2.
\end{align*}
By repeatedly applying the H\"older inequality on the probability space $\Omega$, we deduce that $I_2,I_3$ are bounded by
$I_1$. We therefore only need to provide an estimate for $I_1$ in order to conclude Step~(iii). We write
\begin{align}
  \label{eq:17}
  I_1 
  \leq C\,\mean{\int_{\mathbb{R}}{(p^2(t)-p^2(s))^4w'^4_{\epsilon}(x-q(t))}\m x}
  +  C\,\mean{\int_{\mathbb{R}}{p(s)^8(w'_{\epsilon}(x-q(t))-w'_{\epsilon}(x-q(s)))^4}\m x}. 
\end{align}
We notice that $\max_{y}\left|w'_\epsilon(y)\right|\leq C\epsilon^{-2}$. We rely on some computations in~\eqref{eq:51} and bound
$I_1$ as
\begin{align*}
  I_1& \leq  C\,\mean{\int_{\mathbb{R}}{(p^2(t)-p^2(s))^4w'^4_{\epsilon}(x-q(t))}\m x} 
       + C\,\mean{\int_{\mathbb{R}}{p(s)^8(w'_{\epsilon}(x-q(t))-w'_{\epsilon}(x-q(s)))^4}\m x}\\
     & \leq  \frac{C}{\epsilon^{8}}\mean{(p(t)-p(s))^4(p(t)+p(s))^4} 
       + \frac{C}{\epsilon^4}\,\mean{p^8(s)\int_{\mathbb{R}}{\left|w'_{\epsilon}(x-q(t))-w'_{\epsilon}(x-q(s))\right|^2}\m x}\\
     & \leq  \frac{C}{\epsilon^{8}}\mean{(p(t)-p(s))^4(p(t)+p(s))^4} + \frac{C}{\epsilon^4}\,\mean{\frac{C}{\ep^5}p^8(s)|q(t)-q(s)|^2} \\
     & \leq  \frac{C}{\epsilon^{8}}\mean{(p(t)-p(s))^8}^{1/2}\mean{(p(t)+p(s))^8}^{1/2} 
       + \frac{C}{\epsilon^9}\mean{p^{16}(s)}^{1/2}\mean{|q(t)-q(s)|^4}^{1/2} \leq \frac{C}{\epsilon^{9}}|t-s|^{1+\beta},
\end{align*}
where we have also used the Burkholder-Davis-Gundy inequality to estimate $\mean{(p(t)-p(s))^8}$. The required time regularity is
established. As for the tightness of $\{j_{2,\ep}(\cdot,0)\}_{\ep}$, we can deal with the analogous expression of~\eqref{eq:2002}
for $\{j_{2,\ep}\}_{\ep}$. The analysis is similar, apart from the use of Lemma~\ref{la:3} prior to the use of Lemma~\ref{lem:2}
(for the corresponding term $\mathsf{ct}$) and the use of the compact embedding $H^1(D)\subset L^4(D)$.
\end{proof}

\begin{rem}
  \label{rem:3]}
  The scaling $N^{-1}$ involved in the definitions of $\rho_{\ep}$ and $j_{\ep}$ is crucial for the tightness for
  $\{\rho_\ep\}_{\ep}$, $\{j_\ep\}_{\ep}$ and $\{j_{2,\ep}\}_{\ep}$. This scaling differs from the original Dean--Kawasaki
  derivation with non-rescaled leading quantities~\eqref{eq:2}.
\end{rem}

\begin{rem}
  \label{rem:2}
  The scaling (of $\ep$ and $N$) associated with the family $\{j_{2,\ep}\}_{\ep}$ is more restrictive than the one associated with
  the family $\{j_{\rho}\}_{\ep}$; this is due to the need to estimate quantities related to derivatives of the kernel
  $w_{\ep}$. The different hypotheses on $\theta$ are justified by the computations associated with term $I_1$ (in the case of
  $\{\rho_{\ep}\}_{\ep}$) and by the computations associated with term $I_4$ (in the case of $\{j_{\ep}\}_{\ep}$ and
  $\{j_{2,\ep}\}_{\ep}$).  The scalings of Proposition~\ref{p:1} are compatible with the assumptions of our key result,
  Theorem~\ref{thm:100}.
\end{rem}

\begin{proof}[Proof of Proposition~\ref{p:22} under Assumption~(G)] 
Prohorov's theorem~\cite[Theorem 14.3]{Kallenberg2002a} and Proposition~\ref{p:1} imply weak convergence up to subsequences for
the family $\{\eta_{\ep}\}_{\ep}$ in $\mathcal{X}$ as $\ep\rightarrow 0$. In order to conclude the proof, we need to prove
uniqueness of the weak limit $\eta$. Let us take two sequences $\{(a_{n}, N^a_n)\}_n$ and $\{(b_{n},N^b_n)\}_n$ satisfying the
scaling prescribed in the hypothesis, and such that $\eta_{a_n}\stackrel{w}{\rightarrow} \eta_1$ and
$\eta_{b_n}\stackrel{w}{\rightarrow} \eta_2$ in $\mathcal{X}$. In order to show that $\eta_1=\eta_2$, we just need to show that
the finite-dimensional laws coincide, see~\cite[Proposition 2.2]{Kallenberg2002a}. Let $\pi$ be a projection from $\mathcal{X}$
onto a finite but arbitrary number of times $0\leq t_1\leq\cdots\leq t_m\leq T$. Take a bounded Lipschitz function
$g\colon X^m:=[L^2(D)]^m\rightarrow \mathbb{R}$. Then
\begin{align}
  \label{eq:1002}
  & \left|\int_{\mathcal{X}}{g(\pi(p))\m\eta_{a_n}(p)}-\int_{\mathcal{X}}{g(\pi(p))\m\eta_{b_n}(p)}\right|^2 
    = \left|\mean{g(\pi(\rho_{a_n}))}-\mean{g(\pi(\rho_{b_n}))}\right|^2 \nonumber\\
  & \leq L(g)\mean{\left\|\pi(\rho_{a_n})-\pi(\rho_{b_n})\right\|_{[L^2(D)]^m}}^2
    \leq L(g)\sum_{j=1}^{m}{\mean{\int_{\mathbb{R}}{\left(\rho_{a_n}(x,t_j)-\rho_{b_n}(x,t_j)\right)^2\m x}}},
\end{align}
where we have used the H\"older inequality in the last step. Let us denote $N^M_n:=\max{\{N^a_n;N^b_n\}}$ and
$N^m_n:=\min{\{N^a_n;N^b_n\}}$. For each $j\in\{1,\cdots,m\}$, we expand the square of the sum of $N^a_n+N^b_n$ terms in the $j$th
term of~\eqref{eq:1002}. As $N^a_n$ and $N^b_n$ might differ, it is convenient to split the resulting $(N^a_n+N^b_n)^2$ product
terms into six different categories. We have
\begin{itemize}
\item 
  $N^a_n$ terms of type $(N^a_n)^{-2}w^2_{a_n}(x-q_i(t_j))$,
  \vspace{-0.3pc}
\item 
  $N^b_n$ terms of type $(N^b_n)^{-2}w^2_{b_n}(x-q_i(t_j))$,
  \vspace{-0.3pc}
\item 
  $2N^m_n$ terms of type $-(N^M_nN^m_n)^{-1}w_{a_n}(x-q_i(t_j))w_{b_n}(x-q_i(t_j))$,
  \vspace{-0.3pc}
\item 
  $N^a_n(N^a_n-1)$ terms of type $(N^a_n)^{-2}w_{a_n}(x-q_i(t_j))w_{a_n}(x-q_k(t_j))$, where $i\neq k$,
  \vspace{-0.3pc}
\item 
  $N^b_n(N^b_n-1)$ terms of type $(N^b_n)^{-2}w_{b_n}(x-q_i(t_j))w_{b_n}(x-q_k(t_j))$, where $i\neq k$,
  \vspace{-0.3pc}
\item 
  $2N^M_nN^m_n-2N^m_n$ terms of type $-(N^M_nN^m_n)^{-1}w_{a_n}(x-q_i(t_j))w_{b_n}(x-q_k(t_j))$, where $i\neq k$.
\end{itemize}
With the help of Lemma~\ref{la:4} and the scaling of $\{(a_{n}, N^a_n)\}_n$ and $\{(b_{n},N^b_n)\}_n$, we deduce that the contributions of the first three families to the right-hand-side of~\eqref{eq:1002} vanish in the limit $n\rightarrow\infty$. The contribution of the remaining three families is given by
\begin{align}
  \label{eq:2022}
  & \sum_{j=1}^{m}{\left\{\frac{N^a_n(N^a_n-1)}{(N^a_n)^2}\mean{w_{\sqrt{2}a_n}(q_1(t_j)-q_2(t_j))}
    +\frac{N^b_n(N^b_n-1)}{(N^b_n)^2}\mean{w_{\sqrt{2}b_n}(q_1(t_j)-q_2(t_j))}\right.}\nonumber\\
  & \quad \left.- \frac{2N^M_nN^m_n-2N^m_n}{N^M_nN^m_n}\mean{w_{\sqrt{a^2_n+b^2_n}}(q_1(t_j)-q_2(t_j))}\right\}.
\end{align}

The probability density functions of the random variables $q_1(t_j)-q_2(t_j)$, $j=1,\cdots,m$, which we denote by
$f_{q_1(t_j)-q_2(t_j)}$, belong to the Schwartz space $\mathcal{S}$ (i.e., the space of rapidly decaying real-valued functions on
$\mathbb{R}$). This can be justified as follows. The density of the sum of two continuous independent real-valued random variables
is given by the convolution of the densities of the two random variables. 
In addition, for $f_1,f_2\in\mathcal{S}$ we have that also $f_1\ast f_2\in\mathcal{S}$. As a consequence of Assumption~(G), the
laws of $q_1(t_j)$ and $-q_2(t_j)$, $j=1,\cdots,m$, are Gaussian, and hence they belong to $\mathcal{S}$. We can then rewrite the
expectations in~\eqref{eq:2022} with dualities in $\mathcal{S}'$, and we deduce the convergence of the $j$th term of the sum to
\begin{align*}
  f_{q_1(t_j)-q_2(t_j)}(0)+f_{q_1(t_j)-q_2(t_j)}(0)-2f_{q_1(t_j)-q_2(t_j)}(0)=0,\qquad j\in\{1,\cdots,m\}
\end{align*} 
by means of the convergence
$w_{\ep}\rightarrow\delta$ in $\mathcal{S'}$ for $\ep\rightarrow 0$. This leads to
\begin{align*}
  \int_{X^m}{g(z)\m(\pi_{\ast}\eta_1)(z)} 
  &=\lim_{n\rightarrow \infty}{\int_{X^m}{g(z)\m(\pi_{\ast}\eta_{a_n})(z)}}
    =\lim_{n\rightarrow \infty}{\int_{X^m}{g(z)\m(\pi_{\ast}\eta_{b_n})(z)}}= \int_{X^m}{g(z)\m(\pi_{\ast}\eta_2)(z)},
\end{align*}
where $\pi_{\ast}$ indicates a push-forward of measures by $\pi$. Uniqueness of weak limits implies that $\pi_{\ast}\eta_1$ and
$\pi_{\ast}\eta_2$ (the projections of $\eta_1$ and $\eta_2$ onto $\{t_1,\cdots,t_m\}$) coincide. Since the times involved are
arbitrary, we deduce $\eta_1\equiv\eta_2$. This concludes the proof.
\end{proof}

\subsection{Noise replacement in evolution system for $(\rho_{\ep},j_{\ep})$}
\label{sec:7}

We now replicate the analysis described in \emph{Steps 1--2} of Subsection~\ref{sec:10} adapted to the setting considered here, in
order to derive a regularised Dean--Kawasaki model. It is straightforward to derive system~\eqref{eq:80} using the It\^o calculus
on $\rho_{\ep}$ and $j_{\ep}$.  System~\eqref{eq:80} is similar to the system of evolution equations for the original quantities
$\rho_N$ and $j_N$, see~\cite[Eq.~(4)]{Lutsko2012a}.  In particular, in analogy to the original derivation of the Dean--Kawasaki
model, the noise term $\mathcal{\dot Z}_N=\sigma N^{-1}\sum_{i=1}^{N}{w_{\ep}(x-q_i(t))\dot \beta_i}$ is not a closed expression
of the leading quantities $\rho_{\ep}$ and $j_{\ep}$. For this reason, we replace $\mathcal{\dot Z}_N$ with a multiplicative
noise, which we initially take to be of the form
\begin{align}
  \label{eq:29}
  \frac{\sigma}{\sqrt{N}}f(\rho_{\ep})Q^{1/2}_\ep\,\xi,
\end{align}
where $\xi$ is a space-time white noise, $f\colon\mathbb{R}\rightarrow \mathbb{R}$ is to be determined, and $Q_\ep$ is suitable
spatial operator to be determined as well. In order to understand the above chosen structure, we first compute the spatial
covariance for $\mathcal{Z}_N$. For given points $x_1,x_2\in\mathbb{R}$, we have
\begin{align*}
  \mean{\mathcal{Z}_{N}(x_1,t)\mathcal{Z}_{N}(x_2,t)} 
  & = \mean{\left(\int_{0}^{t}{\frac{\sigma}{N}\sum_{i=1}^{N}{w_{\ep}(x_1-q_i(u))
    \m \beta_i(u)}}\right)\left(\int_{0}^{t}{\frac{\sigma}{N}\sum_{i=1}^{N}{w_{\ep}(x_2-q_i(u))\m \beta_i(u)}}\right)}\nonumber\\
  & = \frac{\sigma^2}{N^2}\mean{\sum_{i=1}^{N}{\left(\int_{0}^{t}{w_{\ep}(x_1-q_i(u))\m\beta_i(u)}\right)
    \left(\int_{0}^{t}{w_{\ep}(x_2-q_i(u))\m\beta_i(u)}\right)}}\nonumber\\
  & \quad +\frac{\sigma^2}{N^2}\mean{\sum_{i\neq j}
    {\left(\int_{0}^{t}{w_{\ep}(x_1-q_i(u))\m\beta_i(u)}\right)\left(\int_{0}^{t}{w_{\ep}(x_2-q_j(u))\m\beta_j(u)}\right)}}\nonumber\\
  & = \frac{\sigma^2}{N^2}\mean{\sum_{i=1}^{N}{\int_{0}^{t}{w_{\ep}(x_1-q_i(u))w_{\ep}(x_2-q_i(u))\m u}}},
\end{align*}
where in the last equality we have used basic It\^o calculus, as well as the fact that stochastic integrals driven by independent
noises are uncorrelated. Lemma~\ref{la:4} gives
$w_{\ep}(x_1-q_i(u))w_{\ep}(x_2-q_i(u))=w_{\sqrt{2}\ep}(x_1-x_2)w_{\ep/\sqrt{2}}(q_i(u)-(x_1+x_2)/2)$, for all $i=1,\cdots,N$. By
summing over $i=1,\cdots,N$ and dividing by $N$, we conclude that
\begin{align*}
  N^{-1}\sum_{i=1}^{N}{w_{\ep}(x_1-q_i(u))w_{\ep}(x_2-q_i(u))}=w_{\sqrt{2}\ep}(x_1-x_2)\rho_{\ep/\sqrt{2}}((x_1+x_2)/2,u).
\end{align*} 

We deduce
\begin{align}
  \label{eq:28}
  \mean{\mathcal{Z}_{N}(x_1,t)\mathcal{Z}_{N}(x_2,t)} 
  = w_{\sqrt{2}\ep}(x_1-x_2)\int_{0}^{t}{\mean{\frac{\sigma^2}{N}\rho_{\ep/\sqrt{2}}\left(\frac{x_1+x_2}{2},u\right)}\m u}.
\end{align}
Equation~\eqref{eq:28} indicates how to define the multiplicative noise~\eqref{eq:29}. The term $ w_{\sqrt{2}\ep}(x_1-x_2)$ is
deterministic. It is then not unreasonable to assume that such a term can be associated with the covariance structure for the
stochastic noise in~\eqref{eq:29}. On the other hand, the random variable in the right-hand-side of~\eqref{eq:28} should,
according to It\^o calculus, be the square of the stochastic integrand of~\eqref{eq:29} evaluated at $(x_1+x_2)/2$. We thus
propose the following noise replacement for $\mathcal{Z}_{N}$
\begin{equation*}
  \mathcal{\dot Y}_N:=\frac{\sigma}{\sqrt{N}}\sqrt{\rho_{\ep/\sqrt{2}}}\,\underbrace{Q^{1/2}_{\sqrt{2}\ep}\,\xi}_{\tilde{\xi}_{\ep}},
\end{equation*} 
where $Q_{\sqrt{2}\ep}$ is a convolution operator with kernel $w_{\sqrt{2}\ep}$. The domain of such an operator is specified in
the statement of Theorem~\ref{thm:1}, whose proof is provided in the next subsection.

\begin{rem}
  Note that $\tilde{\xi}_{\ep}$ is a spatially correlated noise approximating the action of a space-time white noise for small
  values of $\ep$. Also note the scaling $\ep/\sqrt{2}$, as opposed to the original scaling $\ep$, characterising
  $\rho_{\ep/\sqrt{2}}$ in the definition of noise $\mathcal{\dot Y}_N$. The factor $\sqrt{2}$ appears for simple analytical
  reasons. This will not affect our considerations for the limit $\ep\rightarrow 0$, $N\rightarrow \infty$, as we will point out
  in Subsection~\ref{ss:200}.
\end{rem}

\subsection{Covariance error bound associated with noise replacement}
\label{sec:8}

The main modelling result concerns a thorough comparison of the stochastic noises $\mathcal{\dot Z}_N$ and the noise
$\mathcal{\dot Y}_N$ just introduced.  Specifically, we estimate the ``price'' one has to pay in order to replace $\mathcal{Z}_N$
with $\mathcal{Y}_N$ in~\eqref{eq:80}. More specifically, we are interested in quantifying the size of
$\mathcal{R}_N=\mathcal{Z}_N-\mathcal{Y}_N$ and $\mathcal{Y}_N$ in terms of $\ep,N$. Our goal is to prove that, in the limit of
$\ep \to 0$ and $N \to \infty$, the remainder $\mathcal{R}_{N}$ is negligible with respect to $\mathcal{Y}_N$. As a consequence,
exchanging the stochastic noises results in a negligible correction.

\begin{proof}[Proof of Theorem~\ref{thm:1} under Assumption~(G)] The convolution operator $Q_{\sqrt{2}\ep}$ is defined as
  $Q_{\sqrt{2}\ep}\colon L^2(D)\rightarrow L^2(D)\colon f\mapsto Q_{\sqrt{2}\ep}f(\cdot):=\int_{D}{w_{\sqrt{2}\ep}(\cdot-y)f(y)\m
    y}$.
  We compare the noises $\mathcal{Z}_N$, $\mathcal{Y}_N$ by means of their spatial covariance structures at any given time
  $t\in[0,T]$, for any couple of points $x_1,x_2\in D$. Following on the construction in the previous section, we have
\begin{align*}
  \mean{\mathcal{Z}_N(x_1,t)\mathcal{Z}_N(x_2,t)} 
  & = \frac{\sigma^2}{N}w_{\sqrt{2}\ep}(x_1-x_2)\int_{0}^{t}{\mean{\rho_{\ep/\sqrt{2}}\left(\frac{x_1+x_2}{2},s\right)}\m s},
\end{align*}
and with similar arguments one finds
\begin{align*}
  \mean{\mathcal{Y}_N(x_1,t)\mathcal{Y}_N(x_2,t)} 
  & =\frac{\sigma^2}{N}w_{\sqrt{2}\ep}(x_1-x_2)\int_{0}^{t}{\mean{\sqrt{\rho_{\ep/\sqrt{2}}(x_1,s)\rho_{\ep/\sqrt{2}}(x_2,s)}}\m s}.
\end{align*}
We notice that the two covariances share the common prefactor $\sigma^2N^{-1}w_{\sqrt{2}\ep}(x_1-x_2)$. Our analysis will thus be
focused on the terms where the two expressions differ. If we want to evaluate the difference of the two above covariance
expressions, it is useful to study, for any given time $s\in[0,t]$,
\begin{align}
  \label{eq:30}
  \mean{\left|\rho_{\ep/\sqrt{2}}\left(\frac{x_1+x_2}{2},s\right)-\sqrt{\rho_{\ep/\sqrt{2}}(x_1,s)\rho_{\ep/\sqrt{2}}(x_2,s)}\right|}.
\end{align}

For notational convenience, we define $m:=(x_1+x_2)/2$ and drop the time dependence for $\rho_{\ep/\sqrt{2}}$. We add and subtract
$\rho_{\ep/\sqrt{2}}(m)$ to both $\rho_{\ep/\sqrt{2}}(x_1)$ and $\rho_{\ep/\sqrt{2}}(x_2)$. As a result, the random variable
in~\eqref{eq:30} turns into
\begin{align*}
  \left|\rho_{\ep/\sqrt{2}}\left(m\right)-\sqrt{\rho^2_{\ep/\sqrt{2}}\left(m\right)+b(x_1,x_2)}\right| 
  & = \left|\rho_{\ep/\sqrt{2}}\left(m\right)\right|\left(1-\sqrt{1+\frac{b(x_1,x_2)}{\rho^2_{\ep/\sqrt{2}}\left(m\right)}}\right)
    \leq \frac{\left|b(x_1,x_2)\right|}{\rho_{\ep/\sqrt{2}}\left(m\right)},
\end{align*}
where we have defined 
\begin{align*}
b(x_1,x_2) & := \rho_{\ep/\sqrt{2}}(m)\!\left[\rho_{\ep/\sqrt{2}}(x_1)+\rho_{\ep/\sqrt{2}}(x_2)-2\rho_{\ep/\sqrt{2}}(m)\right] 
 + (\rho_{\ep/\sqrt{2}}(x_1)-\rho_{\ep/\sqrt{2}}(m))(\rho_{\ep/\sqrt{2}}(x_2)-\rho_{\ep/\sqrt{2}}(m)).
\end{align*}
We can thus bound the random variable in~\eqref{eq:30} by the sum
\begin{align}
  \label{eq:33}
  \left|\rho_{\ep/\sqrt{2}}(x_1)+\rho_{\ep/\sqrt{2}}(x_2)-2\rho_{\ep/\sqrt{2}}(m)\right| 
  + \frac{\left|\rho_{\ep/\sqrt{2}}(x_1)-\rho_{\ep/\sqrt{2}}(m)\right|\left|\rho_{\ep/\sqrt{2}}(x_2)
  -\rho_{\ep/\sqrt{2}}(m)\right|}{\rho_{\ep/\sqrt{2}}(m)}=:T_1+T_2.
\end{align}

\emph{Expected value of term $T_2$}. We use the H\"older inequality twice and we obtain
\begin{align}
  \label{eq:64}
  \mean{T_2}\leq\mean{\rho^{-2}_{\ep/\sqrt{2}}(m)}^{\frac{1}{2}}\mean{\left|\rho_{\ep/\sqrt{2}}(x_1)
  -\rho_{\ep/\sqrt{2}}(m)\right|^4}^{\frac{1}{4}}\mean{\left|\rho_{\ep/\sqrt{2}}(x_2)-\rho_{\ep/\sqrt{2}}(m)\right|^4}^{\frac{1}{4}}.
\end{align}
The first expectation in the right-hand-side of~\eqref{eq:64} can be bounded, independently of $N,\ep$, by means of
Proposition~\ref{p:2}. The two remaining expectations in~\eqref{eq:64} are identical up to a swap of $x_1$ and $x_2$, hence we
analyse just one of them.

In analogy to some computations previously carried out for~\eqref{eq:11} and~\eqref{eq:14}, we set
$ \tau(x_1,m):=w_{\ep}(x_1-q_1(s))-w_{\ep}(m-q_1(s))$. We expand
\begin{align}
  \label{eq:65}
  \mean{\left|\rho_{\ep/\sqrt{2}}(x_1)-\rho_{\ep/\sqrt{2}}(m)\right|^4} 
  & \leq \frac{1}{N^3}\underbrace{\mean{\tau^4(x_1,m)}}_{=:I_1}+\frac{C}{N^2}\underbrace{\mean{\left|\tau(x_1,m)\right|}
    \mean{\left|\tau^3(x_1,m)\right|}}_{=:I_2}\nonumber\\
  &\quad +  \frac{C}{N^2}\underbrace{\mean{\tau^2(x_1,m)}^2}_{=:I_3} 
    + \frac{C}{N}\underbrace{\mean{\tau(x_1,m)}^2\mean{\tau^2(x_1,m)}}_{=:I_4}
  +\underbrace{\mean{\tau(x_1,m)}^4}_{=:\mathsf{ct}}.
\end{align}
Note the absence of integration in $x$, as opposed to~\eqref{eq:11} and~\eqref{eq:14}. We use Lemma~\ref{lem:2} and a first-order
Taylor approximation in space together with Assumption~(G)~\ref{it:G2}, to deduce
\begin{align*}
  \left|\mean{\tau(x_1,m)}\right|=\left|\mathcal{G}(x_1,\mu_q(s),\sigma^2_q(s)+\ep^2)-\mathcal{G}(m,\mu_q(s),\sigma^2_q(s)
  +\ep^2)\right|\leq C|x_1-x_2|.
\end{align*}
We rely on Lemma~\ref{la:4}, Lemma~\ref{lem:2} to write $\mean{\tau^2(x_1,m)}$ as 
\begin{align*}
  & \frac{1}{\sqrt{4\pi\ep^2}}\mean{w_{\ep/\sqrt{2}}(x_1-q_1(s))+w_{\ep/\sqrt{2}}(m-q_1(s))
    -2w_{\ep/\sqrt{2}}\left[\frac{x_1+m}{2}-q_1(s)\right]\exp\left\{-\frac{(x_1-m)^2}{4\ep^2}\right\}}\nonumber\\
  & \quad = \frac{1}{\sqrt{4\pi\ep^2}}\left\{\mathcal{G}(x_1,\mu_q(s),\sigma^2_q(s)+\ep^2/2)+\mathcal{G}(m,\mu_q(s),\sigma^2_q(s)
    +\ep^2/2)+2\mathcal{G}\left(\frac{x_1+m}{2},\mu_q(s),\sigma^2_q(s)+\ep^2/2\right)\right\}\\
  & \quad\quad +\frac{2}{\sqrt{4\pi\ep^2}}\mathcal{G}\left(\frac{x_1+m}{2},\mu_q(s),\sigma^2_q(s)
    +\ep^2/2\right)\left\{1-\exp\left\{-\frac{(x_1-m)^2}{4\ep^2}\right\}\right\}.
\end{align*}
We use a second-order approximation of the type $\left|f(x_1)+f(m)-2f((x_1+m)/2)\right|\leq C|x_1-m|^2$ applied to
$f(x)=\mathcal{G}(x,\mu_{q}(s),\sigma_q^2(s)+\ep^2/2)$, as well as inequality~\eqref{eq:6}, to deduce
\begin{align}
  \label{eq:2000}
  \mean{\tau^2(x_1,m)}\leq C\left(\frac{1}{\ep}+\frac{1}{\ep^3}\right)|x_1-x_2|^2\leq \frac{C}{\ep^3}|x_1-x_2|^2.
\end{align}
The bound $\max_{y}{|w'_{\ep}(y)|}\leq C\ep^{-2}$, the mean-value theorem and~\eqref{eq:2000} allow us to deduce
\begin{align*}
  \mean{\tau^4(x_1,m)}\leq \frac{C}{\ep^4}|x_1-x_2|^2\mean{\tau^2(x_1,m)}\leq \frac{C}{\ep^7}|x_1-x_2|^4.
\end{align*}
The above estimate is the most demanding in terms of the scaling $N,\ep$, and justifies the hypothesis $\theta\geq 7/2$. Finally,
the terms $\mean{\left|\tau^3(x_1,m)\right|}, \mean{\left|\tau(x_1,m)\right|}$ can be bounded, by means of the H\"older
inequality, by $\mean{\tau^4(x_1,m)}^{3/4}$ and $\mean{\tau^4(x_1,m)}^{1/4}$ respectively. We can put all these estimates together
for the benefit of $I_1$, $I_2$, $I_3$, $I_4$ and $\mathsf{ct}$ in~\eqref{eq:65} and obtain
\begin{align*}
  \mean{\left|\rho_{\ep/\sqrt{2}}(x_1)-\rho_{\ep/\sqrt{2}}(m)\right|^4}\leq C|x_1-x_2|^4.
\end{align*}

The estimate for points $x_2$ and $m$ replacing $x_1$ and $m$ is identical. As a result of the above observations, we can bound
the left-hand-side in~\eqref{eq:64}, thus obtaining
\begin{align}
  \label{eq:68}
  T_2\leq C|x_1-x_2|^2,
\end{align}
for $C$ independent of $N$ and $\ep$.

\emph{Expected value of term $T_1$}. Using similar arguments to the analysis of $T_2$, it is not difficult to show that
\begin{align*}
  \mean{\left|\rho_{\ep/\sqrt{2}}(x_1)+\rho_{\ep/\sqrt{2}}(x_2)-2\rho_{\ep/\sqrt{2}}(m)\right|} 
  & \leq \mean{\left|\rho_{\ep/\sqrt{2}}(x_1)+\rho_{\ep/\sqrt{2}}(x_2)-2\rho_{\ep/\sqrt{2}}(m)\right|^2}^{1/2}\leq C|x_1-x_2|^2
\end{align*}
by using a fourth-order approximation of the type $|f(x_1)+f(x_2)+6f(m)-4f(m_1)-4f(m_2)|\leq C|x_1-x_2|^4$, where
$x_1<m_1<m<m_2<x_2$ are equi-distanced. We skip the details. We combine the estimates for $T_1$ and $T_2$ and deduce
\begin{align*}
  \left|\mean{\mathcal{Z}_N(x_1,t)\mathcal{Z}_N(x_2,t)}-\mean{\mathcal{Y}_N(x_1,t)\mathcal{Y}_N(x_2,t)}\right|
  \leq \frac{C\sigma^2}{N}w_{\sqrt{2}\ep}(x_1-x_2)|x_1-x_2|^2,
\end{align*}
which is exactly~\eqref{eq:71}. Using Lemma~\ref{lem:1}, it is also immediate to notice that
\begin{align*}
  \left|\mean{\mathcal{Z}_N(x_1,t)\mathcal{Z}_N(x_2,t)}\right|\leq \frac{C\sigma^2}{N}w_{\sqrt{2}\ep}(x_1-x_2),
\end{align*}
which is~\eqref{eq:72}, and the proof of Theorem~\ref{thm:1}~\ref{it:cov1} is complete. The proof of~\ref{it:cov2} is a
straightforward consequence of the estimate $N^{-1}w_{\sqrt{2}\ep}(x_1,x_2)\leq \ep^{\theta-1}$ and
of~\eqref{eq:71},~\eqref{eq:72}.
\end{proof} 

\begin{rem}
  The proof of Theorem~\ref{thm:1} employs a multiplicative approach for the estimation of the random variable
  in~\eqref{eq:30}. We rely on the estimate $\left|\sqrt{a^2}-\sqrt{a^2+c}\right|\leq |c/a|$, instead of using the standard
  estimate
  \begin{align}
    \label{eq:70}
    \left|\sqrt{a^2}-\sqrt{a^2+c}\right|\leq\sqrt{|c|}.
  \end{align}
  In our specific case, we have $a:=\rho_{\ep/\sqrt{2}}(m)$ and $c:=b(x_1,x_2)$. The multiplicative approach has the disadvantage
  of having the term $a^{-1}$ ($\rho^{-1}_{\ep/\sqrt{2}}(m)$ for us) in the bound. For this reason, we need to prove that $a$ is
  bounded away from 0, and this is the reason why Proposition~\ref{p:2} is needed. On the other side, the multiplicative approach
  provides sharper estimates (in terms of orders of power of $|x_1-x_2|$) for the estimation of the difference of the spatial
  covariances of noises $\mathcal{Z}_N$ and $\mathcal{Y}_N$ in~\eqref{eq:71}, if compared to what we would get if we relied
  on~\eqref{eq:70}. For these reasons, we chose the multiplicative approach.
\end{rem}

\begin{rem}
  The replacement of $\mathcal{Z}_N$ with $\mathcal{Y}_N$ gives a negligible error. This error is given
  by~\eqref{eq:71},~\eqref{eq:72}, depending on the distance $|x_1-x_2|$. We split the analysis in three cases.
  \begin{itemize}
  \item \emph{Points $x_1,x_2\in D$ such that $|x_1-x_2|^2\leq \ep^{2}$}.  Estimates~\eqref{eq:71},~\eqref{eq:72} directly imply
    \begin{align*}
      \left|\mean{\mathcal{Z}_N(x_1,t)\mathcal{Z}_N(x_2,t)}-\mean{\mathcal{Y}_N(x_1,t)\mathcal{Y}_N(x_2,t)}\right|  
      \leq \frac{C}{N}\cdot\frac{1}{\ep}\cdot\ep^{2} 
      & \approx \mathcal{O}(\ep^{\theta+1}),\\
      \left|\mean{\mathcal{Z}_N(x_1,t)\mathcal{Z}_N(x_2,t)}\right| \leq \frac{C}{N}\cdot\frac{1}{\ep} 
      & \approx \mathcal{O}(\ep^{\theta-1}).
    \end{align*}
  \item \emph{Points $x_1,x_2\in D$ such that $|x_1-x_2|^2\in(\ep^2,\ep)$}.  Estimates~\eqref{eq:71},~\eqref{eq:72} directly imply
    \begin{align*}
      \left|\mean{\mathcal{Z}_N(x_1,t)\mathcal{Z}_N(x_2,t)}-\mean{\mathcal{Y}_N(x_1,t)\mathcal{Y}_N(x_2,t)}\right|  
      \leq \frac{C}{N}\cdot\frac{1}{\ep}\cdot \ep & \approx \mathcal{O}(\ep^{\theta}),\\
      \left|\mean{\mathcal{Z}_N(x_1,t)\mathcal{Z}_N(x_2,t)}\right| \leq \frac{C}{N}\cdot\frac{1}{\ep} 
                                                  & \approx \mathcal{O}(\ep^{\theta-1}).
    \end{align*}
  \item \emph{Points $x_1,x_2\in D$ such that $|x_1-x_2|^2\geq \ep$}. The prefactor $N^{-1}w_{\sqrt{2}\ep}(x_1-x_2)$ decays
    exponentially in $\ep$, and both $\mathcal{Z}_N$, $\mathcal{Y}_N$ are negligible, and hence interchangeable.
  \end{itemize}
\end{rem}

\subsection{Non-vanishing potential $V(q)$: modifications of proofs of main results}
\label{sec:13}

We show that Proposition~\ref{p:1}, Proposition~\ref{p:22} and Theorem~\ref{thm:1} also hold with Assumption~(G) replaced by
Assumption~(NG).

\begin{proof}[Adaptation of the proof of Proposition~\ref{p:1} under Assumption~(NG)] 
  In the proof of Proposition~\ref{p:1}, we deal with three time-regularity estimates for the families $\{\rho_{\ep}\}_{\ep}$,
  $\{j_{\ep}\}_{\ep}$, $\{j_{2,\ep}\}_{\ep}$. In each one of them, we expand an $L^p$-norm of the relevant
  quantities~\eqref{eq:102},~\eqref{eq:120}. In each case, we end up with upper bounds consisting of sums of terms labelled as
  $\mathsf{ct}$, $I_1$ (and also $I_2$, $I_3$ and $I_4$ when applicable). If we now assume that $V$ satisfies Assumption~(NG), we
  can use Proposition~\ref{p:3}, bounds~\eqref{eq:104}--\eqref{eq:105}, to deduce the bound $\mathsf{ct}\leq |t-s|^{1+\beta}$ for
  all the three estimates. As for the remaining terms $I_1$ (and $I_2, I_3$ and $I_4$ when applicable), we use
  Proposition~\ref{p:3}, bounds~\eqref{eq:108}--\eqref{eq:117}, to control all terms $\mean{\tau_{1}(x,s,t)}^2$ as
  $\mean{\tau_1(x,s,t)}^2\leq C|t-s|$, with $C$ independent of $x$ and $\ep$. It only remains to consider the integrals of the
  form
\begin{equation*}
    \begin{cases}
      \displaystyle\int_{\mathbb{R}}{\mean{\left(w_{\ep}(x-q(t))-w_{\ep}(x-q(s))\right)^2}\m x}, & \mbox{for Step~1}\vspace{0.2 pc}\\
      \displaystyle\int_{\mathbb{R}}{\mean{\tau_1(x,s,t)^c}\m x},\,\,\,c\in\{2, 3, 4\}, & \mbox{for Steps~2 and~3}.
    \end{cases}
\end{equation*}
The algebraic steps involved in the $x$-variable integration remain unaltered. As for the expected value of the resulting
$(q(t),p(t),q(s),p(s))$-dependent quantities, the time-regularity estimates also do not change. This is a consequence of the
rapidly decaying probability density function $g(t,q,p)$ and the polynomial growth of $V$. These facts guarantee the existence
(and the correct time-dependency) of all the required moments of $q(t)-q(s)$ and $p(t)-p(s)$. As for the proofs of tightness of
$\{\rho_{\ep}(\cdot,0)\}_{\ep}$, $\{j_{\ep}(\cdot,0)\}_{\ep}$, $\{j_{2,\ep}(\cdot,0)\}_{\ep}$, these can be adapted by using
Remark~\ref{rem:200} for the estimates of the terms labelled $\mathsf{ct}$, see for instance~\eqref{eq:2002}.
\end{proof}

\begin{proof}[Adaptation of the proof of Proposition~\ref{p:22} under Assumption~(NG)]
  The only change in the proof is the justification of the probability density functions of $q_1(t_j)$ and $-q_2(t_j)$,
  $j=1,\cdots,m$, belonging to $\mathcal{S}$. This is stated in~\cite[Theorem 0.1]{Herau2004a}.
\end{proof}

\begin{proof}[Adaptation of the proof of Theorem~\ref{thm:1} under Assumption~(NG)] The proof is identical up to, and including,
  estimate~\eqref{eq:64}. After that, we work on~\eqref{eq:65} by using the adaptation of Proposition~\ref{p:2} under
  Assumption~(NG), whose proof is included in Subsection~\ref{ss:2}. We also need to provide estimates for the terms $I_1$, $I_2$,
  $I_3$, $I_4$ and $\mathsf{ct}$ without relying on the Gaussian setting. We define $\tilde{g}_t$ to be the probability density
  function of $q(t)$. We begin with $\mathsf{ct}$, and bound
  \begin{align*}
    \left|\mean{\tau(x_1,m)}\right| 
    & = \left|\int_{\mathbb{R}}{\left(w_{\ep}(x_1-y)-w_{\ep}(m-y)\right)\tilde{g}_t(y)
      \m y}\right|=\left|\int_{\mathbb{R}}{w_{\ep}(x_1-y)\left(\tilde{g}_t(y)-\tilde{g}_t(y+m-x_1)\right)\m y}\right|\\
    & \leq \|w_{\ep}(x_1-\cdot)\|_{L^1(\mathbb{R})}\|\tilde{g}_t(\cdot)-\tilde{g}_t(\cdot+m-x_1)\|_{L^{\infty}(\mathbb{R})}\leq C|x_1-x_2|,
\end{align*}
where we have used the change of variables for $q$ in the second equality (shift by $m-x_1$), and the boundedness of
$(\partial/\partial q){g}(q,p,t)$ provided by~\eqref{eq:137}. This concluded the analysis of the term $\mathsf{ct}$. We now turn
to
\begin{align*}
  & \mean{\tau(x_1-m)^2}\\
  & \quad = \frac{1}{\sqrt{4\pi\ep^2}}\mean{w_{\ep/\sqrt{2}}(x_1-q_1(s))+w_{\ep/\sqrt{2}}(m-q_1(s))
    -2w_{\ep/\sqrt{2}}\left[\frac{x_1+m}{2}-q_1(s)\right]\exp\left\{-\frac{(x_1-m)^2}{4\ep^2}\right\}}\\
  & \quad\leq  \frac{1}{\sqrt{4\pi\ep^2}}\int_{\mathbb{R}}{w_{\frac{\ep}{\sqrt{2}}}(x_1-y)\left(\tilde{g}_t(y)+\tilde{g}_t(y+m-x_1,t)
    -2\tilde{g}_t\left(y+\frac{x_1+m}{2}-x_1,t\right)\right)\m y}\\
  & \quad\quad + \frac{1}{\sqrt{4\pi\ep^2}}\frac{(x_1-m)^2}{4\ep^2}\int_{\mathbb{R}}{w_{\frac{\ep}{\sqrt{2}}}
    \left(y-\frac{x_1+x_2}{2}\right)\tilde{g}_t(y)\m y}\leq C\left(\frac{1}{\ep}+\frac{1}{\ep^3}\right)|x_1-x_2|^2
    \leq \frac{C}{\ep^3}|x_1-x_2|^2.
\end{align*}
We have used~\eqref{eq:6}, suitable changes of variables for $q$, and a second-order Taylor approximation for $\tilde{g}_t$ in the
first inequality, as well as boundedness of suitable derivatives of $g(q,p,t)$ by means of~\eqref{eq:137} in the second
inequality.  This settles term $I_3$. The remaining terms $I_1$, $I_2$ and $I_4$ are dealt with in the same way as in the original
proof. The estimation of term $T_1$ can be performed with the same techniques used above in the adaptation of the analysis for
term $T_2$.
\end{proof}

\subsection{Defining the regularised Dean--Kawasaki model}
\label{ss:200}

An immediate consequence of Theorem~\ref{thm:1} is that, in a simultaneous limit of $N\rightarrow \infty$ and $\ep\rightarrow 0$,
the stochastic noise $\mathcal{Z}_N$ in system~\eqref{eq:80} vanishes. This differs from the original Dean--Kawasaki model.
However, a close approximation of such a model is recovered for a large but fixed number of particles $N$, by means of
Theorem~\ref{thm:1}.  We make some additional approximations to~\eqref{eq:80}. These approximations are aimed at deriving a
closed-expression formulation, in the variable $(\rho_{\ep},j_{\ep})$, for our regularised version of the Dean--Kawasaki model.

\emph{Approximation 1}. We replace the noise $\mathcal{Z}_N$ with the noise $\mathcal{Y}_N$ (i.e., we neglect the remainder
$\mathcal{R}_N$). This has been discussed in detail in Subsections~\ref{sec:7} and~\ref{sec:8}.

\emph{Approximation 2}. With respect to the noise $\mathcal{Y}_N$, we replace $\{\rho_{\ep/\sqrt{2}}\}_{\ep}$ with
$\{\rho_{\ep}\}_{\ep}$. This is justified by the fact that both families admit the same limit in distribution in
$\mathcal{X}=C(0,T;L^2(D))$ thanks to Proposition~\ref{p:22}. In addition, the noise $\mathcal{Y}_N$ features the vanishing
rescaling $N^{-1/2}$, which provides an additional contribution in reducing the error caused by the replacement of
$\rho_{\ep/\sqrt{2}}$ with $\rho_{\ep}$.


\emph{Approximation 3}. We replace the term $j_{2,\ep}$ with a multiple of $\frac{\partial\rho_{\ep}}{\partial x}$. This can be seen as a replacement of the
random quantity $p_i^2(t)$ with its expected value. Indeed, the equilibrium state of the particle system $\{(q_i,p_i)\}_{i=1}^{N}$
is identified by the joint density
\begin{align*}
  C(N,V,\sigma,\gamma)\prod_{i=1}^{N}{\exp\left\{-\frac{2\gamma}{\sigma^2}\left(\frac{p_i^2}{2}+V(q_i)\right)\right\}}
  =C(N,V,\sigma,\gamma)\prod_{i=1}^{N}{M(q_i,p_i)}.
\end{align*}
The equilibrium state shows independence between position and velocity of particles. This allows to write
\begin{align*}
  \mean{j_{2,\ep}(x,t)}=\mean{p_1^2(t)}\mean{\frac{\partial\rho_{\ep}}{\partial x}(x,t)}=\frac{\sigma^2}{2\gamma}\mean{\frac{\partial\rho_{\ep}}{\partial x}(x,t)},
\end{align*}
which suggests the replacement of $j_{2,\ep}$ with a multiple of $\rho_{\ep}'$. We stress the fact that at no point in this work
do we assume to be working with the steady state of the particle system~\eqref{eq:24}. Nevertheless, at least under
Assumption~(NG), the dynamics of~\eqref{eq:24} tends to the steady state for $t\rightarrow \infty$, see~\cite[Theorem
0.1.]{Herau2004a}. In the case $\sigma^2\ll 2\gamma$ (i.e., for the overdamped Langevin dynamics), this entails that
\begin{align*}
  \mbox{Var}[p^2_i(t)]\leq C\sigma^4/(2\gamma)^2\ll \sigma^2/(2\gamma)\approx\mean{p^2_i(t)}\approx 0.
\end{align*} 
It is then natural to replace $p_i^2$ with $\frac{\sigma^2}{2\gamma}$ on the probability space $\Omega$, hence to replace $j_{2,\ep}$
with $\frac{\sigma^2}{2\gamma}\frac{\partial\rho_{\ep}}{\partial x}$.

\emph{Approximation 4}. We replace the term $N^{-1}\sum_{i=1}^{N}{V'(q_i(t))w_{\ep}(x-q_i(t))}$ with the term
$V'(x)\rho_{\ep}(x,t)$. This is justified by the following result, which the reader may skip on a first reading.

\begin{lemma}
  Let the scaling of $N$ and $\ep$ be such that $\ep\rightarrow 0$ as $N\rightarrow \infty$. For each $x\in D$ and $t\in[0,T]$, we
  have $\lim_{N\rightarrow \infty}\mean{\left|V'(x)\rho_{\ep}(x,t)-N^{-1} \sum_{i=1}^{N}{V'(q_i(t))w_{\ep}(x-q_i(t))}\right|}=0$.
\end{lemma}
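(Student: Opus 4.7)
My plan is to reduce the claim to a single-particle estimate and then exploit the concentration of the mollifier $w_\ep$. The key observation is that the quantity to be bounded can be rewritten as
\begin{equation*}
V'(x)\rho_{\ep}(x,t) - \frac{1}{N}\sum_{i=1}^{N} V'(q_i(t)) w_{\ep}(x-q_i(t)) = \frac{1}{N}\sum_{i=1}^{N}\bigl(V'(x)-V'(q_i(t))\bigr) w_{\ep}(x-q_i(t)),
\end{equation*}
so by the triangle inequality and the identical distribution of the $q_i$, the expectation of its absolute value is bounded by $\mathbb{E}\bigl[|V'(x)-V'(q_1(t))|\, w_{\ep}(x-q_1(t))\bigr]$, a single-particle quantity independent of $N$.

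Under Assumption (G) the claim is immediate since $V\equiv 0$. Under Assumption (NG) I would apply the mean value theorem together with the growth bound on $V''$ from Assumption (NG)\ref{it:NG1} (the case $k=2$) to obtain
\begin{equation*}
|V'(x)-V'(q_1(t))| \leq C\, |x-q_1(t)|\,\bigl(1+\langle x\rangle^{2n-2}+\langle q_1(t)\rangle^{2n-2}\bigr).
\end{equation*}
Since $D$ is bounded and $x\in D$, the factor $\langle x\rangle^{2n-2}$ is absorbed into the constant. Letting $\tilde g_t$ denote the density of $q_1(t)$ (which exists and is smooth with rapid decay, because by Assumption (NG)\ref{it:NG3} the joint density $g$ solves the hypoelliptic Fokker--Planck equation~\eqref{eq:134} and the regularisation results of \cite{Herau2004a} make $y\mapsto (1+\langle y\rangle^{2n-2})\tilde g_t(y)$ uniformly bounded on $\mathbb{R}$), I would write the expectation as an integral against $\tilde g_t$ and perform the substitution $z=x-y$, reducing the problem to
\begin{equation*}
\int_{\mathbb{R}} w_{\ep}(z)\, |z|\, \bigl(1+\langle x-z\rangle^{2n-2}\bigr)\, \tilde g_t(x-z)\,\mathrm{d}z.
\end{equation*}

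The integrand is dominated by $K(x,t)\, w_{\ep}(z)\,|z|$ for a constant $K(x,t)$ independent of $\ep$, and a direct Gaussian moment calculation gives $\int_{\mathbb{R}} w_{\ep}(z)|z|\,\mathrm{d}z = \ep\sqrt{2/\pi}\to 0$ as $\ep\to 0$, so the expectation vanishes in the required limit. The main obstacle is verifying the polynomially weighted boundedness of $\tilde g_t$; this is where Assumption (NG)\ref{it:NG3} (together with the hypoelliptic smoothing recalled from \cite{Herau2004a}) is essential, since without such decay the $\langle q_1(t)\rangle^{2n-2}$ factor arising from $V''$ could not be controlled. Once this regularity is in hand, the argument is soft and essentially equivalent to the classical statement that mollification commutes with multiplication by a smooth function in the limit $\ep\to 0$.
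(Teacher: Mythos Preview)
Your argument is correct and follows the same skeleton as the paper's proof: reduce to the single-particle quantity $\mathbb{E}\bigl[|V'(x)-V'(q_1(t))|\,w_\ep(x-q_1(t))\bigr]$ by the triangle inequality and identical distribution, note the (G) case is trivial, and under (NG) apply the mean value theorem with the bound on $V''$ to extract a factor $|x-q_1(t)|$ whose Gaussian moment against $w_\ep$ is $O(\ep)$.

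The only difference is in how the polynomial growth of $V''$ is absorbed. The paper uses merely $\tilde g_t\in L^\infty(\mathbb{R})$ and compensates by splitting the integral into $[-\ep^{-\tau},\ep^{-\tau}]$ (where $|V''|\le C\ep^{-\alpha\tau}$, $\alpha=2n-2$) and its complement (handled by dominated convergence with a fixed Gaussian kernel), choosing $\tau\in(0,\alpha^{-1})$ so that $\ep^{-\alpha\tau+1}\to 0$. You instead invoke the stronger fact that $(1+\langle y\rangle^{2n-2})\tilde g_t(y)$ is bounded, which follows from the same hypoelliptic estimate~\eqref{eq:137} since $g(t,\cdot,\cdot)\le C M^{1/2}$ gives $\tilde g_t(q)\le C\exp\{-cV(q)\}$. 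This lets you avoid the domain splitting entirely and reach $\int w_\ep(z)|z|\,\mathrm d z=\ep\sqrt{2/\pi}$ in one step. Your route is cleaner; the paper's route is marginally more frugal in what it assumes about $\tilde g_t$.
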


\begin{proof}
  The claim is trivial under Assumption~(G). Let us then consider Assumption~(NG). The particles being identically distributed, we
  only have to show that $\mean{\left|V'(q_1(t))-V'(x)\right|w_{\ep}(x-q_1(t))}\rightarrow0$ as $\ep\rightarrow 0$. We
  use~\eqref{eq:137} to deduce that $f_q\in L^{\infty}(\mathbb{R})$, where $f_{q}$ is the probability density function of
  $q_1(t)$. We set $\alpha:=2n-2\geq 0$, where $n$ is given in Assumption~(NG). In addition, we set
  $D_{\tau}(\ep):=[-\ep^{-\tau},+\ep^{-\tau}]$ for some $\tau\in(0,\alpha^{-1})$ whenever $\alpha>0$, or for some $\tau>0$ when
  $\alpha=0$. We compute
  \begin{align}
    \label{eq:801}
    & \mean{\left|V'(q_1(t))-V'(x)\right|w_{\ep}(x-q_1(t))} =\int_{\mathbb{R}}{\left|V'(y)-V'(x)\right|
      w_{\ep}(x-y)f_q(y)\m y}\nonumber\\
    & \quad \leq C\int_{D_{\tau}(\ep)}{\left|V'(y)-V'(x)\right|w_{\ep}(x-y)\m y}+C\int_{D^c_{\tau}(\ep)}{\left|V'(y)-V'(x)\right|
      w_{\ep}(x-y)\m y}.
\end{align}
We notice that $w_{\ep}(x-y)\leq C(x,\tau)w_{\tilde{\ep}}(x-y)$ for all $y\in D^c_{\tau}(\ep)$, the complement of $D_{\tau}(\ep)$,
where $0<\ep\leq \tilde{\ep}:=(|x|+1)^{-1/\tau}$. Moreover, Assumption~(NG) implies that
$\left|V'(y)\right|\leq C(\alpha)(1+|y|^{\alpha+1})$ and $\left|V''(y)\right|\leq C(\alpha)(1+|y|^{\alpha})$, for all
$y\in\mathbb{R}$. With respect to~\eqref{eq:801}, we bound the integral on $D_{\tau}(\ep)$ by using the mean-value theorem and the
control on $V''$, and we bound the integral on $D^c_{\tau}(\ep)$ by relying on the kernel $w_{\tilde{\ep}}$ and the control on
$V'$. We obtain
\begin{align}
  \label{eq:800}
  & \mean{\left|V'(q_1(t))-V'(x)\right|w_{\ep}(x-q_1(t))} \nonumber\\
  &  \quad \leq C\ep^{-\alpha\tau}\int_{D_{\tau}(\ep)}{\left|y-x\right|w_{\ep}(x-y)\m 
    y}+C(x,\tau,\alpha)\int_{D^c_{\tau}(\ep)}{(1+|y|^{\alpha+1})w_{\ep}(x-y)\m y}\nonumber\\
  & \quad \leq C\ep^{-\alpha\tau+1}+C(x,\tau,\alpha)\int_{D^c_{\tau}(\ep)}{(1+|y|^{\alpha+1})w_{\tilde{\ep}}(x-y)\m y},
\end{align}
where we have used Lemma~\ref{la:5} in the last inequality. The right-hand-side of~\eqref{eq:800} tends to $0$ as
$\ep\rightarrow 0$ due the choice of $\tau$ and the dominated convergence theorem. This concludes the proof.
\end{proof}

The approximations discussed above yield the system of equations
\begin{subequations}
\label{eq:300}
\begin{empheq}[left={}\empheqlbrace]{align}
  \,\,\displaystyle\frac{\partial \rho_{\ep}}{\partial t}(x,t) & = -\frac{\partial j_{\ep}}{\partial x}(x,t),\label{eq:300a}\\
  \,\,\displaystyle\frac{\partial j_{\ep}}{\partial t}(x,t) & = 
  -\gamma j_{\ep}(x,t)-\left(\frac{\sigma^2}{2\gamma}\right)\frac{\partial \rho_{\ep}}{\partial x}(x,t)-V'(x)\rho_{\ep}(x,t)
  +\frac{\sigma}{\sqrt{N}}\sqrt{\rho_{\ep}(x,t)}\,\tilde{\xi}_{\ep}, \label{eq:300b}\\
  \,\, \displaystyle\rho_{\ep}(x,0) & =\rho_0(x),\quad j_{\ep}(x,0)=j_0(x)\nonumber,
  \end{empheq}
\end{subequations}
where $ x\in D,\,t\in[0,T]$, and $\tilde{\xi}_{\ep}=Q^{1/2}_{\sqrt{2}\ep}\xi$ is an $L^2(D)$-valued $Q$-Wiener process, and
$\rho_0$, $j_0$ are suitable initial conditions. System~\eqref{eq:300} is one step away from being our regularised Dean--Kawasaki
model. This final step is illustrated in the final section, as the need for it shows while trying to establish existence of
solutions to~\eqref{eq:300}.


\section{Mild solutions to the regularised Dean--Kawasaki model in a periodic setting}
\label{s:5}

We investigate existence and uniqueness of mild solutions to system~\eqref{eq:420}, which we refer to as a \emph{regularised
  Dean--Kawasaki model}. System~\eqref{eq:420} is the $2\pi$-periodic equivalent of~\eqref{eq:300}.  The reason for considering
the spatially periodic case will be discussed below. Note that the quantities $\rho_{\ep}$, $j_{\ep}$ in~\eqref{eq:300}
and~\eqref{eq:420} are no longer associated with the definitions given in~\eqref{eq:102} but are the unknown solutions to the two
systems.

We rewrite~\eqref{eq:300} as a stochastic partial differential equation of the type
\begin{equation}
  \label{eq:301}
  \left\{
    \begin{array}{l}
      \m X_{\ep}(t)=(AX_{\ep}(t)+\alpha X_{\ep}(t))\m t+B_{N}(X_{\ep}(t))\m W_{\ep}, \\
      X_{\ep}(0)=X_0,
    \end{array}
  \right.
\end{equation}
where $X_{\ep}(t):=(\rho_{\ep}(\cdot,t),j_{\ep}(\cdot,t))$, $X_0=(\rho_0,j_0)$, and $W_{\ep}:=(W_{\ep,1},W_{\ep,2})$ is a suitable
stochastic noise, with
\begin{equation*}
  AX_{\ep}(t):=\left(-\frac{\partial j_{\ep}}{\partial x}(\cdot,t),\,-\gamma j_{\ep}(\cdot,t)-\left(\frac{\sigma^2}{2\gamma}\right)\frac{\partial \rho_{\ep}}{\partial x}(\cdot,t)\right),\qquad 
  \alpha X_{\ep}(t):=\left(0,-V'(\cdot)\rho_{\ep}(\cdot,t)\right),
\end{equation*}
and $B_{N}$ is some suitable integrand specified below.

Subsection~\ref{ss:10} is devoted to the analysis of the operator $A$ by means of the $C_0$-semigroup theory. We define and
analyse the periodic equivalents $W_{\per,\ep}$ and $\alpha_{\per}$ of $W_{\ep}$ and $\alpha$ in Subsection~\ref{ss:40}.  We
describe the relevant properties of the stochastic integrand $B_N$ in Subsection~\ref{ss:81}, and prove existence and uniqueness
of mild solutions to a suitable locally Lipschitz approximation of~\eqref{eq:420} in Subsection~\ref{ss:50}. We then prove
suitable small-noise regime estimates in Subsections~\ref{ss:51} and~\ref{ss:52}. We finally prove the main existence and
uniqueness result, Theorem~\ref{thm:100}, in Subsection~\ref{ss:100}.

In this section, we set $D:=[0,2\pi]$. We fix $k_{B}T_e=\sigma^2/(2\gamma):=1$ for notational simplicity, even though all our
conclusions hold for arbitrary positive ratio $\sigma^2/(2\gamma)$.


\subsection{Semigroup analysis for the operator $A$ in $\mathcal{W}=H^1_{\per}(D)\times H^1_{\per}(D)$}
\label{ss:10}

We characterise the semigroup associated with the operator $A$, which can be done in a straightforward manner. For any
$2\pi$-periodic function $f\colon \mathbb{R}\rightarrow \mathbb{R}$ such that $f|_D\in L^2(D)$, we write its Fourier coefficients
as $\hat{f}_m:=(2\pi)^{-1}\int_{D}{e^{-imx}f(x)\m x}$, for any $m\in\mathbb{Z}$. We consider the Sobolev spaces of $2\pi$-periodic
functions
\begin{align*}
  H^n_{\per}(D) & := \left\{f=\sum_{m\in\mathbb{Z}}{\hat{f}_me^{-imx}}\colon 
                  \sum_{m\in\mathbb{Z}}{\left(1+m^2\right)^n \hat{f}_m^2}<\infty\right\},\quad n\in\mathbb{N},
\end{align*}
endowed with standard norms and inner products. We also consider the spaces 
\begin{align*}
  C^n_{\per}(D) & := \left\{f : f\in C^n(\mathbb{R}), f\mbox{ is periodic with period $2\pi$}\right\},
                  \quad n\in\mathbb{N}\cup\{0\},
\end{align*}
where $C^{0}_{\per}(D)$ is endowed with its standard norm. We also recall the following Sobolev embedding theorem, valid only in
one space dimension.
\begin{prop}
  \label{p:10}
  The embedding $H_{\per}^1(D)\subset C^0_{\per}(D)$ is continuous.
\end{prop}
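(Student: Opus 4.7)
The plan is to exploit directly the Fourier-series definition of $H^1_{\per}(D)$ given in the excerpt. The key observation is that the weights $(1+m^2)^{-1}$ are summable over $\mathbb{Z}$, so by Cauchy--Schwarz the Fourier coefficients of any $f\in H^1_{\per}(D)$ are absolutely summable, which will yield both continuity of $f$ (via uniform convergence of its Fourier series) and the desired norm estimate.

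Concretely, I would first write, for $f\in H^1_{\per}(D)$ with Fourier coefficients $\hat f_m$,
\begin{equation*}
\sum_{m\in\mathbb{Z}}|\hat f_m|
=\sum_{m\in\mathbb{Z}}\frac{1}{\sqrt{1+m^2}}\,\sqrt{1+m^2}\,|\hat f_m|
\le\left(\sum_{m\in\mathbb{Z}}\frac{1}{1+m^2}\right)^{\!1/2}\!\|f\|_{H^1_{\per}(D)}=: C_0\,\|f\|_{H^1_{\per}(D)},
\end{equation*}
with $C_0<\infty$. Next, since each partial sum $S_M(x):=\sum_{|m|\le M}\hat f_m e^{-imx}$ is a trigonometric polynomial, hence continuous and $2\pi$-periodic, and since the Weierstrass M-test (with majorants $|\hat f_m|$) guarantees that $S_M\to f$ uniformly on $\mathbb{R}$, the limit $f$ is itself continuous and $2\pi$-periodic, i.e.\ $f\in C^0_{\per}(D)$. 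The pointwise bound
\begin{equation*}
|f(x)|\le\sum_{m\in\mathbb{Z}}|\hat f_m|\le C_0\,\|f\|_{H^1_{\per}(D)},\qquad x\in D,
\end{equation*}
then delivers $\|f\|_{C^0_{\per}(D)}\le C_0\,\|f\|_{H^1_{\per}(D)}$, which is exactly the continuity of the embedding.

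There is no real obstacle here: the crux is the one-dimensional fact $\sum_m(1+m^2)^{-1}<\infty$, which fails in higher dimensions and accounts for the restriction to $d=1$ mentioned just before the statement. One only needs to be mindful to distinguish the formal Fourier-series function from a representative and to note that uniform convergence upgrades the $L^2$-sum defining $f$ to a genuinely continuous periodic function.
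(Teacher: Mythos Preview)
Your argument is correct. The paper itself does not supply a proof of this proposition; it simply states it as a recalled Sobolev embedding (``We also recall the following Sobolev embedding theorem, valid only in one space dimension''), so there is no proof in the paper to compare against. Your Fourier-series argument via Cauchy--Schwarz and the summability of $(1+m^2)^{-1}$ over $\mathbb{Z}$ is the standard route and is precisely the one-dimensional mechanism the paper alludes to when noting the restriction to $d=1$.
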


As an immediate consequence of Proposition~\ref{p:10}, we deduce that, for $f\in H^n_{\per}(D)$, $n\geq 1$,
\begin{align*}
  \frac{\m^k}{\m x^k}f(0)=\frac{\m^k}{\m x^k}f(2\pi),\qquad\mbox{ for all } k=0,1,\cdots,n-1.
\end{align*}
We also recall the spaces
\begin{align*}
  \mathcal{W}:=H^1_{\per}(D) \times H^1_{\per}(D),
  &\quad \langle (u_1,v_1),(u_2,v_2)\rangle_{\mathcal{W}}                                              
    :=\langle u_1,u_2\rangle_{H_{\per}^1(D)}+\langle v_1,v_2\rangle_{H_{\per}^1(D)}, \\
  \mathcal{W}\supset\mathcal{D}(A):=H^2_{\per}(D)\times H^2_{\per}(D),
  &\quad \langle (u_1,v_1),(u_2,v_2)\rangle_{\mathcal{D}(A)}:=\langle u_1,u_2\rangle_{H_{\per}^2(D)}+\langle v_1,v_2\rangle_{H_{\per}^2(D)}.
\end{align*}

\begin{lemma}\label{lem:30}
  The operator $A\colon\mathcal{D}(A)\subset\mathcal{W}\rightarrow \mathcal{W}$ defines a $C_0$-semigroup of contractions
  $\{S(t)\}_{t\geq 0}$.
\end{lemma}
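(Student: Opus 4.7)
The plan is to apply the Lumer--Phillips theorem (see, e.g., Pazy) to conclude that $A$ generates a $C_0$-semigroup of contractions on $\mathcal{W}$. Three ingredients are needed: (i) density of $\mathcal{D}(A)$ in $\mathcal{W}$; (ii) dissipativity of $A$; (iii) the range condition $\mathrm{Range}(\lambda I - A)=\mathcal{W}$ for some $\lambda>0$. Item (i) is immediate, since trigonometric polynomials are in $\mathcal{D}(A)=H^2_{\per}(D)\times H^2_{\per}(D)$ and are dense in $H^1_{\per}(D)\times H^1_{\per}(D)$.

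For (ii), I would compute $\langle AX, X\rangle_{\mathcal{W}}$ directly for $X=(\rho,j)\in\mathcal{D}(A)$. Unpacking the two $H^1_{\per}$ inner products into their $L^2$ and derivative pieces, and integrating by parts (so that the boundary terms at $0$ and $2\pi$ cancel thanks to periodicity), the off-diagonal skew-symmetric terms cancel pairwise, leaving only the contribution of the $-\gamma j$ term. Concretely, one expects
\begin{equation*}
\langle AX, X\rangle_{\mathcal{W}} = -\gamma\,\|j\|_{H^1_{\per}(D)}^2 \leq 0,
\end{equation*}
so $A$ is dissipative.

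For (iii), given $(f,g)\in\mathcal{W}$ and $\lambda>0$, I would solve the resolvent equation $(\lambda I - A)(\rho,j)=(f,g)$, i.e.
\begin{equation*}
\lambda\rho + j_x = f, \qquad (\lambda+\gamma)j + \rho_x = g,
\end{equation*}
by eliminating $j$ to obtain $\rho_{xx}-\lambda(\lambda+\gamma)\rho = (\lambda+\gamma)f - g_x$, and then recovering $j=(\lambda+\gamma)^{-1}(g-\rho_x)$. The cleanest way to check that the unique periodic solution lies in $\mathcal{D}(A)$ is to pass to Fourier series: the $m$-th mode of $\rho$ is $\hat\rho_m=((\lambda+\gamma)\hat f_m - im\hat g_m)/(m^2+\lambda(\lambda+\gamma))$, and since the denominator is bounded below by $\lambda(\lambda+\gamma)>0$ and grows like $m^2$, a direct bookkeeping of the weights $(1+m^2)^2$ gives $\rho\in H^2_{\per}(D)$ from $(f,g)\in\mathcal{W}$; the formula for $\hat j_m$ gives $j\in H^2_{\per}(D)$ in the same way.

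The main technical obstacle is really the careful Fourier-side estimate in (iii), ensuring the resolvent actually maps into $\mathcal{D}(A)$ (and not just into $\mathcal{W}$); the cancellations in (ii) are routine once the integrations by parts are organised around periodicity. With (i)--(iii) in hand, the Lumer--Phillips theorem delivers a $C_0$-semigroup of contractions $\{S(t)\}_{t\geq 0}$ generated by $A$.
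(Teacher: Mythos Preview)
Your proof is correct, but it takes a different route from the paper's. The paper verifies the Hille--Yosida theorem directly: it checks that $A$ is closed with dense domain, then establishes the resolvent bound $\|(\lambda I - A)^{-1}\|_{\mathcal{L}(\mathcal{W},\mathcal{W})}\leq 1/\lambda$ by multiplying the four scalar equations $\lambda\rho=-a-j'$, $(\lambda+\gamma)j=-b-\rho'$, and their $x$-derivatives by $\rho,j,\rho',j'$ respectively, integrating over $D$, and using periodicity and Young's inequality. Surjectivity is obtained only on the dense subspace $H^2_{\per}(D)\times H^1_{\per}(D)$ via elliptic regularity for $-\lambda^{-1}j''+(\lambda+\gamma)j=\lambda^{-1}a'-b$, and then closedness of $A$ upgrades this to full surjectivity.

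Your Lumer--Phillips argument is a genuine alternative. The dissipativity computation $\langle AX,X\rangle_{\mathcal{W}}=-\gamma\|j\|^2_{H^1_{\per}}$ is cleaner than the paper's resolvent estimate (and in fact the paper's energy identity is essentially the same integration by parts, just organised around $\lambda I-A$ rather than $A$). For the range condition, your Fourier diagonalisation is more explicit than the paper's appeal to elliptic theory: the closed formula $\hat j_m=(\lambda\hat g_m-im\hat f_m)/(m^2+\lambda(\lambda+\gamma))$, symmetric to your $\hat\rho_m$, gives $j\in H^2_{\per}$ from $f\in H^1_{\per}$ and $g\in L^2_{\per}$ by the same weight-counting, so surjectivity onto $\mathcal{W}$ (with values in $\mathcal{D}(A)$) is immediate without passing through a dense subspace. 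One small point: Lumer--Phillips in Pazy's formulation does not require you to verify closedness of $A$ separately, so your omission of that step is fine.
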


\begin{proof}
We verify the assumptions of the Hille--Yosida Theorem, as stated in~\cite[Theorem 3.1]{Pazy1983a}. This is a straightforward
step, and might be skipped on a first reading.


\emph{$A$ is a closed operator, and $\mathcal{D}(A)$ is dense in $\mathcal{W}$.} This is easily checked. 

\emph{The resolvent set of $A$ contains the positive half line.} For every $\lambda>0$, we consider
$A^{-1}_{\lambda}:=(A-\lambda I)^{-1}$, whenever this is well-defined. We first prove that it exists, by showing injectivity of
$A_{\lambda}:=A-\lambda I$. Let then assume that $A_{\lambda}(\rho,j)=(0,0)$. We multiply the first component of
$A_{\lambda}(\rho,j)$ by $\rho$ and the second component of $A_{\lambda}(\rho,j)$ by $j$, and we obtain
\begin{equation*}
  (-j'-\lambda\rho)\rho+(-(\lambda+\gamma)j-\rho')j=-\lambda\rho^2-(\lambda+\gamma)j^2-(\rho j)'=0.
\end{equation*}
Integrating over $D$ and using the periodic boundary conditions for $\rho$ and $j$, we obtain
\begin{equation*}
  \lambda\|\rho\|^2_{L^2(D)}+(\lambda+\gamma)\|j\|^2_{L^2(D)}=0.
\end{equation*}
Since $\lambda,\gamma>0$, we deduce that $(\rho,j)=(0,0)$. We now show that $A^{-1}_{\lambda}$ is a bounded operator. Consider
$A^{-1}_{\lambda}(a,b)=(\rho,j)$. This implies
\begin{align}
  \lambda \rho & =-a-j',\label{eq:400}\\
  (\lambda+\gamma)j & = -b-\rho',\label{eq:401}\\
  \lambda \rho' & =-a'-j'',\label{eq:402}\\
  (\lambda+\gamma)j' & = -b'-\rho'',\label{eq:403} 
\end{align}
where~\eqref{eq:402} (respectively~\eqref{eq:403}) is obtained by differentiating~\eqref{eq:400} (respectively~\eqref{eq:401}). We
multiply~\eqref{eq:400} by $\rho$,~\eqref{eq:401} by $j$,~\eqref{eq:402} by $\rho'$,~\eqref{eq:403} by $j'$, and sum the four
equalities. An integration of the resulting expression over $D$ yields
\begin{align}
  \label{eq:310}
  \lambda\|(\rho,j)\|^2_{\mathcal{W}}\leq \lambda\|\rho\|^2_{H^1_{\per}(D)}+(\lambda+\gamma)\|j\|^2_{H^1_{\per}(D)}
  =\int_{D}{-a\rho} \m x +\int_{D}{-bj} \m x +\int_{D}{-a'\rho'} \m x +\int_{D}{-b'j'} \m x,
\end{align}
where we have also used the periodic boundary conditions for $\rho$, $j$, $\rho'$, $j'$. We now use the Cauchy-Schwartz inequality
and the Young inequality $|xy|\leq \theta^2 x^2+(1/4\theta^2)y^2$ with $\theta^2:=\lambda/2$ to bound the four integrals in the
right-hand-side of~\eqref{eq:310}. This directly gives
$(\lambda/2)\|(\rho,j)\|^2_{\mathcal{W}}\leq (1/2\lambda)\|(a,b)\|^2_{\mathcal{W}}$, which implies
\begin{align}
  \label{eq:312}
  \|A^{-1}_{\lambda}\|_{\mathcal{L}(\mathcal{W},\mathcal{W})}\leq \frac{1}{\lambda},
\end{align}
so $A^{-1}_{\lambda}$ is bounded. We now show that $\mbox{Dom}(A^{-1}_{\lambda})$ is dense in $\mathcal{W}$. Let us fix
$(a,b)\in H^2_{\per}(D)\times H^1_{\per}(D)$. 
We consider the system of equations $A_{\lambda}(\rho,j)=(a,b)$, namely
\begin{align*}
  -j'-\lambda\rho = a, \qquad -(\lambda+\gamma)j-\rho' = b.
\end{align*}
We rewrite the first equation as $\rho=(-j'-a)/\lambda$ and substitute into the second equation, obtaining
\begin{align}
  \label{eq:315}
  -\frac{j''}{\lambda}+(\lambda+\gamma)j=\frac{a'}{\lambda}-b\in H^1_{\per}(D).
\end{align}
The elliptic theory provides existence of a unique solution $j\in
H^3_{\per}(D)$ for~\eqref{eq:315}. From $\rho:=(-j'-a)/\lambda$, we immediately deduce that $\rho\in
H^2_{\per}(D)$. We have shown that, for every $(a,b)$ in a dense subset of $\mathcal{W}$ (namely $H^2_{\per}(D)\times
H^1_{\per}(D)$), the operator $A^{-1}_{\lambda}$ is well-defined.

\emph{Inequality~\cite[(3.1)]{Pazy1983a} is satisfied:} This is precisely~\eqref{eq:312}.
\end{proof}

\subsection{Introducing periodic noise and periodic potential drift}
\label{ss:40}

We now define the noise $W_{\ep}$ for~\eqref{eq:301} in accordance with the noise in~\eqref{eq:300b}. We set
\begin{align*}
  \dot{W}_{\ep}:=\left(0,\,\tilde{\xi}_{\ep}\right)=\left(0,\,Q^{1/2}_{\sqrt{2}\ep}\xi\right).
\end{align*} 
The second component of $\dot{W}_{\ep}$ agrees with the noise in~\eqref{eq:300b}. Since~\eqref{eq:300a} is a deterministic
equation, we set the first component of $\dot{W}_{\ep}$ to zero.  We represent $W_{\ep}$ as~\cite[Proposition 2.1.10]{Prevot2007a}
\begin{align}
  \label{eq:415}
  W_{\ep}=\sum_{j=1}^{\infty}{\sqrt{\lambda_j}(0,e_j)\beta_j(t)},
\end{align} 
where $\{e_j\}_{j}$ and $\{\lambda_j\}_j$ refer to the families of eigenfunctions and eigenvalues of the Hilbert-Schmidt integral
operator $Q_{\sqrt{2}\ep}$ on $L^2(D)$. Unfortunately, the eigenfunctions $\{e_j\}_j$ are not $2\pi$-periodic. To verify this, one
can rely on Mercer's Theorem and evaluate the kernel expansion $w_{\sqrt{2}\ep}(x-y)=\sum_{j=1}^{\infty}{\lambda_je_j(x)e_j(y)}$
for the pairs $(x,y)=(0,0)$ and $(x,y)=(0,2\pi)$. We deduce that the $Q$-Wiener process $W_{\ep}$ does \emph{not} necessarily take
values in the space associated with the semigroup analysis of $A$, i.e., in $\mathcal{W}$. In order to resolve this issue, we
identify the end-points of the interval $[0,2\pi]$, thus thinking of $[0,2\pi]$ as a flat torus. We provide, for each $\ep>0$, a
$2\pi$-periodic kernel $p_{\sqrt{2}\ep}$ approximating $w_{\sqrt{2}\ep}$. A suitable choice lies in the von Mises distribution, a
$2\pi$-periodic distribution parametrised by $\mu\in\mathbb{R}$, $\kappa>0$, and given by the probability density function
\begin{align*}
  f(x,\mu,\kappa)=\frac{e^{\kappa\cos(x-\mu)}}{2\pi I_0(\kappa)},\qquad I_0(\kappa):=\frac{1}{2\pi}\int_{D}{e^{\kappa\cos(x)}\m x}.
\end{align*}
The von Mises distribution~\cite{Forbes2011a} approximates the Gaussian kernel in the following way
\begin{align*}
  \lim_{\kappa\rightarrow +\infty}{\left\|f(x,\mu,\kappa)-\frac{1}{\sqrt{2\pi\sigma^2}}\exp\left\{-\frac{(x-\mu)^2}{2\sigma^2}\right\}
  \right\|_{C^0(\mu-\pi,\,\mu+\pi)}}=0,\quad\mbox{where }\sigma^2:=\kappa^{-1}.
\end{align*}
For this reason, we replace the kernel $w_{\sqrt{2}\ep}$, $\ep>0$, with the $2\pi$-periodic kernel
\begin{align*}
  p_{\sqrt{2}\ep}(x):=f\left(x,0,(2\ep^2)^{-1}\right)=\frac{e^{\frac{\cos(x)}{2\ep^2}}}
  {2\pi I_0(1/(2\ep^2))}=Z^{-1}_{\sqrt{2}\ep}e^{-\frac{\sin^2(x/2)}{\ep^2}},\qquad Z^{-1}_{\sqrt{2}\ep}
  :=\frac{e^{\frac{1}{2\ep^2}}}{2\pi I_0(1/(2\ep^2))}.
\end{align*} 
In the limit $\ep\rightarrow 0 $, the kernel $p_{\sqrt{2}\ep}$ recovers the Gaussian kernel $w_{\sqrt{2}\ep}$ on the flat torus.
We study the eigenfunctions and eigenvalues of the operator
\begin{align}
  \label{eq:1500}
  P_{\sqrt{2}\ep}\colon L^2(D)\rightarrow L^2(D),\qquad P_{\sqrt{2}\ep}f(x)=\int_{D}{p_{\sqrt{2}\ep}(x-y)f(y)\m y},\quad f\in L^2(D).
\end{align}
We obtain the eigenfunctions $\{e_{j,\ep}\}_{j\in\mathbb{Z}}$ and eigenvalues $\{\lambda_{j,\ep}\}_{j\in\mathbb{Z}}$ of
$P_{\sqrt{2}\ep}$ from~\cite[Section 4.2]{Duran-Olivencia2017a}, namely
\begin{align*}
  e_{j,\ep}(x) = e_{j}(x) =
  \begin{cases}
    \sqrt{\frac{1}{\pi}}\cos(jx), & \mbox{if } j>0, \vspace{0.3 pc}\\
    \sqrt{\frac{1}{\pi}}\sin(jx), & \mbox{if } j<0, \vspace{0.3 pc}\\
    \sqrt{\frac{1}{2\pi}}, & \mbox{if } j=0,
  \end{cases}                   
\end{align*}
and
\begin{align}
 \label{eq:408}
  \lambda_{j,\ep} 
  & =
    \begin{cases}
      \displaystyle Z^{-1}_{\sqrt{2}\ep}\int_{D}{e^{-\frac{\sin^2(x/2)}{\ep^2}}\cos(jx)
      \m x}=C_2Z^{-1}_{\sqrt{2}\ep}e^{-\frac{1}{2\ep^2}}I_j\left(\{2\ep^2\}^{-1}\right), 
      & \mbox{if } j\neq 0, \vspace{0.3 pc}\\
      1, & \mbox{if } j=0, 
    \end{cases}
\end{align}
where $I_j(z):=(2\pi)^{-1}\int_{D}{e^{z\cos(x)}\cos(jx)\m x}$ is the \emph{modified Bessel function} of first kind and order $j$,
see~\cite[Eq.~(9.6.19)]{Abramowitz1964a}. It is immediate to notice that $\{e_j\}_j$ is an orthogonal basis of $H^1_{\per}(D)$,
and that the family $\{f_j\}_{j\in\mathbb{Z}}$
\begin{align}
  \label{eq:411}
  f_j(x) =
  \left\{
  \begin{array}{ll}
    e_j(x)/\sqrt{1+j^2}, & \mbox{if } j\neq 0, \vspace{0.3 pc}\\
    \sqrt{\frac{1}{2\pi}}, & \mbox{if } j=0,
  \end{array}
  \right.
\end{align}
is an orthonormal basis of $H^1_{\per}(D)$. This is crucial, as it will allow us to construct a $\mathcal{W}$-valued noise below.

We now turn to estimating relevant properties of $\{\lambda_{j,\ep}\}_j$.

\begin{lemma}
  \label{lem:20}
  Fix $n\in\mathbb{N}$. There exists $\ep_0>0$ such that for $0<\ep<\ep_0$ we have
  $\sum_{j\in\mathbb{Z}}{\lambda_{j,\ep}|j|^n}\leq C(n)\ep^{-(2n+3)}$.
\end{lemma}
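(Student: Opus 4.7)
The plan is to rewrite $\lambda_{j,\ep}$ in closed form in terms of modified Bessel functions and then bound the resulting series by splitting at a threshold proportional to $z:=1/(2\ep^2)$. First I would simplify formula~\eqref{eq:408}. Combining $Z^{-1}_{\sqrt{2}\ep}=e^{1/(2\ep^2)}/(2\pi I_0(1/(2\ep^2)))$ with the expression for $\lambda_{j,\ep}$ (and noting that the unnamed constant $C_2=2\pi$, since $\int_D e^{z\cos x}\cos(jx)\,\mathrm{d}x=2\pi I_j(z)$ by the paper's definition of $I_j$), I obtain the clean identity
\[
\lambda_{j,\ep}=\frac{I_j(z)}{I_0(z)},\qquad z:=\frac{1}{2\ep^2},\qquad j\in\mathbb{Z},
\]
consistent with $\lambda_{0,\ep}=1$. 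In particular $\lambda_{j,\ep}\in[0,1]$ and $\lambda_{j,\ep}=\lambda_{-j,\ep}$, so it suffices to control $\sum_{j\geq 1} j^n \lambda_{j,\ep}$.

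Next I would invoke three elementary facts about modified Bessel functions for $z$ large: (a) the uniform bound $|I_j(z)|\leq I_0(z)$, which follows from the integral representation $I_j(z)=\pi^{-1}\int_0^\pi e^{z\cos\theta}\cos(j\theta)\,\mathrm{d}\theta$ and $|\cos(j\theta)|\leq 1$; (b) the series bound $I_j(z)\leq e^z(z/2)^{|j|}/|j|!$ valid for $|j|\geq 1$; (c) the standard asymptotic $I_0(z)\geq c\,e^z/\sqrt{z}$ for $z\geq 1$.

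Then I would split the sum at the threshold $J:=\lceil z\rceil$. On the bulk regime $0<|j|\leq J$, fact (a) gives the crude estimate
\[
\sum_{0<|j|\leq J}|j|^n\lambda_{j,\ep}\leq 2J^{n+1}\leq C(n)\,z^{n+1}\leq C(n)\,\ep^{-2(n+1)}.
\]
For the tail $|j|>J$, I would combine (b), (c) and Stirling's formula to bound $I_j(z)/I_0(z)\leq C\sqrt{z}\bigl(ez/(2|j|)\bigr)^{|j|}$, which decays super-exponentially in $|j|$ once $|j|>z$, so that $\sum_{|j|>J}|j|^n \lambda_{j,\ep}$ is bounded by an $\ep$-independent constant (the polynomial factor $|j|^n$ is harmless against super-exponential decay). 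Putting the two estimates together yields
\[
\sum_{j\in\mathbb{Z}}\lambda_{j,\ep}|j|^n\leq C(n)\,\ep^{-2(n+1)}\leq C(n)\,\ep^{-(2n+3)}
\]
for all $\ep\leq\ep_0$, with $\ep_0$ chosen so that $z(\ep_0)\geq 1$.

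The hardest part is really just the Bessel bookkeeping in (b)–(c); once these are in hand, the splitting is routine. The announced bound $\ep^{-(2n+3)}$ has some slack built in (the natural bound from the argument above is $\ep^{-(2n+2)}$, and a sharper analysis via differentiation of the generating function $e^{z\cos\theta}=\sum_j I_j(z)e^{ij\theta}$ would yield the essentially optimal $\ep^{-(n+1)}$), so only crude asymptotics are needed.
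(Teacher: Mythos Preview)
Your strategy is the same as the paper's: write $\lambda_{j,\ep}=I_j(z)/I_0(z)$ with $z=1/(2\ep^2)$, split at a threshold of order $z$, and bound bulk and tail separately. Your bulk bound is in fact sharper than the paper's: you use $\lambda_{j,\ep}=I_j(z)/I_0(z)\leq 1$ directly, whereas the paper bounds numerator and denominator separately ($I_j(z)\leq Ce^z$ and $I_0(z)\geq Ce^z/\sqrt z$), obtaining only $\lambda_{j,\ep}\leq C\ep^{-1}$; this is why you reach $\ep^{-(2n+2)}$ while the paper states $\ep^{-(2n+3)}$.

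There is, however, a slip in your tail estimate. The Stirling bound $\lambda_{j,\ep}\leq C\sqrt z\,(ez/(2j))^{j}$ is correct, but it does \emph{not} decay immediately past $j=z$: at $j=z$ it equals $C\sqrt z\,(e/2)^z$, which blows up, and it only drops below $1$ once $j>ez/2$. Hence $\sum_{|j|>\lceil z\rceil}|j|^n\lambda_{j,\ep}$ is not bounded by an $\ep$-independent constant as you claim; using $\lambda_{j,\ep}\leq 1$ on the intermediate range $z<j\leq ez/2$ still gives a contribution of order $z^{n+1}$. The fix is trivial and does not affect your final bound: move the threshold to $J'=\lceil ez\rceil$ (bulk still $\leq C(n)z^{n+1}$, tail now genuinely $o(1)$), or simply absorb the intermediate range into the bulk. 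The paper sidesteps this by using the recursion $I_{j+1}(z)=I_{j-1}(z)-(2j/z)I_j(z)$, which gives the clean geometric decay $\lambda_{j,\ep}\leq C\ep^{-1}(1/2)^{j-\lceil z\rceil}$ for all $j>z$.
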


\begin{proof}
  We start with bounding $Z_{\sqrt{2}\ep}$ from below as
\begin{align}
  \label{eq:406}
  Z_{\sqrt{2}\ep}=\int_{D}{e^{-\frac{\sin^2(x/2)}{\ep^2}}\m x}\geq \int_{D}{e^{-\frac{x^2}{4\ep^2}}\m x}
  \geq \int_{0}^{\sqrt{4\ep^2\ln 2}}{(1/2)\m x}=C\ep.
\end{align}

We now turn to $I_j$. We first of all notice that $I_1(z)\leq I_0(z)$ for any $z\geq 0$. In addition, we have
\begin{align*}
  I_0(z)=(2\pi)^{-1}\int_{D}{e^{z\cos(x)}\m x}\leq \int_{D}{e^{z}\m x}=Ce^z.
\end{align*} 
We use a recursive property of the modified Bessel functions of first kind~\cite[Eq.~(9.6.26)]{Abramowitz1964a}, namely
\begin{align}
  \label{eq:405}
  I_{j+1}(z)=I_{j-1}(z)-\frac{2j}{z}I_{j}(z),\qquad \mbox{ for all } z>0,\,\mbox{ for all } j\in \mathbb{N}.
\end{align}
Since the modified Bessel functions of first kind are always non-negative for non-negative
arguments~\cite[Eq.~(9.6.10)]{Abramowitz1964a}, we deduce from~\eqref{eq:405} that $I_j(z)\leq (z/2j)I_{j-1}(z)$. For $j>z$, we
have $I_j(z)\leq (1/2)I_{j-1}(z)$, which implies an exponential decay of $I_j(z)$ for $j>z$. Since $I_1(z)\leq I_0(z)$,
equality~\eqref{eq:405} also implies that $I_{j}(z)\leq I_0(z)$,$\mbox{ for all } j\in\mathbb{N}$. To sum up, we get the bounds
\begin{align}
  \label{eq:407}
  I_j(z) \leq
  \begin{cases}
    Ce^z & \mbox{if } j\leq z, \\
    Ce^z\left(\frac{1}{2}\right)^{j-z}, & \mbox{if } j>z.
  \end{cases}
\end{align} 
We take $z=(2\ep^2)^{-1}$, and we set $m(\ep):=\lceil (2\ep^2)^{-1}\rceil$. We feed~\eqref{eq:406} and~\eqref{eq:407}
into~\eqref{eq:408}, thus obtaining
\begin{align}
  \label{eq:409}
  \lambda_{j,\ep} \leq
     \begin{cases}
     C\ep^{-1}, & \mbox{if } j\leq m(\ep),\\
     C\ep^{-1}\left(\frac{1}{2}\right)^{j-m(\ep)}, & \mbox{if } j>m(\ep),
     \end{cases}
\end{align} 
where $C$ is a constant independent of $\ep$.  As a result of~\eqref{eq:409} we get, for $\ep$ sufficiently small,
\begin{align*}
  \frac{1}{2}\sum_{j\in\mathbb{Z}}{\lambda_{j,\ep}|j|^n}\leq\sum_{j=0}^{\infty}{\lambda_{j,\ep}j^n} 
  & =\sum_{j=0}^{m(\ep)}{\lambda_{j,\ep}j^n}+\sum_{j>m(\ep)}{\lambda_{j,\ep}j^n} \leq C(n)\ep^{-1}m(\ep)^{(n+1)}\\
  & \quad + C(n)\ep^{-1}\sum_{j>m(\ep)}{(1/2)^{j-m(\ep)}\left\{(j-m(\ep))^n+m(\ep)^{n}\right\}}\leq C(n)\ep^{-(2n+3)},
\end{align*}
and the proof is complete.
\end{proof}

These considerations show that the noise $\dot{W}_{\ep}$ given in~\eqref{eq:415} can be replaced, in a periodic setting, by the
noise $\dot{W}_{\per,\ep}=(0,\tilde{\xi}_{\per,\ep}):=(0,P^{1/2}_{\sqrt{2}\ep}\xi)$, where $P$ is defined in~\eqref{eq:1500}. This
noise is a $\mathcal{W}$-valued $Q$-Wiener process given by
\begin{align}
  W_{\per,\ep}=\sum_{j\in\mathbb{Z}}{\sqrt{\alpha_{j,\ep}}(0,f_j)}\beta_j,\qquad \alpha_{j,\ep}:=(1+j^2)\lambda_{j,\ep},
\end{align}
where $\{\beta_j\}_j$ is a family of independent one-dimensional standard Brownian motions. For consistency, we assume $V$ is
periodic, i.e., $V = V_{\per}\in C^2_{\per}(D)$. It is also immediate to notice that the operator
$\alpha_{\per}X_{\ep}(t):=\left(0,-V_{\per}'(\cdot)\rho_{\ep}(\cdot,t)\right)$ belongs to $L(\mathcal{W})$, i.e., to the set of
bounded linear operators on $\mathcal{W}$.

In the remaining of the paper, we investigate existence and uniqueness of solutions to the \emph{regularised Dean--Kawasaki model}
\begin{equation}
  \label{eq:501}
  \left\{
    \begin{array}{l}
      \m X_{\ep}(t)=(AX_{\ep}(t)+\alpha_{\per}X_{\ep}(t))\m t+B_{N}(X_{\ep}(t))\m W_{\per,\ep}, \\
      X_{\ep}(0)=X_0.
    \end{array}
  \right.
\end{equation}
System~\eqref{eq:501} is the equivalent of~\eqref{eq:300} in a periodic setting and is a functional rewriting of~\eqref{eq:420}.

\subsection{Locally Lipschitz stochastic integrand with respect to $\mathcal{W}$-topology}
\label{ss:81}


In this subsection, we define and analyse the properties of the noise integrand $B_{N}$. It is natural to define
$B_{N}\colon\mathcal{W}\rightarrow \{f\colon \mathcal{W}\rightarrow L^2(D)\times L^2(D)\}$ as
\begin{align*}
  B_{N}((\rho,j))(a,b):=\frac{\sigma}{\sqrt{N}}\left(0,\, \sqrt{|\rho|}\cdot b\right).
\end{align*}

\begin{rem} 
  We see that
  \begin{align}
    \label{eq:317}
    & \int_{0}^{t}{B_{N}((X(s),Y(s)))\m W_{\per,\ep}(s)} = 
      \int_{0}^{t}{\sum_{j\in\mathbb{Z}}{\sqrt{\alpha_{j,\ep}}B_N((X(s),Y(s)))(0,f_j)\m\beta_j(s)}}\nonumber\\
    & \quad = \frac{\sigma}{\sqrt{N}}\int_{0}^{t}{\sum_{j\in\mathbb{Z}}{\sqrt{\alpha_{j,\ep}}\left(0,\sqrt{|X(s)|}f_j\right)\m\beta_j(s)}} = 
      \left(0,\int_{0}^{t}{\frac{\sigma}{\sqrt{N}}\sqrt{|X(s)|}\m P^{1/2}_{\sqrt{2}\ep}\xi(s)}\right).
  \end{align}
  The last expression of~\eqref{eq:317} is precisely the stochastic noise of~\eqref{eq:501}.
\end{rem}

The integrand $B_N$ poses several difficulties. Firstly, $B_N$ is not a mapping from $\mathcal{W}$ to $L^0_2(\mathcal{W})$, where
$L^0_2(\mathcal{W})$ denotes the set of Hilbert-Schmidt operators from $P^{1/2}_{\sqrt{2}\ep}\mathcal{W}\subset\mathcal{W}$ into
$\mathcal{W}$, see~\cite[Section 2.3]{Prevot2007a}.  Secondly, $B_N$ is not Lipschitz or locally Lipschitz with respect to
$(\rho,j)$. Both problems are due to the singularity of the square-root function. We address both problems by regularising this
singularity. For some $\delta>0$, we define
\begin{align*}
  B_{N,\delta}((\rho,j))(a,b):=\frac{\sigma}{\sqrt{N}}\left(0,\, h_\delta(\rho)\cdot b\right),
\end{align*}
where $h_{\delta}\colon\mathbb{R}\rightarrow\mathbb{R}$ is a $C^2$-Lipschitz modification of $\sqrt{|z|}$ in
$[-\delta,+\delta]$. In this way, $h_{\delta}$ is Lipschitz, and has bounded first and second derivatives. We characterise some
important features of $B_{N,\delta}$.

\begin{lemma}
  The following properties hold.
  \begin{enumerate}
  \item \label{it:reg-1} $B_{N,\delta}$ is a map from $\mathcal{W}$ to $L(\mathcal{W})$.
  \item \label{it:reg-2} $B_{N,\delta}$ is \emph{locally} Lipschitz with respect to the $L^0_2(\mathcal{W})$-norm.
  \item \label{it:reg-3} $B_{N,\delta}$ has sublinear growth at infinity with the respect to the $L^0_2(\mathcal{W})$-norm.
  \end{enumerate}
\end{lemma}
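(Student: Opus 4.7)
The three claims rest on a single structural observation: in one spatial dimension, the continuous embedding $H^1_{\per}(D)\subset C^0_{\per}(D)$ from Proposition~\ref{p:10} turns $H^1_{\per}(D)$ into a Banach algebra. I plan to combine this with the $C^2$-Lipschitz structure of $h_\delta$ (so that $h_\delta,\ h_\delta',\ h_\delta''$ are all bounded on $\mathbb{R}$), and with the summability $\sum_{j\in\mathbb{Z}}\alpha_{j,\ep}<\infty$, which is a byproduct of the estimate in Lemma~\ref{lem:20} (applied with $n=2$).

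For~\ref{it:reg-1}, linearity of $(a,b)\mapsto B_{N,\delta}((\rho,j))(a,b)$ is immediate. For boundedness I would estimate
\begin{equation*}
\|B_{N,\delta}((\rho,j))(a,b)\|_\mathcal{W}^2 \;=\; \tfrac{\sigma^2}{N}\,\|h_\delta(\rho)\,b\|_{H^1_{\per}(D)}^2
\;\leq\; \tfrac{C\sigma^2}{N}\,\|h_\delta(\rho)\|_{H^1_{\per}(D)}^2\,\|b\|_{H^1_{\per}(D)}^2
\end{equation*}
using the Banach algebra property, then bound $\|h_\delta(\rho)\|_{H^1_{\per}(D)}$ via the chain rule and the boundedness of $h_\delta,\,h_\delta'$.

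For~\ref{it:reg-2}, I would expand the Hilbert--Schmidt norm in the orthonormal basis $\{(0,f_j)\}_{j\in\mathbb{Z}}$ of $P^{1/2}_{\sqrt{2}\ep}\mathcal{W}$, giving
\begin{equation*}
\|B_{N,\delta}(X_1)-B_{N,\delta}(X_2)\|_{L^0_2(\mathcal{W})}^2
\;=\; \tfrac{\sigma^2}{N}\sum_{j\in\mathbb{Z}}\alpha_{j,\ep}\,\|(h_\delta(\rho_1)-h_\delta(\rho_2))\,f_j\|_{H^1_{\per}(D)}^2,
\end{equation*}
then apply the Banach algebra estimate to peel off $\|f_j\|_{H^1_{\per}(D)}^2=1$ and isolate $\|h_\delta(\rho_1)-h_\delta(\rho_2)\|_{H^1_{\per}(D)}$. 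The local Lipschitz control of this last quantity is the technical heart of the argument: the $L^2$-contribution is immediate from $\|h_\delta'\|_\infty<\infty$, while for the derivative I would decompose
\begin{equation*}
(h_\delta(\rho_1)-h_\delta(\rho_2))' \;=\; h_\delta'(\rho_1)(\rho_1'-\rho_2') \;+\; (h_\delta'(\rho_1)-h_\delta'(\rho_2))\,\rho_2',
\end{equation*}
bound the first piece by $\|h_\delta'\|_\infty\,\|\rho_1-\rho_2\|_{H^1_{\per}(D)}$, and the second by $\|h_\delta''\|_\infty\,\|\rho_1-\rho_2\|_{L^\infty(D)}\,\|\rho_2\|_{H^1_{\per}(D)}$, using Sobolev embedding on the $L^\infty$-norm. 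The resulting Lipschitz constant depends on $\|X_2\|_\mathcal{W}$, which is precisely local Lipschitz; summing over $j$ yields a prefactor $C(N,\ep)=\tfrac{C\sigma^2}{N}\sum_j\alpha_{j,\ep}$, finite by Lemma~\ref{lem:20}.

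For~\ref{it:reg-3}, the same expansion and Banach algebra trick reduce matters to bounding $\|h_\delta(\rho)\|_{H^1_{\per}(D)}^2$; the sublinear growth $|h_\delta(z)|\leq C(1+|z|)^{1/2}$ handles the $L^2$ part, and $\|h_\delta'\|_\infty<\infty$ handles the derivative, giving $\|h_\delta(\rho)\|_{H^1_{\per}(D)}^2\leq C(1+\|\rho\|_{H^1_{\per}(D)})$, hence a bound of order $(1+\|X\|_\mathcal{W})$. I expect the main obstacle to be the local Lipschitz step in~\ref{it:reg-2}: one must combine the second-order Taylor information on $h_\delta$ with the one-dimensional Banach algebra estimate carefully, since a naive bound would produce a non-local quadratic dependence on $\|X\|_\mathcal{W}$; everything else is routine once the Banach algebra, chain rule, and Lemma~\ref{lem:20} are in hand.
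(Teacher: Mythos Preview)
Your proposal is correct and follows essentially the same route as the paper. The only cosmetic difference is in how you factor out the basis functions in~\ref{it:reg-2} and~\ref{it:reg-3}: you invoke the Banach algebra property of $H^1_{\per}(D)$ together with $\|f_j\|_{H^1_{\per}(D)}=1$, whereas the paper uses the explicit uniform bounds $\|f_j\|_{L^\infty},\|f_j'\|_{L^\infty}\leq \pi^{-1/2}$ available for the trigonometric basis; both yield the same estimate and the same prefactor $\tfrac{\sigma^2}{N}\sum_j\alpha_{j,\ep}\leq C\sigma^2 N^{-1}\ep^{-7}$ via Lemma~\ref{lem:20}. One small slip: in~\ref{it:reg-3} the derivative contribution $\|(h_\delta(\rho))'\|_{L^2}^2\leq \|h_\delta'\|_\infty^2\|\rho'\|_{L^2}^2$ is quadratic in $\|\rho\|_{H^1_{\per}(D)}$, so the correct intermediate bound is $\|h_\delta(\rho)\|_{H^1_{\per}(D)}^2\leq C(1+\|\rho\|_{H^1_{\per}(D)}^2)$; after summing and taking the square root you still obtain the stated linear bound $\|B_{N,\delta}(X)\|_{L^0_2(\mathcal{W})}\leq C(1+\|X\|_{\mathcal{W}})$, exactly as in the paper.
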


\begin{proof}
\emph{Statement~\ref{it:reg-1}}. Take $(u,v), (a,b)\in\mathcal{W}$. We use Proposition~\ref{p:10} and write
\begin{align*}
  \|B_{N,\delta}((u,v))(a,b)\|^2_{\mathcal{W}} 
  & = \frac{\sigma^2}{N}\|h_{\delta}(u)b\|^2_{H^1_{\per}(D)} 
    \leq \frac{\sigma^2}{N}\left\{\|h_{\delta}(u)b\|^2_{L^2(D)}+C(\delta,u)\|b'\|^2_{L^2(D)}\right.\\
  & \quad \left.+C(\delta)\|b\|^2_{C^0_{\per}(D)}\|u'\|^2_{L^2(D)}\right\}
    \leq \frac{\sigma^2}{N}C(\delta,u)\|b\|^2_{H^1_{\per}(D)}\leq\frac{\sigma^2}{N}C(\delta,u)\|(a,b)\|^2_{\mathcal{W}}.
\end{align*}
This settles the first claim.

\emph{Statement~\ref{it:reg-2}}. Take $(u_1,v_1),(u_2,v_2)\in\mathcal{W}$, such that
$\|(u_1,v_1)\|_{\mathcal{W}}\leq k, \|(u_2,v_2)\|_{\mathcal{W}}\leq k$. We have
\begin{align*}
  \left\|B_{N,\delta}((u_1,v_1))-B_{N,\delta}((u_2,v_2))\right\|^2_{L^0_2(\mathcal{W})} 
  & = \sum_{j\in\mathbb{Z}}{\left\|\sqrt{\alpha_{j,\ep}}\left\{B_{N,\delta}((u_1,v_1))-B_{N,\delta}((u_2,v_2))\right\}(0,f_j)
    \right\|^2_{\mathcal{W}}}\\
  & = \frac{\sigma^2}{N}\sum_{j\in\mathbb{Z}}{\alpha_{j,\ep}\left\|\left(0,\left\{h_{\delta}(u_1)-
    h_{\delta}(u_2)\right\}f_j\right)\right\|^2_{\mathcal{W}}}.
\end{align*}
The right-hand-side in the expression above is well-defined by~\ref{it:reg-1}. From~\eqref{eq:411}, we deduce that
$\|f_j\|_{L^{\infty}}\leq \pi^{-1/2}$, $\|f'_j\|_{L^{\infty}}\leq \pi^{-1/2}$,$\mbox{ for all } j\in\mathbb{Z}$. We use this fact,
as well as the boundedness of $h'_{\delta}$, to compute
\begin{align*}
  & \frac{\sigma^2}{N}\sum_{j\in\mathbb{Z}}{\alpha_{j,\ep}\left\|\left(0,\left\{h_{\delta}(u_1)
    -h_{\delta}(u_2)\right\}f_j\right)\right\|^2_{\mathcal{W}}}\\
  & \quad \leq 
    \frac{\sigma^2}{N}\left[\sum_{j\in\mathbb{Z}}{\alpha_{j,\ep}\left\|\left\{h_{\delta}(u_1)-h_{\delta}(u_2)\right\}f_j\right\|^2_{L^2(D)}}
    +\!\!\sum_{j\in\mathbb{Z}}{\alpha_{j,\ep}\left\|\frac{\m}{\m x}{\left(\left\{h_{\delta}(u_1)
    -h_{\delta}(u_2)\right\}f_j\right)}\right\|^2_{L^2(D)}}\right]\\
  & \quad \leq C\frac{\sigma^2}{N}\left[\sum_{j\in\mathbb{Z}}{\alpha_{j,\ep}\left\|h_{\delta}(u_1)-h_{\delta}(u_2)\right\|^2_{L^2(D)}}
    +\sum_{j\in\mathbb{Z}}{\alpha_{j,\ep}\left\|\frac{\m}{\m x}\left\{h_{\delta}(u_1)-h_{\delta}(u_2)\right\}\right\|^2_{L^2(D)}}\right]\\
  & \quad \leq C(\delta)\frac{\sigma^2}{N}\left(\sum_{j\in\mathbb{Z}}{\alpha_{j,\ep}}\right)\left\{\|u_1-u_2\|^2_{L^2(D)} 
    + \left\|h'_{\delta}(u_1)(u'_{1}-u'_{2})\right\|^2_{L^2(D)}+\left\|u'_{2}(h'_{\delta}(u_1)-h'_{\delta}(u_2))\right\|^2_{L^2(D)}\right\}.
\end{align*}
We use Proposition~\ref{p:10}, the boundedness of $h'_{\delta}$, $h''_{\delta}$, and Lemma~\ref{lem:20} to deduce
\begin{align*}
  & \left\|B_{N,\delta}((u_1,v_1))-B_{N,\delta}((u_2,v_2))\right\|^2_{L^0_2(\mathcal{W})} 
    \leq C(\delta)\frac{\sigma^2}{N}\left(\sum_{j\in\mathbb{Z}}{\alpha_{j,\ep}}\right)
    \left\{\|u_1-u_2\|^2_{L^2(D)}+\|u'_{1}-u'_{2}\|^2_{L^2(D)}\right.\\
  & \quad + \left.\|u'_{2}\|^2_{L^2(D)}\|u_1-u_2\|^2_{C^0_{\per}(D)}\right\}
    \leq C(\delta,k)\frac{\sigma^2}{N}\ep^{-7}\|u_1-u_2\|^2_{H^1_{\per}(D)}
    \leq C(\delta,k)\frac{\sigma^2}{N}\ep^{-7}\|(u_1,v_1)-(u_2,v_2)\|^2_{\mathcal{W}},
\end{align*}
which is the desired local Lipschitz property for $B_{N,\delta}$.

\emph{Statement~\ref{it:reg-3}}. We proceed similarly to the proof of~\ref{it:reg-2}, and compute
\begin{align*}
  \left\|B_{N,\delta}((u,v))\right\|^2_{L^0_2(\mathcal{W})} 
  & = \sum_{j\in\mathbb{Z}}{\left\|\sqrt{\alpha_{j,\ep}}B_{N,\delta}((u,v))(0,f_j)\right\|^2_{\mathcal{W}}}
    =\frac{\sigma^2}{N}\sum_{j\in\mathbb{Z}}{\alpha_{j,\ep}\left\|\left(0,h_{\delta}(u)f_j\right)\right\|^2_{\mathcal{W}}}\\
  & \leq \frac{\sigma^2}{N}\left[\sum_{j\in\mathbb{Z}}{\alpha_{j,\ep}\left\|{h_{\delta}(u)f_j}\right\|^2_{L^2(D)}}
    +\sum_{j\in\mathbb{Z}}{\alpha_{j,\ep}\left\|\frac{\m}{\m x}{\left(h_{\delta}(u)f_j\right)}\right\|^2_{L^2(D)}}\right]\\
  &\leq C\frac{\sigma^2}{N}\left[\sum_{j\in\mathbb{Z}}{\alpha_{j,\ep}\left\|h_{\delta}(u)\right\|^2_{L^2(D)}}
    +\sum_{j\in\mathbb{Z}}{\alpha_{j,\ep}\|h'_{\delta}(u)u'\|^2_{L^2(D)}}\right]\\
  & \leq C(\delta)\frac{\sigma^2}{N}\left[\sum_{j\in\mathbb{Z}}{\alpha_{j,\ep}}\right]
    \left(1+\|(u,v)\|^2_{\mathcal{W}}\right)=C(\delta)\frac{\sigma^2}{N}\ep^{-7}\left(1+\|(u,v)\|^2_{\mathcal{W}}\right),
\end{align*}
where the last inequality follows from the sublinearity of $h_{\delta}$ at infinity and the boundedness of $h'_{\delta}$. We
deduce
\begin{align}
  \label{eq:322}
  \left\|B_{\delta}((u,v))\right\|_{L^0_2(\mathcal{W})}
  \leq \sqrt{C(\delta)\frac{\sigma^2}{N}\ep^{-7}}\left(1+\|(u,v)\|_{\mathcal{W}}\right)=C(\delta)
  \sigma \underbrace{N^{-1/2}\ep^{-7/2}}_{=:M(\ep,N)}\left(1+\|(u,v)\|_{\mathcal{W}}\right).
\end{align}
This completes the proof.
\end{proof}

\begin{rem}
  The quantity $M(\ep,N)$ introduced in~\eqref{eq:322} is the justification of the scaling $\theta>7$ in Theorem~\ref{thm:100}.
\end{rem}

\subsection{Existence of mild solutions in the $\mathcal{W}$-topology up to random time}\label{ss:50}

We consider the following $\delta$-smoothed version of the regularised Dean--Kawasaki system~\eqref{eq:501}
\begin{equation}
  \label{eq:318}
  \left\{
    \begin{array}{l}
      \m X_{\ep,\delta}(t)=(AX_{\ep,\delta}(t)+\alpha_{\per}X_{\ep,\delta}(t))\m t+B_{N,\delta}(X_{\ep,\delta}(t))\m W_{\per,\ep}, \\
      X_{\ep,\delta}(0)=X_0.
    \end{array}
  \right.
\end{equation}

We prove the following result.

\begin{prop}
  \label{p:11}
  Let $T>0$. Let $X_0\in\mathcal{W}$ be deterministic. Then~\eqref{eq:318} admits a unique mild solution $X_{\ep,\delta}$ on
  $[0,T]$ with respect to the $\mathcal{W}$-topology. Moreover, the solution $X_{\ep,\delta}$ is \emph{c\`adl\`ag} in the
  $\mathcal{W}$-topology.
\end{prop}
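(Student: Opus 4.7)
The plan is to prove Proposition~\ref{p:11} by the standard semigroup/Banach fixed-point approach for mild solutions to SPDEs with locally Lipschitz coefficients, adapted to our setting by combining Lemma~\ref{lem:30} and the three properties of $B_{N,\delta}$ established just above. First, I would rewrite~\eqref{eq:318} in mild form,
\begin{equation*}
X_{\ep,\delta}(t) = S(t)X_0 + \int_0^t S(t-s)\alpha_{\per}X_{\ep,\delta}(s)\,\m s + \int_0^t S(t-s)B_{N,\delta}(X_{\ep,\delta}(s))\,\m W_{\per,\ep}(s),
\end{equation*}
and check that the stochastic convolution is well-defined in $\mathcal{W}$: since $B_{N,\delta}$ maps into $L(\mathcal{W})$ with finite $L^0_2(\mathcal{W})$-norm (properties~\ref{it:reg-1}, \ref{it:reg-3}), and $S(t)$ is a contraction on $\mathcal{W}$, the integrand is a bona fide $L^0_2(\mathcal{W})$-valued process.

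Second, since $B_{N,\delta}$ is only locally Lipschitz, I would localize. For each $R>0$ pick a smooth cut-off $\psi_R\colon\mathbb{R}_+\to[0,1]$ equal to $1$ on $[0,R]$ and vanishing on $[R+1,\infty)$, and set $B^R_{N,\delta}(Y):=\psi_R(\|Y\|_{\mathcal{W}})B_{N,\delta}(Y)$. By property~\ref{it:reg-2}, $B^R_{N,\delta}$ is globally Lipschitz on $\mathcal{W}$ into $L^0_2(\mathcal{W})$, and the linear growth bound~\eqref{eq:322} survives the truncation. I would then apply Banach's fixed-point theorem to the map
\begin{equation*}
(\mathcal{T}_R Y)(t) := S(t)X_0 + \int_0^t S(t-s)\alpha_{\per}Y(s)\,\m s + \int_0^t S(t-s)B^R_{N,\delta}(Y(s))\,\m W_{\per,\ep}(s)
\end{equation*}
on the space of $\mathcal{F}_t$-adapted processes with $\sup_{t\in[0,T]}\mathbb{E}\|Y(t)\|^2_{\mathcal{W}}<\infty$, endowed with an exponentially weighted norm (as in Da Prato--Zabczyk). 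The contraction property of $\{S(t)\}$, the It\^o isometry for the stochastic convolution, and boundedness of $\alpha_{\per}$ are the required ingredients. This yields a unique mild solution $X^R_{\ep,\delta}$ on $[0,T]$ to the truncated equation.

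Third, I would glue: define stopping times $\tau_R:=\inf\{t\in[0,T]: \|X^R_{\ep,\delta}(t)\|_{\mathcal{W}}\geq R\}\wedge T$. Uniqueness implies that for $R_1<R_2$ the solutions $X^{R_1}_{\ep,\delta}$ and $X^{R_2}_{\ep,\delta}$ agree on $[0,\tau_{R_1}]$, hence they can be patched to a unique mild solution $X_{\ep,\delta}$ on $[0,\tau_\infty)$, with $\tau_\infty:=\sup_R\tau_R$. To show $\tau_\infty=T$ almost surely, I would apply Gronwall to $\mathbb{E}\|X_{\ep,\delta}(t\wedge\tau_R)\|^2_{\mathcal{W}}$: the contraction property $\|S(t)\|_{L(\mathcal{W})}\leq 1$, boundedness of $\alpha_{\per}$, It\^o isometry, and the linear growth bound~\eqref{eq:322} yield an $R$-uniform bound $\sup_{t\in[0,T]}\mathbb{E}\|X_{\ep,\delta}(t\wedge\tau_R)\|^2_{\mathcal{W}}\leq C(T,X_0,\ep,N,\delta)$, so Chebyshev gives $\mathbb{P}(\tau_R<T)\to 0$ as $R\to\infty$.

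Finally, for the c\`adl\`ag property, the two deterministic terms $t\mapsto S(t)X_0$ and $t\mapsto\int_0^t S(t-s)\alpha_{\per}X_{\ep,\delta}(s)\,\m s$ are continuous in $\mathcal{W}$ by strong continuity of $S$, while the stochastic convolution admits a continuous modification via the standard factorization method of Da Prato--Kwapie\'n--Zabczyk, using the contraction of $S$ and the moment bound just obtained. The main obstacle is really packaged into the earlier work: verifying that $A$ generates a contraction semigroup precisely on $\mathcal{W}$ (Lemma~\ref{lem:30}) and that the non-Lipschitz square root is tamed into a locally Lipschitz, linearly growing operator on $\mathcal{W}$ via the $\delta$-regularization; once these are in place, Proposition~\ref{p:11} follows from textbook SPDE machinery with no further non-trivial estimates.
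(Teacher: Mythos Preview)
Your proposal is correct and follows essentially the same approach as the paper: the paper's proof is a one-line citation of \cite[Theorem 4.5 and Remark 4.6]{Tappe2012a}, which packages exactly the localization/fixed-point/non-explosion argument you spell out, using the contraction semigroup from Lemma~\ref{lem:30}, boundedness of $\alpha_{\per}$, and the locally Lipschitz and linear-growth properties of $B_{N,\delta}$. Your direct route via factorization in fact yields a \emph{continuous} modification of the stochastic convolution (stronger than the c\`adl\`ag claim, which in the paper's reference is phrased that way only because Tappe's framework also accommodates Poisson jump terms).
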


Let $\{S(t)\}_{t\geq 0}$ be the $C_0$-semigroup generated by $A$ discussed in Lemma \ref{lem:30}. We recall that a \emph{mild
  solution} for~\eqref{eq:318} is~\cite[Chapter 7]{Da-Prato2014a} a predictable $\mathcal{W}$-valued process
$X_{\ep,\delta}(t)=(\rho_{\ep,\delta}(t),j_{\ep,\delta}(t))$, $t\in[0,T]$, such that
\begin{align}
  \label{eq:922}
  \mathbb{P}\left(\int_{0}^{T}{\left\|X_{\ep,\delta}(s)\right\|^2_{\mathcal{W}}\m s}<\infty\right)=1,
\end{align}
and, for arbitrary $t\in[0,T]$
\begin{align*}
  X_{\ep,\delta}(t)=S(t)X_0+\int_{0}^{t}{S(t-s)\alpha_{\per}X_{\ep,\delta}(s)\m s}
  +\int_{0}^{t}{S(t-s)B_{N,\delta}(X_{\ep,\delta}(s))\m W_{\per,\ep}},\qquad\mathbb{P}\mbox{-a.s.}
\end{align*}

\begin{proof}[Proof of Proposition~\ref{p:11}]
  We apply~\cite[Theorem 4.5]{Tappe2012a} and take into account~\cite[Remark 4.6]{Tappe2012a}.
\end{proof}

The mild solution $X_{\ep,\delta}$ to~\eqref{eq:318} is, in particular, c\`adl\`ag at time $t=0$ with respect to the
$\mathcal{W}$-norm. Let us fix a parameter $\eta>\delta>0$. In addition to the hypotheses already given for $X_0$ in
Proposition~\ref{p:11}, we also assume
\begin{align}
  \label{eq:320}
  \rho_0(x)\geq \eta,\qquad \mbox{ for all } x\in D.
\end{align}
Keeping in mind Proposition~\ref{p:10} and the c\`adl\`ag properties at time $t=0$, we deduce the existence of a random time
$\zeta(\omega)$ such that
\begin{align}
  \label{eq:319}
  \|\rho_0(\cdot)-\rho(t,\cdot)\|_{L^{\infty}(D)}\leq \eta-\delta,\qquad \mbox{ for all } t\in[0,\zeta(\omega)).
\end{align}
The bound~\eqref{eq:319} implies that $B_{N,\delta}(X_{\ep,\delta}(s))$ coincides with $B_N(X_{\ep,\delta}(s))$ for
$s\in[0,\zeta(\omega))$. We thus have

\begin{theorem}
  Let the hypotheses of Proposition~\ref{p:11} be satisfied, as well as~\eqref{eq:320}. Then the \emph{regularised Dean--Kawasaki
    model}~\eqref{eq:501} admits a unique mild solution with respect to the $\mathcal{W}$-topology up to a random time $\zeta$.
\end{theorem}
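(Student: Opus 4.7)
The plan is to exploit the fact that the localization bound~\eqref{eq:319}, derived from the c\`adl\`ag property of $X_{\ep,\delta}$ at $t=0$ together with the strict positivity assumption~\eqref{eq:320}, forces $\rho_{\ep,\delta}(t,\cdot)$ to stay bounded away from the regularisation threshold $\delta$ on the random interval $[0,\zeta(\omega))$. On that interval the regularised integrand coincides pathwise with the un-smoothed one, so the solution from Proposition~\ref{p:11} automatically solves~\eqref{eq:501}. Uniqueness is then a localisation argument that again reduces any putative solution of~\eqref{eq:501} to one of~\eqref{eq:318}, at which point Proposition~\ref{p:11} applies.

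For existence, I would first observe that for each $\omega$ and $t\in[0,\zeta(\omega))$, Proposition~\ref{p:10} combined with~\eqref{eq:319} and~\eqref{eq:320} gives the pointwise bound
\begin{equation*}
\rho_{\ep,\delta}(t,x)\geq \rho_0(x)-\|\rho_0-\rho_{\ep,\delta}(t,\cdot)\|_{L^\infty(D)}\geq \eta-(\eta-\delta)=\delta>0\qquad\text{for all }x\in D.
\end{equation*}
Since $h_\delta(z)=\sqrt{|z|}$ whenever $|z|\geq\delta$, one has $B_{N,\delta}(X_{\ep,\delta}(s))=B_N(X_{\ep,\delta}(s))$ for every $s\in[0,\zeta(\omega))$. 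Thus, composing with the semigroup $\{S(t)\}_{t\ge 0}$ of Lemma~\ref{lem:30}, the mild solution identity satisfied by $X_{\ep,\delta}$ on $[0,T]$ collapses to the mild formulation of~\eqref{eq:501} on $[0,\zeta(\omega))$. The integrability condition analogous to~\eqref{eq:922} is inherited automatically from $X_{\ep,\delta}$ restricted to $[0,\zeta)$, so $X_{\ep}:=X_{\ep,\delta}|_{[0,\zeta)}$ is a genuine mild solution to~\eqref{eq:501} in the $\mathcal{W}$-topology.

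For uniqueness, suppose $\tilde X_\ep=(\tilde\rho_\ep,\tilde j_\ep)$ is another $\mathcal{W}$-valued mild solution to~\eqref{eq:501} up to some random time $\tilde\zeta$, with $\tilde X_\ep(0)=X_0$. Using c\`adl\`agness at $t=0$ and~\eqref{eq:320}, define a random time $\tilde\zeta_\delta\leq\tilde\zeta$ by requiring $\|\rho_0-\tilde\rho_\ep(t,\cdot)\|_{L^\infty(D)}\leq\eta-\delta$ on $[0,\tilde\zeta_\delta)$; then $\tilde\rho_\ep(t,x)\geq\delta$ on this interval, so $B_N(\tilde X_\ep)=B_{N,\delta}(\tilde X_\ep)$ pathwise. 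Consequently $\tilde X_\ep$ is also a mild solution of the $\delta$-smoothed system~\eqref{eq:318} on $[0,\tilde\zeta_\delta)$. The uniqueness part of Proposition~\ref{p:11} forces $\tilde X_\ep=X_{\ep,\delta}$ on $[0,\zeta\wedge\tilde\zeta_\delta)$; by a standard pathwise continuation argument (one can replace $\delta$ by a decreasing sequence $\delta_k\downarrow 0$ and take a union of the associated stopping times), the two solutions coincide on the whole of $[0,\zeta)$.

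The main conceptual obstacle, and the reason the construction is phrased in terms of a random time rather than a deterministic one, is that nothing in the wave-equation structure of $A$ or in the non-Lipschitz noise provides a mechanism to prevent $\rho_\ep$ from crossing zero; the localization through $\zeta$ is what isolates the regime in which $B_N$ is actually well-defined and Lipschitz. A technical point that needs care is ensuring that $\zeta$ is indeed a stopping time with respect to the augmented filtration, which follows because $t\mapsto \|\rho_0-\rho_{\ep,\delta}(t,\cdot)\|_{L^\infty(D)}$ is adapted and right-continuous thanks to Proposition~\ref{p:10} and the c\`adl\`ag property in $\mathcal{W}$.
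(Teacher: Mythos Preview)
Your proposal is correct and follows exactly the paper's approach: the paper's ``proof'' is essentially the single observation preceding the theorem statement, namely that~\eqref{eq:319} forces $B_{N,\delta}(X_{\ep,\delta}(s))=B_N(X_{\ep,\delta}(s))$ on $[0,\zeta(\omega))$, so the solution from Proposition~\ref{p:11} automatically solves~\eqref{eq:501} there. You have expanded this with a careful pointwise lower bound and an explicit uniqueness argument via localisation, both of which the paper leaves implicit; your added care about $\zeta$ being a stopping time and the continuation via $\delta_k\downarrow 0$ is more than the paper provides, but consistent with it.
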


\subsection{Estimates for $X_{\ep,\delta}$}
\label{ss:51}

We now study some moment bounds for the real-valued random variables $\|X_{\ep,\delta}(t)\|_{\mathcal{W}}$, where $X_{\ep,\delta}$
solves~\eqref{eq:318}.

\begin{prop}
  \label{p:12}
  Let $T>0$, $\delta>0$, and $q>2$ be fixed. Let $X_0\in \mathcal{W}$ be a deterministic initial condition for~\eqref{eq:318}. Let
  $\Theta=\Theta(T,q,\sigma,\delta,\ep,N):=\left\{C(q,T)\|X_0\|_{\mathcal{W}}^q+TC(\sigma,\delta)M^q(\ep,N)\right\}
  e^{C(T,q)+C(T,\sigma,\delta)M^q(\ep,N)}$. Then
    \begin{align}
    \label{eq:321}
    \sup_{t\in[0,T]}\mean{\|X_{\ep,\delta}(t)\|_{\mathcal{W}}^q}\leq \Theta.
  \end{align}
\end{prop}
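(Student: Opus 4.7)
The plan is to apply a Gronwall argument to the $q$-th moment of the mild formulation
\[
X_{\ep,\delta}(t)=S(t)X_0+\int_0^t S(t-s)\alpha_{\per}X_{\ep,\delta}(s)\,ds+\int_0^t S(t-s)B_{N,\delta}(X_{\ep,\delta}(s))\,dW_{\per,\ep}(s),
\]
using the three ingredients already at our disposal: (i) $\{S(t)\}_{t\geq 0}$ is a contraction semigroup on $\mathcal{W}$ (Lemma~\ref{lem:30}), so $\|S(t)u\|_{\mathcal{W}}\le\|u\|_{\mathcal{W}}$; (ii) $\alpha_{\per}\in L(\mathcal{W})$ with norm bounded by $\|V'_{\per}\|_{C^0_{\per}}$ (as noted after Lemma~\ref{lem:20}), so $\|\alpha_{\per}X\|_{\mathcal{W}}\le C\|X\|_{\mathcal{W}}$; (iii) the sublinear growth bound \eqref{eq:322}, namely $\|B_{N,\delta}(X)\|_{L^0_2(\mathcal{W})}\le C(\delta)\sigma M(\ep,N)(1+\|X\|_{\mathcal{W}})$.

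Raising to the $q$-th power and applying the elementary inequality $(a+b+c)^q\le 3^{q-1}(a^q+b^q+c^q)$, I would bound each of the three resulting terms separately. The semigroup term produces $\|X_0\|_{\mathcal{W}}^q$. For the deterministic convolution, I would use the contraction property of $S$ together with H\"older's inequality in time to obtain
\[
\Bigl\|\int_0^t S(t-s)\alpha_{\per}X_{\ep,\delta}(s)\,ds\Bigr\|_{\mathcal{W}}^q\le t^{q-1}\int_0^t \|\alpha_{\per}\|^q\|X_{\ep,\delta}(s)\|_{\mathcal{W}}^q\,ds\le C(T,q)\int_0^t \|X_{\ep,\delta}(s)\|_{\mathcal{W}}^q\,ds.
\]
For the stochastic convolution, I would invoke the Burkholder-type inequality for stochastic convolutions driven by a $Q$-Wiener process under contraction semigroups (see, e.g., \cite[Lemma~7.2 and Prop.~7.3]{Da-Prato2014a}), which gives
\[
\mean{\Bigl\|\int_0^t S(t-s)B_{N,\delta}(X_{\ep,\delta}(s))\,dW_{\per,\ep}\Bigr\|_{\mathcal{W}}^q}\le C_q\,\mean{\Bigl(\int_0^t\|B_{N,\delta}(X_{\ep,\delta}(s))\|_{L^0_2(\mathcal{W})}^2\,ds\Bigr)^{q/2}}.
\]
Applying H\"older in time again and then \eqref{eq:322}, this is dominated by $C_q\,T^{q/2-1}C(\delta)^q\sigma^q M^q(\ep,N)\int_0^t\mean{1+\|X_{\ep,\delta}(s)\|_{\mathcal{W}}^q}\,ds$, which contributes the $TC(\sigma,\delta)M^q(\ep,N)$ base term together with an integral term carrying $C(T,\sigma,\delta)M^q(\ep,N)$ as coefficient.

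Collecting the three estimates produces an integral inequality of the form
\[
\varphi(t):=\sup_{s\in[0,t]}\mean{\|X_{\ep,\delta}(s)\|_{\mathcal{W}}^q}\le A+B\int_0^t\varphi(s)\,ds,
\]
with $A=C(q,T)\|X_0\|_{\mathcal{W}}^q+TC(\sigma,\delta)M^q(\ep,N)$ and $B=C(T,q)+C(T,\sigma,\delta)M^q(\ep,N)$. A classical Gronwall inequality then yields $\varphi(T)\le A e^{BT}$, which is precisely $\Theta$ in \eqref{eq:321}. To make the argument rigorous one needs $\varphi(t)$ a priori finite, which follows from \eqref{eq:922} together with the sublinear growth of $B_{N,\delta}$ and boundedness of $\alpha_{\per}$ via a standard stopping-time truncation $\tau_n=\inf\{t:\|X_{\ep,\delta}(t)\|_{\mathcal{W}}>n\}$, letting $n\to\infty$ by monotone convergence after the Gronwall step.

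The main obstacle is really the legitimate use of a BDG-type inequality for the stochastic convolution rather than a stochastic integral: direct BDG does not apply to $\int_0^t S(t-s)B\,dW$, but the contraction property of $S$ combined with a factorisation method (or the direct stochastic Fubini/factorisation results in \cite{Da-Prato2014a}) rescues the argument. Once this ingredient is in place, everything else reduces to H\"older's inequality, the sublinear growth \eqref{eq:322}, and Gronwall, with the exponential dependence on $M^q(\ep,N)$ arising exactly from the Gronwall constant $B$.
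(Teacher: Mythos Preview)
Your approach is essentially the same as the paper's: mild formulation, contraction of $S$, boundedness of $\alpha_{\per}$, the sublinear growth \eqref{eq:322}, a BDG-type bound on the stochastic convolution, and Gronwall after a stopping-time localisation. Two small points are worth flagging. First, for the stochastic convolution at a \emph{fixed} time $t$ you do not need factorisation or Proposition~7.3; since $s\mapsto S(t-s)B_{N,\delta}(X_{\ep,\delta}(s))$ is an admissible integrand and $\|S(t-s)\Phi\|_{L^0_2}\le\|\Phi\|_{L^0_2}$ by contraction, the ordinary Hilbert-space BDG inequality (\cite[Theorem~4.36]{Da-Prato2014a}) already gives the bound you wrote. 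Second, your justification of a~priori finiteness via \eqref{eq:922} is not quite enough: that only gives pathwise $L^2$-in-time integrability, whereas to make the truncation $\tau_n=\inf\{t:\|X_{\ep,\delta}(t)\|_{\mathcal{W}}>n\}$ work (and to have $\tau_n\uparrow T$ a.s.) you need pathwise boundedness on $[0,T]$, which comes from the c\`adl\`ag property in Proposition~\ref{p:11}. The paper makes this explicit and, instead of truncating the norm of the solution, truncates the time-integrals of $\|B_{N,\delta}(X_{\ep,\delta})\|_{L^0_2}^q$ and $\|\alpha_{\per}X_{\ep,\delta}\|_{\mathcal{W}}$ directly; both localisations lead to the same Gronwall inequality, followed by Fatou to remove the truncation.
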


\begin{proof}
We rely on some ideas of the proof of~\cite[Theorem 7.2]{Da-Prato2014a}. We know from Proposition~\ref{p:11} that the paths of
$X_{\ep,\delta}$ are c\`adl\`ag in the $\mathcal{W}$-topology. It follows that the real-valued process $t\mapsto
\|X_{\ep,\delta}(t)\|_{\mathcal{W}}^q$ is also c\`adl\`ag. This fact, together with~\eqref{eq:322}, allows us to deduce
\begin{align}
  \label{eq:323}
  \int_{0}^{T}{\|B_{N,\delta}(X_{\ep,\delta}(s))\|_{L^0_2(\mathcal{W})}^q\,\m s}<\infty,\quad \int_{0}^{T}{\|\alpha_{\per}(X_{\ep,\delta}(s))\|_{\mathcal{W}}\,\m s}<\infty, \quad\mathbb{P}\mbox{-a.s.}
\end{align} 
For $R\in\mathbb{N}$, we define the stopping times 
\begin{equation*}
  \tau_R:=
  \inf\left\{t\in(0,T]:\int_{0}^{t}{\|B_{N,\delta}(X_{\ep,\delta}(s))\|_{L^0_2(\mathcal{W})}^q\,\m s}
    \geq R\,\,\quad\mbox{or}\quad\int_{0}^{t}{\|\alpha_{\per}(X_{\ep,\delta}(s))\|_{\mathcal{W}}\,\m s}\geq R\right\},
\end{equation*}
with the usual convention $\tau_R:=T$ whenever the above infimum acts on the empty set. If we set
$X_{\ep,\delta,R}(t):=\mathbf{1}_{[0,\tau_R]}(t)X_{\ep,\delta}(t)$, it is then clear that
\begin{align*}
  X_{\ep,\delta,R}(t) 
  & = \mathbf{1}_{[0,\tau_R]}(t)S(t)X_0+\mathbf{1}_{[0,\tau_R]}(t)\int_{0}^{t}{\mathbf{1}_{[0,\tau_R]}(s)S(t-s)
    \alpha_{\per}X_{\ep,\delta,R}(s)\m s}\\
  & \quad + \mathbf{1}_{[0,\tau_R]}(t)\int_{0}^{t}{\mathbf{1}_{[0,\tau_R]}(s)S(t-s)B_{N,\delta}(X_{\ep,\delta,R}(s))\m W_{\per,\ep}}.
\end{align*}
We rely on~\cite[Theorem 4.36]{Da-Prato2014a},~\eqref{eq:322}, and the H\"older inequality and deduce
\begin{align}
  & \mean{\|X_{\ep,\delta,R}(t)\|_{\mathcal{W}}^q} \nonumber\\
  & \quad  \leq C(q,V_{\per})\left\{\|S(t)X_0\|^q_{\mathcal{W}}+\mean{\left(\int_{0}^{t}{\left\|X_{\ep,\delta,R}(s)\right\|_{\mathcal{W}}}
    \m s\right)^q}\right.\nonumber\\
    & \quad \quad +\left.\mean{\left\|\int_{0}^{t}{\mathbf{1}_{[0,\tau_R]}(s)S(t-s)B_{N,\delta}(X_{\ep,\delta,R}(s))\m W_{\per,\ep}}\right\|^q_{\mathcal{W}}}\right\}\nonumber\\
  & \quad  \leq C(q,V_{\per})\left\{\|X_0\|^q_{\mathcal{W}}+\mean{\left(\int_{0}^{t}{\left\|X_{\ep,\delta,R}(s)\right\|_{\mathcal{W}}}
    \m s\right)^q}+\mean{\int_{0}^{t}{\left\|B_{N,\delta}(X_{\ep,\delta,R}(s))\right\|^2_{L^0_2(\mathcal{W})}}\,\m s}^{q/2}\right\}\label{eq:324}\\
  & \quad  \leq C(q,T,V_{\per})\left\{\|X_0\|^q_{\mathcal{W}}+\int_{0}^{t}{\mean{\|X_{\ep,\delta,R}(s)\|_{\mathcal{W}}^q}\m s}
    +C(\sigma,\delta)M^q(\ep,N)\mean{\int_{0}^{t}{\left(1+\left\|X_{\ep,\delta,R}(s)\right\|^q_{\mathcal{W}}\right)}\,\m s}\right\}\nonumber\\
  & \quad \leq g_1+\int_{0}^{t}{g_2\mean{\|X_{\ep,\delta,R}(s)\|_{\mathcal{W}}^q}\m s},\label{eq:325}
\end{align}
where $g_1:=C(q,T,V_{\per})\|X_0\|_{\mathcal{W}}^q+TC(\sigma,\delta)M^q(\ep,N)$ and $g_2:=C(T,q)+C(\sigma,\delta)M^q(\ep,N)$. The
definition of $X_{\ep,\delta,R}$ implies that~\eqref{eq:324} is finite, hence so is
$\mean{\|X_{\ep,\delta,R}(t)\|_{\mathcal{W}}^q}$. We use Gronwall's lemma in~\eqref{eq:325} to conclude
\begin{align}
  \mean{\|X_{\ep,\delta,R}(t)\|_{\mathcal{W}}^q}\leq 
  \left\{C(q,T)\|X_0\|_{\mathcal{W}}^q+TC(\sigma,\delta)M^q(\ep,N)\right\}e^{C(T,q)+C(T,\sigma,\delta)M^q(\ep,N)},
  \quad\mbox{for all } t\in[0,T].
\end{align}
The integrability property~\eqref{eq:323} implies that $\tau_R(\omega)=T$ for $R\geq R(\omega),\,\mathbb{P}\mbox{-a.s.}$ As a
result, we deduce
\begin{align*}
  \lim_{R\rightarrow +\infty}{X_{\ep,\delta,R}(t)}=X_{\ep,\delta}(t)\mbox{ in }\mathcal{W},\quad t\in[0,T],\quad\mathbb{P}\mbox{-a.s.}
\end{align*} 
We use Fatou's lemma and we obtain
\begin{align*}
  \mean{\|X_{\ep,\delta}(t)\|_{\mathcal{W}}^q} 
  & \leq \liminf_{R\rightarrow +\infty}{\mean{\|X_{\ep,\delta,R}(t)\|_{\mathcal{W}}^q}}\\
  & \leq \left\{C(q,T)\|X_0\|_{\mathcal{W}}^q+TC(\sigma,\delta)M^q(\ep,N)\right\}e^{C(T,q)+C(T,\sigma,\delta)M^q(\ep,N)},
    \qquad\mbox{ for all } t\in[0,T].
\end{align*}
Taking the supremum in time finally yields the result.
\end{proof}

We obtained~\eqref{eq:321} by using the c\`adl\`ag property of the solution $X_{\ep,\delta}$. This allows us to consider an
arbitrary $q>2$. If we only relied the definition of mild solution (see in particular~\eqref{eq:922}), the exponent $q=2$ would be
the maximum exponent we could take. This is exactly the case for the proof of uniqueness in~\cite[Theorem 7.2]{Da-Prato2014a},
from which we adapted the proof of Proposition~\ref{p:12}. The proof of~\cite[Theorem 7.2, (7.6)]{Da-Prato2014a}, which is exactly
our~\eqref{eq:321}, relies on a fixed point argument instead. We cannot use this argument, since we lack the global Lipschitz
property for the stochastic integrand $B_{N,\delta}$. The need for $q>2$, and not simply $q=2$, is motivated by~\cite[Proposition
7.3]{Da-Prato2014a}, which we will use in the next section.

\subsection{Small-noise regime analysis}
\label{ss:52}

In this subsection, we investigate the small-noise regime analysis for solutions $X_{\ep,\delta}$ to~\eqref{eq:318}.

\begin{prop}
  \label{p:13}
  Let the hypotheses of Proposition~\ref{p:11} be satisfied. In addition, assume the following scaling for $\ep,N$
  \begin{align}
    \label{eq:412}
    N\ep^{\theta}\geq 1,\qquad\mbox{for some }\theta>7.
  \end{align}
  For fixed $\delta>0$, $T>0$, $r>0$, $q>2$, we have 
  \begin{align*}
    \lim_{\ep\downarrow 0}{\mathbb{P}\left(\sup_{t\in[0,T]}\left\| X_{\ep,\delta}(t)-Z(t)\right\|^q_{\mathcal{W}}\geq r\right)}=0,
  \end{align*}
  where $Z$ is the unique (deterministic) solution of
  \begin{align}
    \label{eq:413}
    \left\{
    \begin{array}{l}
      \mbox{\emph{d}}Z(t)=(AZ(t)+\alpha_{\per}Z(t))\mbox{\emph{d}}t, \\
      Z(0)=X_0.
    \end{array}
    \right.
  \end{align}
\end{prop}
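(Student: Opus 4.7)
The plan is to compare the mild formulations of $X_{\ep,\delta}$ and $Z$ directly. Existence and uniqueness of $Z$ follow immediately from the standard linear theory, since $A$ generates a $C_0$-semigroup (Lemma~\ref{lem:30}) and $\alpha_{\per}$ is bounded on $\mathcal{W}$; the mild solution can be written as $Z(t) = S(t)X_0 + \int_0^t S(t-s)\alpha_{\per}Z(s)\,\mbox{d}s$. Subtracting the two mild formulations gives
\begin{align*}
X_{\ep,\delta}(t) - Z(t) = \int_0^t S(t-s)\alpha_{\per}\bigl(X_{\ep,\delta}(s) - Z(s)\bigr)\,\mbox{d}s + M_{\ep,\delta}(t),
\end{align*}
where $M_{\ep,\delta}(t):=\int_0^t S(t-s)B_{N,\delta}(X_{\ep,\delta}(s))\,\mbox{d}W_{\per,\ep}(s)$ is the stochastic convolution.

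Next, I would take $\mathcal{W}$-norm, raise to the $q$-th power, and take the supremum over $[0,t]$. Using the contractivity of $\{S(t)\}_{t\geq 0}$, the boundedness of $\alpha_{\per}$, and Jensen's inequality, I obtain
\begin{align*}
\sup_{u\in[0,t]}\|X_{\ep,\delta}(u)-Z(u)\|_{\mathcal{W}}^q \leq C(q,T,V_{\per})\int_0^t \sup_{v\in[0,s]}\|X_{\ep,\delta}(v)-Z(v)\|_{\mathcal{W}}^q\,\mbox{d}s + 2^{q-1}\sup_{u\in[0,t]}\|M_{\ep,\delta}(u)\|_{\mathcal{W}}^q.
\end{align*}
Taking expectation and applying the stochastic maximal inequality for stochastic convolutions from~\cite[Proposition 7.3]{Da-Prato2014a} (this is exactly where the assumption $q>2$ is needed), I get
\begin{align*}
\mean{\sup_{u\in[0,T]}\|M_{\ep,\delta}(u)\|_{\mathcal{W}}^q} \leq C(q,T)\,\mean{\int_0^T\|B_{N,\delta}(X_{\ep,\delta}(s))\|_{L^0_2(\mathcal{W})}^q\,\mbox{d}s}.
\end{align*}

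The sublinear growth estimate~\eqref{eq:322} bounds the integrand by $C(\delta,q)\sigma^q M^q(\ep,N)(1+\|X_{\ep,\delta}(s)\|_{\mathcal{W}}^q)$. Combining with the moment bound from Proposition~\ref{p:12}, this contribution is controlled by $C\,M^q(\ep,N)\,T\,(1+\Theta(T,q,\sigma,\delta,\ep,N))$. Under the scaling~\eqref{eq:412}, $M^2(\ep,N)=N^{-1}\ep^{-7}\leq \ep^{\theta-7}\to 0$, hence $M^q(\ep,N)\to 0$; moreover $\Theta$ itself stays bounded in this limit because $M^q(\ep,N)\to 0$ tames the exponential factor. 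Hence an application of Gronwall's lemma yields
\begin{align*}
\mean{\sup_{u\in[0,T]}\|X_{\ep,\delta}(u)-Z(u)\|_{\mathcal{W}}^q}\leq C(q,T,\sigma,\delta,V_{\per},X_0)\,M^q(\ep,N)\xrightarrow[\ep\downarrow 0]{}0.
\end{align*}
The claim then follows by Chebyshev's inequality.

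The main technical obstacle is the bookkeeping surrounding $M(\ep,N)$: the stochastic integrand has Lipschitz and growth constants that blow up as $\ep\downarrow 0$, while the noise itself (via the $L^0_2$-norm of $B_{N,\delta}$) also scales like $M(\ep,N)$. One must verify that the scaling $\theta>7$ is strict enough to beat both blow-ups simultaneously inside the moment estimate $\Theta$ of Proposition~\ref{p:12} and inside the Gronwall loop; the hypothesis $\theta>7$ rather than $\theta\geq 7$ is precisely what gives the strict decay $M^q(\ep,N)=\mathcal{O}(\ep^{q(\theta-7)/2})\to 0$.
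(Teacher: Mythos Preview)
Your proposal is correct and follows essentially the same route as the paper: subtract the mild formulations, control the stochastic convolution via \cite[Proposition~7.3]{Da-Prato2014a} together with the growth bound~\eqref{eq:322} and the moment estimate of Proposition~\ref{p:12}, then close with Gronwall and Chebyshev. The paper's argument is slightly terser but identical in substance, including the observation that the scaling~\eqref{eq:412} keeps $\Theta$ bounded while forcing $M^q(\ep,N)\to 0$.
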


\begin{proof}
  We adapt the proof of~\cite[Proposition 12.1]{Da-Prato2014a}. The scaling~\eqref{eq:412} implies that $M(\ep,N)\rightarrow0$ in
  the simultaneous limit of $\ep$ and $N$. We write
\begin{align*}
  X_{\ep,\delta}(t)-Z(t)=\int_{0}^{t}{S(t-s)\alpha_{\per}(X_{\ep,\delta}(s)-Z(s))\m s}
  +\int_{0}^{t}{S(t-s)B_{N,\delta}(X_{\ep,\delta}(s))\m W_{\per,\ep}}.
\end{align*}
We use~\cite[Proposition 7.3]{Da-Prato2014a} and Proposition~\ref{p:12} to deduce
\begin{align}
  & \mean{\sup_{s\in[0,t]}{\left\|X_{\ep,\delta}(s)-Z(s)\right\|^q_{\mathcal{W}}}} \nonumber\\
  & \quad \leq C(T,q,V_{\per})\mean{\int_{0}^{t}{\left\|X_{\ep,\delta}(u)-Z(u)\right\|^q_{\mathcal{W}}\m u}}
    +\mean{\sup_{s\in[0,T]}{\left\|\int_{0}^{s}{S(t-s)B_{N,\delta}(X_{\ep,\delta})\m W_{\per,\ep}}\right\|^q}}\nonumber\\ 
  & \quad \leq C(T,q,V_{\per})\mean{\int_{0}^{t}{\left\|X_{\ep,\delta}(u)-Z(u)\right\|^q_{\mathcal{W}}\m u}}
    +C(\sigma,\delta,T,q)M^q(\ep,N)\mean{\int_{0}^{T}{(1+\|X_{\theta,\delta}\|^q_{\mathcal{W}})\m s}}\label{eq:910}\\
  & \quad \leq C(T,q,V_{\per})\int_{0}^{t}{\mean{\sup_{s\in[0,u]}{\left\|X_{\ep,\delta}(u)-Z(u)\right\|^q_{\mathcal{W}}}}\m u}
    +C(\sigma,\delta,T,q)M^q(\ep,N)T(1+\Theta)\label{eq:911},
\end{align}
where $\Theta$ is defined in Proposition~\ref{p:12}. Thanks to the same proposition,~\eqref{eq:910} is finite. The
scaling~\eqref{eq:412} also implies that $\Theta$ is bounded in $\ep,N$. We can apply the Gronwall inequality to~\eqref{eq:911} to
deduce that
\begin{align*}
  \mean{\sup_{s\in[0,T]}{\left\|X_{\ep,\delta}(s)-Z(s)\right\|^q_{\mathcal{W}}}}
  \leq C(\sigma,\delta,T,q)M^q(\ep,N)T(1+\theta)e^{C(T,q,V_{\per})}\rightarrow 0\qquad\mbox{as }\ep\rightarrow 0, \, N\rightarrow \infty.
\end{align*}
Chebyshev's inequality gives the result.
\end{proof}

The prescribed scaling in $N,\ep$ stated in Proposition~\ref{p:13} is compatible with the scalings of Propositions~\ref{p:1}
and~\ref{p:22}, and Theorem~\ref{thm:1}. See also Remark~\ref{rem:2}.

\subsection{Main existence and uniqueness result}
\label{ss:100}
We now turn to the key existence and uniqueness result for the regularised Dean--Kawasaki model~\eqref{eq:501}, or
equivalently~\eqref{eq:420}.

\begin{rem}
  \label{rem:100}
  Let us fix $\eta>\delta>0$. We first notice that, for a deterministic initial condition $X_0=(\rho_0,j_0)\in\mathcal{W}$ such
  that~\eqref{eq:320} is satisfied, there exists $T=T(X_0)\in(0,\infty]$ such that the solution $Z$ to~\eqref{eq:413} satisfies
  \begin{align*}
    Z(t,x)\geq \delta + (\eta-\delta)/2,\quad \quad\mbox{ for all } x\in D,\,\mbox{ for all } t\in[0,T).
  \end{align*}
  This is implied by the time-continuity of $Z$ with respect to the $\mathcal{W}$-norm, and by Proposition~\ref{p:10}.
\end{rem}

\begin{proof}[Proof of Theorem~\ref{thm:100}] 
Fix $\delta$ so that $0<\delta<\eta$ and consider $T(X_0)$ as indicated in Remark~\ref{rem:100}. Proposition~\ref{p:11} provides
existence of a solution $X_{\ep,\delta}$ to~\eqref{eq:318}.  For some $q>2$, we rely on Proposition~\ref{p:10} and write
\begin{align*}
  & \mathbb{P}\!\left(\sup_{t\in[0,T(X_0)]}\!\left\| X_{\ep,\delta}(t)-Z(t)\right\|_{C^0_{\per}(D)\times C^0_{\per}(D)}
    \geq \frac{\eta-\delta}{2}\right) \\
  & \quad = \mathbb{P}\!\left(\sup_{t\in[0,T(X_0)]}\!\left\| 
    X_{\ep,\delta}(t)-Z(t)\right\|^q_{C^0_{\per}(D)\times C^0_{\per}(D)}\geq \frac{(\eta-\delta)^q}{2^q}\right) \\
    & \quad \leq \mathbb{P}\left(\sup_{t\in[0,T(X_0)]}\left\| X_{\ep,\delta}(t)-Z(t)\right\|^q_{\mathcal{W}}
    \geq \frac{C^{-q}(\eta-\delta)^q}{2^q}\right)\leq \nu,
\end{align*}
where the last inequality holds for $\ep$ small enough (or equivalently $N$ big enough), thanks to Proposition~\ref{p:13}. It
follows that
\begin{equation*}
  \mathbb{P}\left(X_{\ep,\delta}(x,t)\geq \delta,\mbox{ for all } t\in[0,T(X_0)),\mbox{ for all } x\in D\right)\geq 1-\nu.
\end{equation*} 
This implies that
$\mathbb{P}\left(B_{N,\delta}(X_{\ep,\delta})=B_{N}(X_{\ep,\delta}),\mbox{ for all } t\in[0,T(X_0))\right)\geq 1-\nu$. We take
$X_{\ep}:=X_{\ep,\delta}$, and employ the existence and uniqueness results from Proposition~\ref{p:11} to conclude the proof.
\end{proof}

The dependence of $T$ on $X_0$ is yet to be properly investigated. In the special case of constant initial data
$X_0=(\rho_0,j_0)=(C,0)$, for some $C>\delta>0$, the solution is stationary, hence we can pick any finite $T(X_0)$.

\begin{rem}
  \label{rem:101}
  We have relied on scalings of type $N\ep^\theta=1$ (or $N\ep^\theta\geq 1$), for some $\theta>0$, to prove several results
  throughout
  the paper. Some of these scalings could be improved (i.e., $\theta$ could be lowered) in at least two points, specifically:\vspace{0.5 pc}\\
  (a) \emph{Tightness of $\{\rho_{\ep}\}_{\ep}$, Proposition~\ref{p:1}}: We relied on the compact embedding $H^1(D)\subset L^2(D)$
  to show that the initial conditions $\{\rho_{\ep}(\cdot,0)\}_{\ep}$ are tight in $L^2$. If one uses the compact embedding
  $H^{1/2+\delta/2}(D)\subset L^2(D)$ instead, for some $\delta\in(0,1)$, the scaling is less demanding, as
  $\|w_{\ep}(\cdot)\|_{H^{1/2+\delta/2}}\propto \ep^{-2-\delta}$.

  In addition, the time regularity estimate can be improved by computing the expectation first in the second-to-last inequality
  of~\eqref{eq:12}. In this case, the estimate proceeds with the bound
  \begin{align*} \underbrace{1-\frac{\sqrt{4\pi\epsilon^2}}{\sqrt{2\pi(2\epsilon^2+V_{s,t})}}}_{=:T_1}
+\underbrace{\frac{\sqrt{4\pi\epsilon^2}}{\sqrt{2\pi(2\epsilon^2+V_{s,t})}}\left(1-\exp\left\{-\frac{\mu_{s,t}^2}{2(2\epsilon^2+V_{s,t})}\right\}\right)}_{=:T_2}
\end{align*}
where \begin{align*}
\mu_{s,t} & :=\mathbb{E}\left[q(t)-q(s)\right]\leq C|t-s|, \qquad V_{s,t} := \mbox{Var}\left[q(t)-q(s)\right] \leq C|t-s|^2.
\end{align*} 
It is not difficult to bound $T_1$ and $T_2$ by $C\ep^{-1-\delta}|t-s|^{1+\delta}$, where $\delta$ can be chosen in $(0,1]$. Overall, the scaling $N\ep^{2+\delta}$, for some $\delta\in(0,1]$, is sufficient to provide tightness of $\{\rho_{\ep}\}_{\ep}$. We believe that similar arguments could be applied to $\{j_{\ep}\}_{\ep}$ and $\{j_{2,\ep}\}_{\ep}$ as well.\vspace{0.5 pc}\\
(b) \emph{Functional setting of Section~\ref{s:5}}. If we redefine $\mathcal{W}$ as $\mathcal{W}:=H^{1/2+\delta/2}_{\per}(D)\times H^{1/2+\delta/2}_{\per}(D)$, this could lead to a better scaling in Lemma \ref{lem:20}, for analogous reasons to point~(a). This would then lead to a better scaling in Theorem~\ref{thm:100}.
\end{rem}


\appendix

\section{Gaussian tools}
\label{ap:1}

This appendix is devoted to a concise exposition of a few useful facts concerning Gaussian random variables.

\begin{mydef}\label{la:1}
  A \emph{Gaussian random vector} $X$ with mean $\mu\in\mathbb{R}^d$ and covariance matrix $\Sigma$, denoted as
  $X\sim\mathcal{N}(\mu,\Sigma)$, has the probability density function given by
  $\mathcal{G}(x,\mu,\Sigma)=\emph{\text{det}}(2\pi\Sigma)^{-1/2}\exp\left\{-\frac{1}{2}(x-\mu)^T\Sigma^{-1}(x-\mu)\right\}$.  In
  the real-valued case, i.e., for $X$ of mean $\mu$ and variance $\sigma^2$, the above is simply
  \begin{align*}
    \mathcal{G}(x,\mu,\sigma^2):=\frac{1}{\sqrt{2\pi\sigma^2}}\exp\left\{-\frac{(x-\mu)^2}{2\sigma^2}\right\}.
  \end{align*}
\end{mydef}

\begin{lemma}[Fourier Transform for Gaussians]
  \label{la:2}
  The Fourier transform of an $\mathbb{R}^d$-valued Gaussian random vector $Y\sim\mathcal{N}(\mu,\Sigma)$ is given by
  \begin{align*}
    \mathbb{R}^d\ni \xi\mapsto \mean{e^{-i\langle \xi,Y\rangle}}
    =\exp\left\{-i\langle \mu,\xi\rangle-\frac{1}{2}\langle\xi,\Sigma\xi\rangle\right\}.
  \end{align*}
\end{lemma}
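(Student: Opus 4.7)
The plan is to reduce the multivariate computation to the one-dimensional standard Gaussian case via an affine change of variables, and then compute the one-dimensional integral by completing the square. First, writing $Y = \mu + \Sigma^{1/2} Z$ where $Z \sim \mathcal{N}(0, I_d)$ and $\Sigma^{1/2}$ is the symmetric positive semi-definite square root of $\Sigma$, we have
\begin{equation*}
\mathbb{E}\!\left[e^{-i\langle \xi, Y\rangle}\right] = e^{-i\langle \mu,\xi\rangle}\,\mathbb{E}\!\left[e^{-i\langle \xi,\Sigma^{1/2}Z\rangle}\right] = e^{-i\langle \mu,\xi\rangle}\,\mathbb{E}\!\left[e^{-i\langle \Sigma^{1/2}\xi, Z\rangle}\right],
\end{equation*}
using the symmetry of $\Sigma^{1/2}$. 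This isolates the mean prefactor $e^{-i\langle\mu,\xi\rangle}$ and reduces the task to computing the characteristic function of a standard Gaussian vector evaluated at $\eta := \Sigma^{1/2}\xi$.

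Next, since the components of $Z=(Z_1,\dots,Z_d)$ are independent standard normals, the expectation factorises as
\begin{equation*}
\mathbb{E}\!\left[e^{-i\langle \eta, Z\rangle}\right] = \prod_{k=1}^{d} \mathbb{E}\!\left[e^{-i\eta_k Z_k}\right].
\end{equation*}
For each one-dimensional integral I would complete the square: writing
\begin{equation*}
-i\eta_k z - \tfrac{1}{2}z^2 = -\tfrac{1}{2}(z+i\eta_k)^2 - \tfrac{1}{2}\eta_k^2,
\end{equation*}
so that
\begin{equation*}
\mathbb{E}\!\left[e^{-i\eta_k Z_k}\right] = \frac{1}{\sqrt{2\pi}}\int_{\mathbb{R}} e^{-\frac{1}{2}(z+i\eta_k)^2}\m z \cdot e^{-\eta_k^2/2}.
\end{equation*}
A standard contour-shift argument (closing a rectangular contour in $\mathbb{C}$, using the decay of $e^{-z^2/2}$ at infinity and Cauchy's theorem) shows that the shifted integral equals the real Gaussian integral $\sqrt{2\pi}$. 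Hence $\mathbb{E}[e^{-i\eta_k Z_k}] = e^{-\eta_k^2/2}$, and taking the product yields $\mathbb{E}[e^{-i\langle\eta,Z\rangle}] = e^{-\|\eta\|^2/2}$.

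Finally, substituting back $\eta = \Sigma^{1/2}\xi$ gives $\|\eta\|^2 = \langle \Sigma^{1/2}\xi, \Sigma^{1/2}\xi\rangle = \langle \xi, \Sigma\xi\rangle$ by symmetry of $\Sigma^{1/2}$, so combining with the prefactor produces the claimed expression $\exp\{-i\langle\mu,\xi\rangle - \tfrac{1}{2}\langle\xi,\Sigma\xi\rangle\}$. The only nontrivial step is the contour shift justifying $\int_{\mathbb{R}} e^{-(z+ia)^2/2}\m z = \sqrt{2\pi}$ for real $a$; this is the sole analytical obstacle, and it is standard. Note that when $\Sigma$ is only positive semi-definite (degenerate case), the square root $\Sigma^{1/2}$ still exists and the same argument goes through, so the statement covers the degenerate case as well, even though Definition~\ref{la:1} implicitly assumes non-degeneracy via the density.
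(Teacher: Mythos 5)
Your proof is correct. Note, however, that the paper does not prove this lemma at all: it is stated as a standard fact and attributed to \cite[Chapter 16]{Jacod2003b}, so there is no internal argument to compare against; your write-up simply supplies the classical computation that the reference contains. The route you take (affine reduction $Y=\mu+\Sigma^{1/2}Z$, factorisation over independent coordinates, completing the square, and a contour shift to evaluate $\int_{\mathbb{R}}e^{-(z+ia)^2/2}\,\m z=\sqrt{2\pi}$) is sound; a common variant, which avoids complex analysis entirely, is to compute the one-dimensional characteristic function $\phi(u)=\mean{e^{-iuZ_1}}$ by differentiating under the integral sign and integrating by parts to get $\phi'(u)=-u\phi(u)$, $\phi(0)=1$, hence $\phi(u)=e^{-u^2/2}$. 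Two minor points worth flagging: if one starts strictly from Definition~\ref{la:1}, which defines the Gaussian through its density with $\Sigma^{-1}$, then the representation $Y=\mu+\Sigma^{1/2}Z$ should be justified by a (routine) change of variables showing that $\mu+\Sigma^{1/2}Z$ indeed has density $\mathcal{G}(\cdot,\mu,\Sigma)$; and your closing remark about the positive semi-definite case is accurate, but in that degenerate setting the law must be \emph{defined} via the affine representation (or the characteristic function itself) rather than via the density, which is consistent with how the paper uses the lemma.
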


\begin{lemma}[Conditional law for Gaussian vectors]
  \label{la:3}
  Let $b\in\mathbb{R}$. For a bivariate Gaussian random vector $Y=(Y_1,Y_2)$, the conditional density of $Y_1$ given
  $Y_2=b$ is
  \begin{align*}
    f_{Y_1|Y_2}(y_1|Y_2=b)=\mathcal{G}\left(y_1,\mu_{Y_1}+\frac{\sigma_{Y_1}}{\sigma_{Y_2}}
    \chi(b-\mu_{Y_2}),(1-\chi^2)\sigma^2_{Y_1}\right),
  \end{align*}
  where $\chi=\mbox{\emph{Corr}}(Y_1,Y_2)$.
\end{lemma}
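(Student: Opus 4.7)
The plan is to avoid any direct computation with the bivariate Gaussian density and instead reduce the statement to the (simpler) fact that, for jointly Gaussian random variables, zero correlation implies independence. The key is to linearly decompose $Y_1$ into a component proportional to $Y_2$ plus an independent Gaussian remainder; the conditional distribution then becomes a mere translation of the remainder's law.

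First, I would look for $a \in \mathbb{R}$ such that $Z := Y_1 - a Y_2$ is uncorrelated with $Y_2$. A direct calculation gives
\begin{equation*}
\mathrm{Cov}(Y_1 - a Y_2, Y_2) \;=\; \chi\,\sigma_{Y_1}\sigma_{Y_2} - a\,\sigma_{Y_2}^2,
\end{equation*}
which vanishes precisely for $a = \chi\,\sigma_{Y_1}/\sigma_{Y_2}$. Since $(Y_1, Y_2)$ is jointly Gaussian, the linear image $(Z, Y_2)$ is also jointly Gaussian, and hence $Z$ and $Y_2$ are in fact independent (not merely uncorrelated). Computing moments, one finds $\mathbb{E}[Z] = \mu_{Y_1} - a\,\mu_{Y_2}$ and
\begin{equation*}
\mathrm{Var}(Z) \;=\; \sigma_{Y_1}^2 - 2a\,\chi\,\sigma_{Y_1}\sigma_{Y_2} + a^2\,\sigma_{Y_2}^2 \;=\; (1 - \chi^2)\,\sigma_{Y_1}^2.
\end{equation*}

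Next, I would write $Y_1 = Z + a Y_2$ and condition on $\{Y_2 = b\}$. By the independence established above, the conditional law of $Y_1$ given $Y_2 = b$ equals the unconditional law of $Z + ab$, namely $\mathcal{N}\bigl(\mu_{Y_1} + a(b - \mu_{Y_2}),\,(1-\chi^2)\sigma_{Y_1}^2\bigr)$. Substituting $a = \chi\,\sigma_{Y_1}/\sigma_{Y_2}$ recovers exactly the density $\mathcal{G}\bigl(y_1,\,\mu_{Y_1} + (\sigma_{Y_1}/\sigma_{Y_2})\chi(b - \mu_{Y_2}),\,(1-\chi^2)\sigma_{Y_1}^2\bigr)$ claimed in the statement.

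There is no real obstacle here; the only subtle point is the appeal to \emph{uncorrelated $\Rightarrow$ independent} for jointly Gaussian vectors, which would fail in general but holds in the Gaussian setting (and can itself be verified by inspecting the joint characteristic function via Lemma~\ref{la:2}, since the exponent factors once the cross term is eliminated). An alternative, more computational route would be to form the ratio $f_{Y_1, Y_2}(y_1, b)/f_{Y_2}(b)$ directly from the bivariate Gaussian density, invert the $2\times 2$ covariance matrix, and complete the square in $y_1$; this yields the same answer but with more bookkeeping, so the decomposition approach seems preferable.
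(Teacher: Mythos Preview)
Your argument is correct and complete. The paper does not actually supply a proof of this lemma; it simply cites \cite[Section 4.7]{Bertsekas2002a} as a reference. Your decomposition approach (regressing $Y_1$ on $Y_2$ and using that uncorrelated jointly Gaussian variables are independent) is one of the standard textbook derivations and is arguably cleaner than the direct density-ratio computation you mention as an alternative, so there is nothing to add or correct.
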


Lemma~\ref{la:2} can be found in~\cite[Chapter 16]{Jacod2003b}, and Lemma~\ref{la:3} can be found in~\cite[Section
4.7]{Bertsekas2002a}.

\begin{lemma}[Multiplication of Gaussian kernels]
  \label{la:4} 
  Given $f(x):=\mathcal{G}(x,\mu_f,\sigma^2_f)$ and $g(x):=\mathcal{G}(x,\mu_g,\sigma^2_g)$, we have the multiplication rule
  \begin{align*}
    f(x)g(x)=\mathcal{G}(x,\mu_{fg},\sigma^2_{fg})\frac{1}{\sqrt{2\pi(\sigma^2_f+\sigma^2_g)}}
    \exp\left\{-\frac{(\mu_f-\mu_g)^2}{2(\sigma^2_f+\sigma^2_g)}\right\},
  \end{align*}
  where we have set
  \begin{align*}
    \mu_{fg}:=\frac{\mu_f\sigma^2_g+\mu_g\sigma^2_f}{\sigma^2_f+\sigma^2_g},\qquad 
    \sigma^2_{fg}:=\frac{\sigma^2_f\sigma^2_g}{\sigma^2_f+\sigma^2_g}.
  \end{align*}
\end{lemma}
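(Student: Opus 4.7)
The plan is a direct algebraic calculation — no probabilistic or analytic machinery is needed. I would start by writing out $f(x)g(x)$ explicitly using Definition~\ref{la:1}, obtaining the product of normalising constants $(2\pi\sigma_f\sigma_g)^{-1}$ multiplied by a single exponential whose argument is
$$-\frac{1}{2}\left[\frac{(x-\mu_f)^2}{\sigma_f^2} + \frac{(x-\mu_g)^2}{\sigma_g^2}\right].$$
The goal is then to rewrite this as a Gaussian density in $x$ times an $x$-independent remainder, read off $\mu_{fg}$ and $\sigma_{fg}^2$ from the Gaussian factor, and match the leftover prefactor against the claimed expression.

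To this end, I would put the exponent over the common denominator $2\sigma_f^2\sigma_g^2$ and expand, producing a quadratic in $x$ with leading coefficient $(\sigma_f^2+\sigma_g^2)$, linear coefficient $-2(\mu_f\sigma_g^2+\mu_g\sigma_f^2)$, and constant term $\sigma_g^2\mu_f^2+\sigma_f^2\mu_g^2$. Completing the square in $x$ singles out the critical point
$$\mu_{fg}=\frac{\mu_f\sigma_g^2+\mu_g\sigma_f^2}{\sigma_f^2+\sigma_g^2},$$
exactly as required, and produces a remainder $\sigma_g^2\mu_f^2+\sigma_f^2\mu_g^2-(\sigma_f^2+\sigma_g^2)\mu_{fg}^2$. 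The only slightly fiddly step is verifying the identity
$$\sigma_g^2\mu_f^2+\sigma_f^2\mu_g^2 - \frac{(\mu_f\sigma_g^2+\mu_g\sigma_f^2)^2}{\sigma_f^2+\sigma_g^2} = \frac{\sigma_f^2\sigma_g^2(\mu_f-\mu_g)^2}{\sigma_f^2+\sigma_g^2},$$
which, after clearing the common denominator, reduces to the cancellation of the $\sigma_f^4\mu_g^2$ and $\sigma_g^4\mu_f^2$ terms on both sides, leaving only the cross product $-2\sigma_f^2\sigma_g^2\mu_f\mu_g$ required on the right.

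Recognising $\sigma_{fg}^2 = \sigma_f^2\sigma_g^2/(\sigma_f^2+\sigma_g^2)$, the exponent then takes the form $-(x-\mu_{fg})^2/(2\sigma_{fg}^2)$ minus the constant $(\mu_f-\mu_g)^2/\bigl(2(\sigma_f^2+\sigma_g^2)\bigr)$. Finally, the prefactor $(2\pi\sigma_f\sigma_g)^{-1}$ splits as
$$\frac{1}{\sqrt{2\pi\sigma_{fg}^2}}\cdot\frac{1}{\sqrt{2\pi(\sigma_f^2+\sigma_g^2)}},$$
since $\sigma_{fg}^2(\sigma_f^2+\sigma_g^2)=\sigma_f^2\sigma_g^2$ by definition of $\sigma_{fg}^2$. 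Assembling these pieces yields the claimed identity. There is no substantive obstacle: the proof is entirely bookkeeping, and the completing-the-square identity above is the only place where a brief computation is called for.
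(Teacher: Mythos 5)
Your proof is correct and is precisely the ``simple algebraic computation'' the paper alludes to (the paper does not spell out a proof of Lemma~\ref{la:4}, only noting it follows by elementary algebra): writing out the product, completing the square to identify $\mu_{fg}$ and $\sigma_{fg}^2$, and matching the constant term and prefactor is exactly the intended argument, and each of your intermediate identities checks out.
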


\begin{lemma}[Moments of Gaussian random variables]
  \label{la:5} 
  Let $X\sim\mathcal{N}(\mu,\sigma^2)$. For $n\in\mathbb{N}$, we have 
  \begin{align*}
    M(n,\mu,\sigma^2) & :=\mathbb{E}\left[|X|^n\right] \leq C(n)\left\{\mu^n+\sigma^n(n-1)!!\right\},\\
    m(n,\mu,\sigma^2) & :=\mathbb{E}\left[X^n\right] =\sum_{j\in\mathbb{N},\,2j\leq n}{(2j-1)!!\binom{n}{2j}\sigma^{2j}\mu^{n-2j}},
  \end{align*}
  where $n!!:=\sum_{k=0}^{\lceil n/2\rceil-1}{(n-2k)}$, for $n\in\mathbb{N}$.
\end{lemma}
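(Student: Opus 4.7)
The plan is to reduce everything to the standard normal $Z \sim \mathcal{N}(0,1)$ via the affine change of variables $X = \mu + \sigma Z$, and then to exploit the well-known structure of Gaussian moments.

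For the identity on the plain moments $m(n,\mu,\sigma^2)$, I would first apply the binomial theorem to $X^n = (\mu + \sigma Z)^n$ and take expectations, obtaining
\begin{align*}
\mathbb{E}[X^n] = \sum_{k=0}^{n} \binom{n}{k} \mu^{n-k}\sigma^{k}\,\mathbb{E}[Z^k].
\end{align*}
The crux is then the identity $\mathbb{E}[Z^{2j+1}]=0$ and $\mathbb{E}[Z^{2j}]=(2j-1)!!$. The vanishing of odd moments is immediate from the symmetry $Z \sim -Z$. The value of the even moments can be proved either by integration by parts (writing $z\cdot z^{2j-1}e^{-z^2/2} = -(e^{-z^2/2})' z^{2j-1}$ to obtain the recursion $\mathbb{E}[Z^{2j}]=(2j-1)\mathbb{E}[Z^{2j-2}]$ and unrolling it), or by Taylor-expanding the characteristic function $\mathbb{E}[e^{i\xi Z}]=e^{-\xi^2/2}$ guaranteed by Lemma~\ref{la:2} and comparing the coefficients of $\xi^{2j}$ on both sides. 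Keeping only the $k=2j$ terms with $2j\leq n$ then yields the displayed formula.

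For the bound on the absolute moments $M(n,\mu,\sigma^2)$, I would use the triangle inequality $|X|\leq |\mu|+\sigma|Z|$ together with the convexity estimate $(a+b)^n \leq 2^{n-1}(a^n+b^n)$ for $a,b\geq 0$, giving
\begin{align*}
|X|^n \leq 2^{n-1}\bigl(|\mu|^n + \sigma^n |Z|^n\bigr).
\end{align*}
It remains to bound $\mathbb{E}[|Z|^n]$. For even $n$, $\mathbb{E}[|Z|^n]=\mathbb{E}[Z^n]=(n-1)!!$ by the previous step. For odd $n$, a direct computation with the substitution $u=z^2/2$ gives $\mathbb{E}[|Z|^n] = \tfrac{2}{\sqrt{2\pi}}\,2^{(n-1)/2}\,\Gamma\bigl(\tfrac{n+1}{2}\bigr)$, which is comparable to $(n-1)!!$ up to an $n$-dependent constant. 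Absorbing this universal constant into $C(n)$ yields $\mathbb{E}[|Z|^n]\leq C(n)\,(n-1)!!$ for all $n\in\mathbb{N}$, and the stated bound follows.

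There is no real obstacle here: the statement is a direct consequence of the closed form for moments of the standard normal together with an affine change of variables. The only mild point of care is handling odd $n$ in the absolute-moment bound, where the double factorial $(n-1)!!$ refers to an even argument and the comparison with $\mathbb{E}[|Z|^n]$ must be absorbed into the constant $C(n)$.
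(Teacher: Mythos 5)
Your proposal is correct and follows essentially the same route the paper indicates: splitting $X$ into its mean plus a centred Gaussian (here via $X=\mu+\sigma Z$) and invoking the known moments of the zero-mean Gaussian, with your binomial expansion playing the role of the induction on $n$ that the paper mentions. The handling of the odd-$n$ absolute moments by absorbing the $\Gamma$-function constant into $C(n)$ is fine.
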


Lemma~\ref{la:5} can be proved by induction on $n$, by splitting $X$ as $(X-\mu)+\mu$ and using the results for moments of
zero-mean Gaussian random variables. Lemma~\ref{la:4} follows from simple algebraic computations.

\begin{lemma}[Ornstein-Uhlenbeck process]
  \label{la:6}
  Let $A,\Sigma\in\mathbb{R}^{2\times 2}$, and let $W$ be a bivariate Brownian motion. For any $t\in[0,T]$, set
  $\Phi(t):=e^{At}$.
  \begin{enumerate}
  \item \label{it:OU1}
    The stochastic equation
    \begin{align}
      \label{eq:131}
      \emph{\m} X(t)= AX(t)\emph{\m} t+\Sigma\emph{\m} W(t),\qquad X(0)=X_0
    \end{align}
    has a unique solution $X(t)=(X_1(t),X_2(t))$ explicitly given by
    \begin{align}
      \label{eq:132}
      X(t)=\Phi(t)X_0+\Phi(t)\int_{0}^{t}{\Phi^{-1}(s)\Sigma\emph{\m}W(s)}.
    \end{align}
  \item \label{it:OU2} If $X_0$ is a Gaussian random vector independent of $W$, then $X(t)$ is a Gaussian random vector for any
    $t\in[0,T]$.
  \item \label{it:OU3} With the same assumption as in~\ref{it:OU2}, if in addition $\emph{\mbox{Cov}}(X_0,X_0)$ is positive
    definite, then there exists $\nu>0$ such that $\mbox{\emph{Var}}(X^1(t))\geq \nu$ and $\mbox{\emph{Var}}(X^2(t))\geq \nu$, for
    any $t\in[0,T]$.
  \item \label{it:OU4} With the same assumption as in~\ref{it:OU3}, the following quantities are Lipschitz on $[0,T]$: the mean of
    $X_1(t)$ and $X_2(t)$, the variance of $X_1(t)$ and $X_2(t)$, the correlation between $X_1(t)$ and $X_2(t)$.
  \end{enumerate}
\end{lemma}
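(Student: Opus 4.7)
The plan is to proceed through the four claims in order, exploiting the explicit formula in \ref{it:OU1} which reduces everything else to linear algebra and continuity arguments on a compact interval.

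For \ref{it:OU1}, I would apply It\^o's formula to the product $\Phi(t)Y(t)$, where $Y(t):=X_0+\int_0^t \Phi^{-1}(s)\Sigma\,\mbox{d}W(s)$, and use the fact that $\frac{\mbox{d}}{\mbox{d}t}\Phi(t)=A\Phi(t)$ to check directly that the right-hand side of~\eqref{eq:132} satisfies~\eqref{eq:131}. Uniqueness is standard: the difference $D(t)$ of two solutions satisfies the deterministic linear ODE $\dot D=AD$ with $D(0)=0$, so $D\equiv 0$.

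For \ref{it:OU2}, I would observe that in~\eqref{eq:132} the first term $\Phi(t)X_0$ is a deterministic linear transformation of a Gaussian vector, and the second term is a Wiener integral of a deterministic, locally bounded matrix-valued integrand against Brownian motion, hence is a zero-mean Gaussian vector (this can be made rigorous via Riemann sum approximations and Lemma~\ref{la:2}). Independence of $X_0$ and $W$ then makes $X(t)$ the sum of two independent Gaussian vectors, hence Gaussian.

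For \ref{it:OU3}, I would write the covariance as
\begin{equation*}
C(t):=\mbox{Cov}(X(t),X(t))=\Phi(t)C_0\Phi(t)^T+\Phi(t)\!\left(\int_0^t\Phi^{-1}(s)\Sigma\Sigma^T(\Phi^{-1}(s))^T\,\mbox{d}s\right)\!\Phi(t)^T,
\end{equation*}
where $C_0:=\mbox{Cov}(X_0,X_0)\succ 0$. Since $\Phi(t)=e^{At}$ is invertible for every $t$, the first summand is positive definite; moreover, as a continuous matrix-valued function on the compact interval $[0,T]$, its smallest eigenvalue $\lambda_{\min}(t)$ is continuous and strictly positive, hence attains a minimum $\nu>0$. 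The second summand is positive semi-definite, so $C(t)\succeq \nu I$, which gives $\mbox{Var}(X^i(t))=e_i^T C(t)e_i\ge \nu$ for $i=1,2$. The main (mild) obstacle is precisely pinning down this uniform-in-$t$ lower bound, which requires the compactness of $[0,T]$ together with the observation that positive definiteness is preserved under conjugation by invertible matrices.

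For \ref{it:OU4}, I would note that both $\mathbb{E}[X(t)]=\Phi(t)\mathbb{E}[X_0]$ and each entry of $C(t)$ are $C^1$ functions of $t$ on $[0,T]$ (since $\Phi$ is smooth and the integrals in the formula above are differentiable with bounded derivatives on $[0,T]$), hence Lipschitz on the compact interval $[0,T]$. For the correlation $\chi(t)=C_{12}(t)/\sqrt{C_{11}(t)C_{22}(t)}$, part \ref{it:OU3} guarantees that the denominator is bounded below by $\nu$, so $\chi$ is a smooth function of smooth quantities, hence Lipschitz on $[0,T]$. No further difficulty arises.
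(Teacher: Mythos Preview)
Your proposal is correct and follows essentially the same route as the paper. The only cosmetic differences are that the paper cites a standard existence/uniqueness theorem for \ref{it:OU1} rather than arguing uniqueness directly, and for \ref{it:OU3}--\ref{it:OU4} the paper works with the specific vectors $y=(1,0),(0,1)$ and explicit covariance differences rather than invoking the minimum eigenvalue and $C^1\Rightarrow$ Lipschitz on compacts; the underlying ideas are identical.
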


\begin{proof}
\emph{Part~\ref{it:OU1}}. Existence and uniqueness of a solution is granted by~\cite[Theorem 5.2.1]{Oksendal2003a}. It is
straightforward to see that~\eqref{eq:132} is indeed the solution by computing the It\^o-differential of $X(t)$.

\emph{Part~\ref{it:OU2}}. The integrand $\Phi^{-1}(s)\Sigma$ being deterministic, we have that
$\Phi(t)\int_{0}^{t}{\Phi^{-1}(s)\Sigma\m W(s)}$ is a Gaussian process. In addition, $\Phi(t)X_0$ is a Gaussian vector by
linearity. Stochastic independence of $X_0$ and $W$ grants that the sum of the aforementioned two vectors is a Gaussian vector.

\emph{Part~\ref{it:OU3}}. Thanks to the independence of $W$ and $X_0$, we can limit ourselves to studying
$\mbox{Cov}(\Phi(t)X_0,\Phi(t)X_0)$. We observe that
\begin{align*}
  \mbox{Cov}(\Phi(t)X_0,\Phi(t)X_0)=\Phi(t)\mbox{Cov}(X_0,X_0)\Phi^{T}(t)=:B(t).
\end{align*}
Since $\mbox{Cov}(X_0,X_0)$ is definite positive, this entails that the continuous function $t\mapsto y^TB(t)y$ is strictly
positive on $[0,T]$ for any given $y\in\mathbb{R}^2\setminus{\{(0,0)\}}$. The claim then follows by taking $y=(1,0)$ and
$y=(0,1)$.

\emph{Part~\ref{it:OU4}}. We notice that
\begin{align*}
  \left\|\mean{X(t)-X(s)}\right\|=\left\|\mean{\left(\Phi(t)-\Phi(s)\right)X_0}\right\|\leq C(A)\mean{\|X_0\|}|t-s|,
\end{align*}
and the Lipschitz property for the mean of $X_1(t)$ and $X_2(t)$ is settled. As for the variances, we compute
\begin{align}
  \label{eq:133}
  \mbox{Cov}(X(t),X(t))-\mbox{Cov}(X(s),X(s)) 
  & = \Phi(t)\left[\int_{0}^{t}{\Phi^{-1}(u)\Sigma\Sigma^{T}\Phi^{-T}(u)\m u}\right]\Phi^T(t)\nonumber\\
  & \quad - \Phi(s)\left[\int_{0}^{s}{\Phi^{-1}(u)\Sigma\Sigma^{T}\Phi^{-T}(u)\m u}\right]\Phi^T(s)\nonumber\\
  & \quad + \Phi(t)\mbox{Cov}(X_0,X_0)\Phi^{T}(t)-\Phi(s)\mbox{Cov}(X_0,X_0)\Phi^{T}(s),
\end{align}
and the Lipschitz property for the variance of $X_1(t)$ and $X_2(t)$ follows from the Lipschitz property for $\Phi(t)$ and
$\int_{0}^{t}{\Phi^{-1}(u)\Sigma\Sigma^{T}\Phi^{-T}(u)\m u}$. As for the correlation between $X_1(t)$ and $X_2(t)$, the Lipschitz
property can be derived by using the definition
\begin{align*}
  \mbox{Corr}(X_1(t),X_2(t)):=\frac{\mbox{Cov}(X_1(t),X_2(t))}{\sqrt{\mbox{Var}(X_1(t))\mbox{Var}(X_2(t))}}
\end{align*}
and observing that $\mbox{Var}(X_1(t))$, $\mbox{Var}(X_2(t))$ are bounded away from $0$ (by \ref{it:OU3}), and that
$\mbox{Var}(X_1(t))$, $\mbox{Var}(X_2(t))$, $\mbox{Cov}(X_1(t),X_2(t))$ are Lipschitz by~\eqref{eq:133}.
\end{proof}

\section{Auxiliary tools}
\label{sec:4}

We list and prove some auxiliary tools used repeatedly in the proofs of the main results of Section~\ref{s:4}. We start with time
regularity of Gaussian moments, under Assumption~(G), in Subsection~\ref{ss:1}. We deal with time regularity for the
Fokker--Planck equation~\eqref{eq:134} under Assumption~(NG) in Subsection~\ref{sec:12}. We estimate the second moment of
$\rho^{-1}_{\ep}(x,t)$, where $\rho_{\ep}(x,t)$ is defined in~\eqref{eq:102}, giving a proof for both Assumption~(G) and
Assumption~(NG), in Subsection~\ref{ss:2}.

\subsection{Time regularity of specific Gaussian moments}
\label{ss:1}

\begin{lemma}
  \label{lem:1}
  Let $T>0$, $n\in\mathbb{N}$, $c\geq 2$, $\nu>0$ be real numbers. Let $\mu,\sigma^2\colon[0,T]\rightarrow\mathbb{R}$ be Lipschitz
  functions, with Lipschitz constant $L$. Let $\mathcal{Q}_{n,t}(x)$ be a polynomial of degree $n$ in $x$, and Lipschitz
  coefficients in $t$, again with Lipschitz constant $L$. Assume that $\sigma^2(t)\geq \nu,\mbox{ for all } t\in[0,T]$. Then there
  exists $\beta>0$ such that
  \begin{equation*} \int_{\mathbb{R}}{\left|\mathcal{Q}_{n,t}(x)\mathcal{G}(x,\mu(t),\sigma^2(t))
        -\mathcal{Q}_{n,s}(x)\mathcal{G}(x,\mu(s),\sigma^2(s))\right|^c\emph{\m} x}\leq C|t-s|^{1+\beta},\qquad \mbox{ for all } s,t\in[0,T],
  \end{equation*}
  for a constant $C=C(T,\nu,L,c)$. In addition, if $p=0$, $c=2$, and $\mathcal{Q}_{n,t}$ is a constant, then $\beta=1$.
\end{lemma}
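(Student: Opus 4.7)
The plan is to bound the integrand pointwise in $x$ by a quantity of the form $C|t-s|\cdot(\text{polynomial in }x) \cdot\mathcal{G}(x,\mu_*,\sigma_*^2)$, raise to the $c$-th power, and integrate, using the fact that finite polynomial moments against $\mathcal{G}^c$ are uniformly controlled by $\nu$ and $T$. Write $F(x,t):=\mathcal{Q}_{n,t}(x)\mathcal{G}(x,\mu(t),\sigma^2(t))$ and split
\begin{align*}
F(x,t)-F(x,s) &= \bigl[\mathcal{Q}_{n,t}(x)-\mathcal{Q}_{n,s}(x)\bigr]\mathcal{G}(x,\mu(t),\sigma^2(t))\\
&\quad+\mathcal{Q}_{n,s}(x)\bigl[\mathcal{G}(x,\mu(t),\sigma^2(t))-\mathcal{G}(x,\mu(s),\sigma^2(s))\bigr]=:A(x)+B(x),
\end{align*}
and use $|A+B|^c\le 2^{c-1}(|A|^c+|B|^c)$ to treat the two summands separately.

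For $A(x)$, the Lipschitz hypothesis on the coefficients of $\mathcal{Q}_{n,t}$ gives $|\mathcal{Q}_{n,t}(x)-\mathcal{Q}_{n,s}(x)|\le L|t-s|P(|x|)$ for a polynomial $P$ of degree $n$ with coefficients depending only on $L,n$. Hence $|A(x)|^c\le L^c|t-s|^c P(|x|)^c\mathcal{G}(x,\mu(t),\sigma^2(t))^c$. Since $\mathcal{G}(x,\mu,\sigma^2)^c$ is proportional to the Gaussian density $\mathcal{G}(x,\mu,\sigma^2/c)$ (with a prefactor bounded uniformly by $\nu$), Lemma~\ref{la:5} and $\sigma^2(t)\ge\nu$ yield $\int_\mathbb{R} P(|x|)^c\mathcal{G}^c\,\m x\le C(T,\nu,L,c)$ uniformly in $t\in[0,T]$. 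This gives $\int_\mathbb{R} |A(x)|^c\,\m x\le C|t-s|^c$.

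For $B(x)$, I use the Lipschitz-hence-absolutely-continuous-and-a.e.-differentiable property of $\mu,\sigma^2$ together with
\begin{equation*}
\partial_\mu \mathcal{G}=\frac{x-\mu}{\sigma^2}\mathcal{G},\qquad \partial_{\sigma^2}\mathcal{G}=\left(\frac{(x-\mu)^2}{2\sigma^4}-\frac{1}{2\sigma^2}\right)\mathcal{G},
\end{equation*}
to write the Gaussian difference as $\int_s^t\!\bigl[\partial_\mu \mathcal{G}\cdot\mu'(u)+\partial_{\sigma^2}\mathcal{G}\cdot(\sigma^2)'(u)\bigr]\,\m u$ evaluated at $(\mu(u),\sigma^2(u))$, with $|\mu'|,|(\sigma^2)'|\le L$ a.e. Using $\sigma^2(u)\ge\nu$, this is pointwise bounded by $CL|t-s|\tilde{P}(|x-\mu(u)|)\mathcal{G}(x,\mu(u),\sigma^2(u))$ for a degree-$2$ polynomial $\tilde P$. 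Multiplying by $\mathcal{Q}_{n,s}(x)$ (polynomial of degree $n$), raising to the $c$-th power, and integrating gives, as before, a uniformly finite polynomial-times-$\mathcal{G}^c$ moment; the worst constant depends on $\|\mu\|_{L^\infty([0,T])}$ which is itself controlled by Lipschitz-ness of $\mu$ and the initial value. Thus $\int_\mathbb{R} |B(x)|^c\,\m x\le C|t-s|^c$.

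Combining both bounds yields $\int_\mathbb{R}|F(x,t)-F(x,s)|^c\,\m x\le C|t-s|^c$, so one may take $\beta:=c-1\ge 1$. The only subtlety is the routine verification that all the Gaussian-polynomial moment integrals are uniformly bounded on $[0,T]$, which is immediate from $\sigma^2(t)\ge\nu$ and the Lipschitz control on $\mu,\sigma^2$ and the coefficients of $\mathcal{Q}_{n,\cdot}$; the polynomial degree increase by $2$ from differentiating $\mathcal{G}$ and by $n$ from $\mathcal{Q}_{n,s}$ is absorbed harmlessly into the $L^c$ norm. The claimed special case ($n=0$, $c=2$, $\mathcal{Q}_{n,t}$ constant in $t$) has $A\equiv 0$ and only the $B$-contribution remains, which already delivers $|t-s|^c=|t-s|^2$, giving $\beta=1$ exactly.
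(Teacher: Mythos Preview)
Your argument is correct and in fact sharper and more elementary than the paper's. Both proofs start with the same additive split into the coefficient-difference term ($T_1$/$A$) and the Gaussian-difference term ($T_2$/$B$), and both handle the first term identically via Gaussian polynomial moments. The divergence is in the treatment of the Gaussian increment $\mathcal{G}(x,\mu(t),\sigma^2(t))-\mathcal{G}(x,\mu(s),\sigma^2(s))$: the paper passes to Fourier space via Parseval, estimates the $L^2$ norm of the difference there (using the explicit Gaussian characteristic function, a mean-value bound for $e^{-i\mu\xi}$, and a midpoint argument for the variance), and then recovers the $L^c$ estimate by a H\"older interpolation with an auxiliary exponent $\alpha\in(c-2,c-1)$, arriving at $\beta=c-\alpha-1\in(0,1)$. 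You instead differentiate $\mathcal{G}$ in its parameters, invoke absolute continuity of $\mu,\sigma^2$ to write the increment as a time integral, and bound it pointwise by $|t-s|$ times a degree-two polynomial against $\mathcal{G}$; after H\"older in time and Fubini this gives $\beta=c-1\ge 1$, which strictly improves the paper's exponent while avoiding Fourier analysis entirely.

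One presentational remark: your sentence ``this is pointwise bounded by $CL|t-s|\tilde P(|x-\mu(u)|)\mathcal{G}(x,\mu(u),\sigma^2(u))$'' leaves the integration variable $u$ dangling after you have already extracted the factor $|t-s|$. The clean way to phrase it is either to take $\sup_{u\in[s,t]}$ of the integrand (and then verify that this $u$-supremum, raised to the $c$-th power, still has finite polynomial-times-$\mathcal{G}^c$ moment uniformly on $[0,T]$), or, more directly, to apply H\"older in $u$ to obtain $|B(x)|^c\le |t-s|^{c-1}\int_s^t(\cdots)^c\,\m u$, then Fubini and bound the inner $x$-integral uniformly in $u$. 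Either route closes the argument; as written the step is a little compressed but not wrong.
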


\begin{proof} 
Because of the general inequality $\left|\sum_{i=0}^{n}{a_i}\right|^c\leq (n+1)^c\sum_{i=0}^{n}{\left|a_i\right|^c}$, it is
sufficient to prove the statement for each monomial composing $\mathcal{Q}_{n,t}(x)$. We can thus restrict ourselves to proving
the statement with the choice $\mathcal{Q}_{p,t}(x):=A(t)x^p$, for any $p\in\mathbb{N}$, and where $A$ is Lipschitz with constant
$L$.

We add and subtract relevant quantities in the integral we have to compute. As a result we get
\begin{align*}
  &\int_{\mathbb{R}}{\left|A(t)x^p\mathcal{G}(x,\mu(t),\sigma^2(t))-A(s)x^p\mathcal{G}(x,\mu(s),\sigma^2(s))\right|^c \m x}\\
  & \leq  2^c\underbrace{\int_{\mathbb{R}}{\left|(A(t)-A(s))x^p\mathcal{G}(x,\mu(t),\sigma^2(t))\right|^c \m x}}_{=:T_1}
    + 2^c\underbrace{\int_{\mathbb{R}}{\left|A(s)x^p\left(\mathcal{G}(x,\mu(t),\sigma^2(t))
    -\mathcal{G}(x,\mu(s),\sigma^2(s))\right)\right|^c \m x}}_{=:T_2}.
\end{align*} 
We estimate $T_1,T_2$ separately. Since $A$ is Lipschitz and $\sigma^2$ is bounded from below, we obtain
\begin{align*}
  T_1 & \leq  L^c|t-s|^c\int_{\mathbb{R}}{|x|^{cp}\mathcal{G}(x,\mu(t),\sigma^2(t))^c \m x} = 
        \frac{L^c}{c^{1/2}(2\pi\sigma^2(t))^{(c-1)/2}}\,M\left(cp,\mu(t),\frac{\sigma^2(t)}{c}\right)|t-s|^c\\
      & \leq \frac{L^c}{c^{1/2}(2\pi\nu)^{(c-1)/2}}\,C(T,p,c)|t-s|^c\leq C |t-s|^c,
\end{align*}
where we have also relied on Lemmas~\ref{la:4},~\ref{la:5}. In order to estimate $T_2$, we rewrite the integral as
\begin{align}
  \label{eq:1004}
  \int_{\mathbb{R}}{|A|^c(s)|x|^{cp}\!\left|\mathcal{G}(x,\mu(t),\sigma^2(t))
      -\mathcal{G}(x,\mu(s),\sigma^2(s))\right|^{\alpha}\!\cdot\!\left|\mathcal{G}(x,\mu(t),\sigma^2(t))
      -\mathcal{G}(x,\mu(s),\sigma^2(s))\right|^{c-\alpha}\m x}
\end{align}
for some $\alpha\in(c-2,c-1)$. We apply the H\"older inequality with conjugate exponents $\frac{2}{c-\alpha}$ and
$\frac{2}{2-c+\alpha}$ and obtain
\begin{align*}
  T_2 & \leq  \left(\int_{\mathbb{R}}{|A|^{2c/(2-c+\alpha)}(s)|x|^{2pc/(2-c+\alpha)}\!\left|\mathcal{G}(x,\mu(t),\sigma^2(t))
        -\mathcal{G}(x,\mu(s),\sigma^2(s))\right|^{2\alpha/(2-c+\alpha)}\m x}\right)^{(\alpha+2-c)/2}\\
      & \quad \times  \left(\int_{\mathbb{R}}{\left|\mathcal{G}(x,\mu(t),\sigma^2(t))
        -\mathcal{G}(x,\mu(s),\sigma^2(s))\right|^{2}\m x}\right)^{\frac{c-\alpha}{2}}.
\end{align*}

The first term can be controlled using the boundedness of $A$ and Lemmas~\ref{la:4},~\ref{la:5}, similarly to the argument for
$T_1$. We get
\begin{align*}
  &  \left(\int_{\mathbb{R}}{|A|^{2c/(2-c+\alpha)}(s)|x|^{2pc/(2-c+\alpha)}\!\left|\mathcal{G}(x,\mu(t),\sigma^2(t))
    -\mathcal{G}(x,\mu(s),\sigma^2(s))\right|^{2\alpha/(2-c+\alpha)}\m x}\right)^{(\alpha+2-c)/2} \\
  & \leq  C(A,c,p,\nu)\left\{M\left(\frac{2pc}{2-c+\alpha},\mu(t),\frac{\sigma^2(t)(2-c+\alpha)}{2\alpha}\right)
    +M\left(\frac{2pc}{2-c+\alpha},\mu(s),\frac{\sigma^2(s)(2-c+\alpha)}{2\alpha}\right)\right\}\\
  & \leq C(A,c,p,\nu,\alpha).
\end{align*}

As for the second term of the product bounding $T_2$, we rely on Fourier analysis and Taylor expansions. More precisely, we rely
on Parseval's equality, Lemma~\ref{la:2}, and some simple rearrangement to write
\begin{align*}
  & \int_{\mathbb{R}}{\left|\mathcal{G}(x,\mu(t),\sigma^2(t))-\mathcal{G}(x,\mu(s),\sigma^2(s))\right|^{2}\m x}
    =C\int_{\mathbb{R}}{\left|e^{-i\mu(t)\xi-\frac{1}{2}\sigma^2(t)\xi^2}-e^{-i\mu(s)\xi-\frac{1}{2}\sigma^2(s)\xi^2}\right|^{2}\m\xi}\\
  & \quad \leq  C \int_{\mathbb{R}}{\left|\left\{e^{-i\mu(t)\xi}-e^{-i\mu(s)\xi}\right\}e^{-\frac{1}{2}\sigma^2(t)\xi^2}\right|^{2}\m\xi} 
    + C\int_{\mathbb{R}}{\left|e^{-i\mu(s)\xi}\left\{e^{-\frac{1}{2}\sigma^2(t)\xi^2}-e^{-\frac{1}{2}\sigma^2(s)\xi^2}\right\}\right|^{2}\m\xi}\\
  & \quad \leq C \underbrace{\int_{\mathbb{R}}{\left|\left\{e^{-i\mu(t)\xi}-e^{-i\mu(s)\xi}\right\}
    e^{-\frac{1}{2}\sigma^2(t)\xi^2}\right|^{2}\m\xi}}_{=:T_3}         
    +\,C\underbrace{\int_{\mathbb{R}}{\left|e^{-\frac{1}{2}\sigma^2(t)\xi^2}-e^{-\frac{1}{2}\sigma^2(s)\xi^2}\right|^{2}\m\xi}}_{=:T_4}.
\end{align*}

For $T_3$, we use the mean value theorem applied to the map $y\mapsto e^{iy}$ and the Lipschitz properties of $\mu$ to deduce
\begin{align*}
  T_3 \leq L^2|t-s|^2\int_{\mathbb{R}}{\xi^2e^{-\sigma^2(t)\xi^2}\m \xi} = L^2|t-s|^2\sqrt{2\pi}\left[\frac{1}{\sigma(t)^2}\right]^{3/2}
  \leq C(L,\nu)|t-s|^2,
\end{align*} 
where have used the definition of the Gaussian kernel and the bound $\sigma^2(t)\geq \nu$. We move on to $T_4$. We rely on
Lemma~\ref{la:4} and we expand the square in the integrand to deduce
\begin{align*}
  T_4=\sqrt{\frac{\pi}{\sigma^2(t)}}+\sqrt{\frac{\pi}{\sigma^2(s)}}-2\sqrt{\frac{2\pi}{\sigma^2(t)+\sigma^2(s)}}\leq C(\nu)\left|\sigma^2(t)-\sigma^2(s)\right|^2\leq C(\nu)|t-s|^2.
\end{align*}
The second inequality above is the Lipschitz property of $\sigma^2$, while the first inequality is justified by the midpoint
estimate $f(\sigma^2(t))+f(\sigma^2(s))-2f([\sigma^2(t)+\sigma^2(s)]/2)\leq C(\nu)\left|\sigma^2(t)-\sigma^2(s)\right|^2$ for the
function $f\colon [\nu,\infty)\rightarrow\mathbb{R}\colon y\mapsto\sqrt{\pi/y}$. Such expansion is a consequence of the
superposition of the second-order Taylor expansions (with Lagrange remainder) of $f(\sigma^2(t))$ and $f(\sigma^2(s))$ centred
around $[\sigma^2(t)+\sigma^2(s)]/2$.  Putting $T_3$ and $T_4$ together, we deduce
\begin{align*}
  \left(\int_{\mathbb{R}}{\left|\mathcal{G}(x,\mu(t),\sigma^2(t))-\mathcal{G}(x,\mu(s),\sigma^2(s))\right|^{2}\m x}\right)^{\frac{c-\alpha}{2}}
  \leq C|t-s|^{2\cdot\frac{c-\alpha}{2}}= C|t-s|^{c-\alpha}.
\end{align*}
We rename $\beta:=c-\alpha-1\in(0,1)$. We combine the above estimates and we obtain
\begin{equation*}
  \int_{\mathbb{R}}{\left|A(t)x^p\mathcal{G}(x,\mu(t),\sigma^2(t))-A(s)x^p\mathcal{G}(x,\mu(s),\sigma^2(s))\right|^c \m x}
  \leq C|t-s|^{1+\beta},
\end{equation*}
as desired. If $p=0$, $c=2$, and $\mathcal{Q}_{n,t}$ is a constant, then $\beta=1$. This is because $T_1=0$, and one may simply
take $\alpha=0$ in~\eqref{eq:1004}.
\end{proof}

\begin{lemma}
  \label{lem:2}
  Let $X\sim\mathcal{N}(\mu,\sigma^2)$ and let $x\in\mathbb{R}$. Then 
  \begin{align}
    \mathbb{E}\left[w_{\epsilon}(x-X)X^n\right] 
    & =  \mathcal{G}(x,\mu,\epsilon^2+\sigma^2)\cdot\!m\left(n,\frac{x\sigma^2+\mu\ep^2}{\ep^2+\sigma^2},
      \frac{\ep^2\sigma^2}{\ep^2+\sigma^2}\right),\quad n\in\mathbb{N}\cup\{0\}.\label{eq:4}\\
    \mathbb{E}\left[w'_{\epsilon}(x-X)X^n\right] 
    & = \frac{\mathcal{G}(x,\mu,\epsilon^2+\sigma^2)}{\ep^2}\sum_{k=0}^{n}{\binom{n}{k}x^{n-k}m\left(k+1,\frac{(\mu-x)\ep^2}
      {\ep^2+\sigma^2},\frac{\ep^2\sigma^2}{\ep^2+\sigma^2}\right)},\quad n\in\mathbb{N}\cup\{0\}.\label{eq:130}\\
    \mathbb{E}\left[w''_{\epsilon}(x-X)X^n\right] 
    & = \mathcal{G}(x,\mu,\epsilon^2+\sigma^2)\sum_{k=0}^{n}{\binom{n}{k}x^{n-k}
      \left\{-\frac{1}{\ep^2}m\left(k,\frac{(\mu-x)\ep^2}{\ep^2+\sigma^2},\frac{\ep^2\sigma^2}{\ep^2+\sigma^2}\right)\right.}\nonumber\\
    & \quad +\left.\frac{1}{\ep^4}m\left(k+2,\frac{(\mu-x)\ep^2}{\ep^2
      +\sigma^2},\frac{\ep^2\sigma^2}{\ep^2+\sigma^2}\right)\right\}\label{eq:2001}.
  \end{align}
\end{lemma}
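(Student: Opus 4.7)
The plan is to reduce all three identities to integration against a single ``combined'' Gaussian density via Lemma~\ref{la:4}. Observing that $w_\epsilon(x-y) = \mathcal{G}(y,x,\epsilon^2)$, I rewrite the first expectation as $\mathbb{E}[w_\epsilon(x-X) X^n] = \int_{\mathbb{R}} y^n \mathcal{G}(y,x,\epsilon^2) \mathcal{G}(y,\mu,\sigma^2)\,\m y$. Applying Lemma~\ref{la:4} with $\mu_f=x$, $\sigma^2_f=\epsilon^2$, $\mu_g=\mu$, $\sigma^2_g=\sigma^2$ factors the product into a $y$-independent prefactor $\mathcal{G}(x,\mu,\epsilon^2+\sigma^2)$ times $\mathcal{G}(y,\mu_{fg},\sigma^2_{fg})$, with $\mu_{fg} = (x\sigma^2+\mu\epsilon^2)/(\epsilon^2+\sigma^2)$ and $\sigma^2_{fg} = \epsilon^2\sigma^2/(\epsilon^2+\sigma^2)$. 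The remaining integral is, by definition, the plain $n$-th moment $m(n,\mu_{fg},\sigma^2_{fg})$, giving~\eqref{eq:4}.

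For the derivative identities I exploit the elementary relations $w'_\epsilon(z) = -z\epsilon^{-2}w_\epsilon(z)$ and $w''_\epsilon(z) = (z^2\epsilon^{-4}-\epsilon^{-2})w_\epsilon(z)$. Substituting $z = x-X$ and writing $X^n = ((X-x)+x)^n = \sum_{k=0}^n \binom{n}{k}(X-x)^k x^{n-k}$, each expectation reduces to a finite sum of quantities of the form $\mathbb{E}[(X-x)^j w_\epsilon(x-X)]$: with $j = k+1$ for~\eqref{eq:130}, and with $j\in\{k,k+2\}$ paired with the prefactors $-\epsilon^{-2}$ and $\epsilon^{-4}$ respectively for~\eqref{eq:2001}.

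The key algebraic identity, which I verify once and reuse, is the mean shift
\[
\mu_{fg}-x \;=\; \frac{x\sigma^2+\mu\epsilon^2}{\epsilon^2+\sigma^2}-x \;=\; \frac{(\mu-x)\epsilon^2}{\epsilon^2+\sigma^2}.
\]
Combining this with the same factoring argument as for~\eqref{eq:4}, and translating by $x$ inside the resulting integral against $\mathcal{G}(y,\mu_{fg},\sigma^2_{fg})$, yields
\[
\mathbb{E}[(X-x)^j w_\epsilon(x-X)] \;=\; \mathcal{G}(x,\mu,\epsilon^2+\sigma^2)\cdot m\!\left(j,\tfrac{(\mu-x)\epsilon^2}{\epsilon^2+\sigma^2},\tfrac{\epsilon^2\sigma^2}{\epsilon^2+\sigma^2}\right).
\]
Plugging this back into the binomial expansions produces~\eqref{eq:130} and~\eqref{eq:2001} term by term. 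There is no genuine obstacle here: the proof is purely computational, and the only bookkeeping subtlety is tracking the shifted mean $\mu_{fg}-x$ consistently across the three formulas.
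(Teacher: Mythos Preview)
Your proof is correct and follows the same route the paper indicates: it reduces each identity to the Gaussian product rule (Lemma~\ref{la:4}) and then reads off plain moments, exactly as the paper's one-line justification (``a straightforward application of multiplication properties for Gaussian kernels and Gaussian moments, as stated in Lemmas~\ref{la:4} and~\ref{la:5}'') suggests. Your explicit handling of $w'_\epsilon$ and $w''_\epsilon$ via $w'_\epsilon(z)=-z\epsilon^{-2}w_\epsilon(z)$, the binomial expansion of $X^n$ about $x$, and the mean-shift identity $\mu_{fg}-x=(\mu-x)\epsilon^2/(\epsilon^2+\sigma^2)$ simply spells out the bookkeeping the paper leaves implicit.
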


The proof of Lemma~\ref{lem:2} is a straightforward application of multiplication properties for Gaussian kernels and Gaussian
moments, as stated in Lemmas~\ref{la:4} and~\ref{la:5}.

\begin{rem}
  \label{rem:1}
  It is worth noticing that the right-hand-sides of~\eqref{eq:4}, \eqref{eq:130} and~\eqref{eq:2001} satisfy the requirements of
  Lemma~\ref{lem:1}. To see this, we notice that
  \begin{equation*}
    m\!\left(n,\frac{x\sigma^2+\mu\ep^2}{\ep^2+\sigma^2},\frac{\ep^2\sigma^2}{\ep^2+\sigma^2}\right)
  \end{equation*} 
  is a polynomial of degree $n$ (with $\ep$-dependent coefficients) in the variable $x$. For time dependent $\mu(t)$,
  $\sigma^2(t)$ satisfying the hypotheses of Lemma~\ref{lem:1}, it follows that $\ep^2+\sigma^2\geq \nu>0$ for any $\ep>0$. These
  facts imply that the right-hand-side of~\eqref{eq:4} can be written in the form
  $\mathcal{Q}_{\ep,n,t}(x)\mathcal{G}(x,\mu(t),\sigma^2(t)+\ep^2)$, where the polynomial $\mathcal{Q}_{\ep,n,t}(x)$ has
  time-Lipschitz coefficients whose Lipschitz constants are uniformly bounded as $\ep\rightarrow 0$. For these
  reasons,~\eqref{eq:4} satisfies the statement of Lemma~\ref{lem:1}, and the result of the application of Lemma~\ref{lem:1}
  on~\eqref{eq:4} is independent of $\ep$ as $\ep\rightarrow 0$. On a similar note, we notice that
  \begin{equation*}
    \sum_{k=0}^{n}{\binom{n}{k}x^{n-k}m\left(k+1;\frac{(\mu-x)\ep^2}{\ep^2+\sigma^2},\frac{\ep^2\sigma^2}{\ep^2+\sigma^2}\right)}
  \end{equation*}
  can be written as $\mathcal{Q}_{\ep,n,t}(x):=\ep^2\mathcal{P}_{\ep,n,t}(x)$, where the polynomial $\mathcal{P}_{\ep,n,t}(x)$ has
  time-Lipschitz coefficients whose Lipschitz constants are uniformly bounded as $\ep\rightarrow 0$. This is a consequence of the
  Gaussian moments of order at least one, for a Gaussian kernel with both mean $\frac{(\mu-x)\ep^2}{\ep^2+\sigma^2}$ and variance
  $\frac{\ep^2\sigma^2}{\ep^2+\sigma^2}$ featuring a multiplicative factor $\ep^2$. This factor can be cancelled out with that
  appearing in the right-hand-side of~\eqref{eq:130}, which can hence be written in the form
  $\mathcal{P}_{\ep,n,t}(x)\mathcal{G}(x,\mu(t),\sigma^2(t)+\ep^2)$.  For these reasons,~\eqref{eq:130} satisfies the statement of
  Lemma~\ref{lem:1}, and the result of the application of Lemma~\ref{lem:1} on~\eqref{eq:130} is independent of $\ep$ as
  $\ep\rightarrow 0$. Similar considerations apply for~\eqref{eq:2001}. The contents of this remark apply under Assumption~(G),
  for the time dependent $X$ being precisely the Langevin particle $q_i(t)$ satisfying~\eqref{eq:24}.\\
  In addition, the right-hand-sides of~\eqref{eq:4},~\eqref{eq:130} and~\eqref{eq:2001} are Lipschitz in time, with Lipschitz
  constant independent of $\ep$ (see discussion above) and $x$ (each one of the right-hand-sides being a product of a polynomial
  with a decaying exponential).
\end{rem}

\subsection{Fokker--Planck time regularity in the case of non-vanishing potential $V$}
\label{sec:12}

The contents of this subsection should be seen as the ``replacement'' of Lemma~\ref{lem:1}, Lemma~\ref{lem:2}, Remark~\ref{rem:1},
under Assumption~(NG). We consider the \emph{Fokker--Planck} equation associated with~\eqref{eq:24}, namely
\begin{equation}
  \label{eq:118}
  \left\{
    \begin{array}{l}
      \displaystyle \frac{\partial g}{\partial t}=-\nabla\cdot(g\mu)+\frac{\sigma^2}{2}\frac{\partial^2}{\partial p^2}g,  
      \vspace{0.2 pc}\\
      \displaystyle g(0,p,q)=g_0(p,q),
    \end{array}
  \right.
\end{equation}
where $g_0(p,q)$ is the law of $(q(0),p(0))$.

\begin{rem}
We comment on some consequences of~\cite[Theorem 0.1]{Herau2004a}. This result, among many things, implies the following bound
for the solution to~\eqref{eq:134}
\begin{align}
  \label{eq:135}
  \|\overline{g}(t,\cdot,\cdot)\|_{M^{1/2}H^{s,s}}\leq C(1+Q_s(t))e^{-\tau t}\|\overline{g}_0\|_{M^{1/2}H^{-s,-s}},
\end{align} 
where $\tau>0$, where $C=C(\gamma,\sigma,V,\tau,)$, and $Q_s(t)$ is a continuous positive function such that
$\lim_{t\rightarrow 0^+}{Q_s(t)}=+\infty$, $\lim_{t\rightarrow +\infty}{Q_s(t)}<+\infty$, and where $M^{1/2}H^{s,s}$ denotes the
weighted isotropic Sobolev Space of order $s$ with weight $M^{-1/2}$, as stated in Assumption~(NG). In addition, well-posedness
of~\eqref{eq:134} is proved in $M^{1/2}\mathcal{S}'(\mathbb{R}^{2d})$. The auxiliary initial condition $\overline{g}_0$ mentioned
in Assumption~(NG) may be used in~\eqref{eq:135} to deduce that
\begin{align}
  \label{eq:136}
  \|\overline{g}(s,\cdot,\cdot)\|_{M^{1/2}H^{5,5}}\leq C_{\overline{t}},\qquad \mbox{ for all } s\geq \overline{t}>0.
\end{align}

The well-posedness of~\eqref{eq:134} in $M^{1/2}\mathcal{S}'(\mathbb{R}^{2d})$, the choice of $\overline{g}_0$ made in
Assumption~(NG) and~\eqref{eq:136} imply the following bound for the solution to~\eqref{eq:118}
\begin{align}
  \label{eq:137}
  \|g(t,\cdot,\cdot)\|_{M^{1/2}H^{5,5}}=\|\overline{g}(\overline{t}+t,\cdot,\cdot)\|_{M^{1/2}H^{5,5}}\leq C_{\overline{t}},
  \qquad \mbox{ for all } t\geq 0,
\end{align}
We remind the reader that $g$ is the probability density function of a Langevin particle $(q_i(t),p_i(t))$
satisfying~\eqref{eq:24}.
\end{rem}


\begin{lemma}
  \label{lem:4}
  Let $g(t,q,p)$ be the solution to~\eqref{eq:118}, and let Assumption~(NG) be satisfied. For some $\alpha\in(1/4,1/2)$ and any
  $0\leq s<t\leq T$, we have
  \begin{align}
    \|g(t,\cdot,\cdot)-g(s,\cdot,\cdot)\|_{L^2(\mathbb{R}^2)} 
    & \leq C|t-s|,\label{eq:110}\\
    \|M^{-\alpha}\left(g(t,\cdot,\cdot)-g(s,\cdot,\cdot)\right)\|_{L^{\infty}(\mathbb{R}^2)} & \leq C|t-s|,\label{eq:111}\\
    \left\|M^{-\alpha}(\partial/\partial q)\left(g(t,\cdot,\cdot)-g(s,\cdot,\cdot)\right)\right\|_{L^{\infty}(\mathbb{R}^2)} 
    & \leq C|t-s|.\label{eq:116}
  \end{align}
\end{lemma}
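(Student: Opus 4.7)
The plan is to reduce each of the three bounds to a uniform-in-time estimate on $\partial_\tau g$ via the fundamental theorem of calculus,
\begin{align*}
g(t,\cdot,\cdot)-g(s,\cdot,\cdot)=\int_s^t \partial_\tau g(\tau,\cdot,\cdot)\,\m \tau,
\end{align*}
and then to use the Fokker--Planck equation~\eqref{eq:118} to rewrite
\begin{align*}
\partial_\tau g=-p\,\partial_q g+\gamma g+(\gamma p+V'(q))\,\partial_p g+\frac{\sigma^2}{2}\partial_p^2 g.
\end{align*}
Each summand is a polynomially growing coefficient (of order $O(1)$, $O(p)$ or $O(V'(q))\approx O(\langle q\rangle^{2n-1})$) times a spatial derivative of $g$ of order at most two. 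All three desired bounds will then follow from the uniform estimate $\|g(\tau,\cdot,\cdot)\|_{M^{1/2}H^{5,5}}\leq C$ supplied by~\eqref{eq:137}, after a suitable redistribution of the weight $M^{-1/2}$.

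For~\eqref{eq:110}, I would bound $\|g(t)-g(s)\|_{L^2}\leq|t-s|\sup_\tau\|\partial_\tau g(\tau)\|_{L^2}$ and handle each term of $\partial_\tau g$ by peeling off a factor of $M^{1/2}$; for instance,
\begin{align*}
\|p\,\partial_q g\|_{L^2}^2\leq \|p^2 M\|_{L^\infty(\mathbb{R}^2)}\,\|M^{-1/2}\partial_q g\|_{L^2}^2.
\end{align*}
The first factor is finite by the Gaussian decay of $M$ in $p$, while the second is controlled by $\|g\|_{M^{1/2}H^{5,5}}$ once $\partial_q$ is commuted through $M^{-1/2}$, the commutator producing a factor $V'(q)$ that is reabsorbed by the remaining half-power of $M$ thanks to Assumption~(NG)\ref{it:NG2}. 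The term involving $V'(q)\partial_p g$ is handled by the same trick using $\|(V')^2 M\|_{L^\infty}<\infty$, and the two remaining terms are immediate.

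For~\eqref{eq:111} and~\eqref{eq:116}, I would bound $\|M^{-\alpha}(g(t)-g(s))\|_{L^\infty}\leq|t-s|\sup_\tau\|M^{-\alpha}\partial_\tau g(\tau)\|_{L^\infty}$ (and likewise with an extra $\partial_q$) and invoke the Sobolev embedding $H^2(\mathbb{R}^2)\hookrightarrow L^\infty(\mathbb{R}^2)$, which is valid because the phase space $(q,p)\in\mathbb{R}^2$ is two-dimensional. Writing
\begin{align*}
M^{-\alpha}\partial_\tau g=M^{1/2-\alpha}\cdot(\textrm{polynomial in }p,V'(q))\cdot\bigl(M^{-1/2}\partial_\tau g\bigr),
\end{align*}
and observing that $M^{1/2-\alpha}$ together with all its derivatives (which introduce factors of $V^{(k)}$) remains bounded on $\mathbb{R}^2$ precisely because $\alpha<1/2$, one checks that computing up to two (respectively three) spatial derivatives of $M^{-\alpha}\partial_\tau g$ for~\eqref{eq:111} (respectively~\eqref{eq:116}) involves at most four (respectively five) derivatives of $g$, each weighted by a bounded factor. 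The uniform bound $\|M^{-1/2}g\|_{H^{5,5}}\leq C$ closes the estimate.

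The main obstacle is the combinatorial bookkeeping of the polynomial factors $p^j V^{(k)}(q)$ produced every time a derivative falls on $M^{\pm 1/2}$ or on $M^{1/2-\alpha}$. The fifth-order regularity demanded by~\eqref{eq:137} is exactly what is needed here: two derivatives for the Sobolev embedding, two more from the Fokker--Planck operator, and an extra one for~\eqref{eq:116} exhaust the available regularity. The constraint $\alpha<1/2$ is essential so that $M^{1/2-\alpha}$ decays and absorbs the polynomial growth of $V$ and its derivatives, whereas the lower bound $\alpha>1/4$ provides the additional slack needed when two powers of the weight multiply inside the $L^2$-norm hidden in the $H^2$ embedding.
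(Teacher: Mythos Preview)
Your proposal is correct and follows essentially the same route as the paper: the fundamental theorem of calculus plus the Fokker--Planck equation reduce everything to uniform-in-time control of polynomially-weighted spatial derivatives of $g$ of order at most two (three for~\eqref{eq:116}), which is then handled by splitting off a decaying power of $M$ to absorb the polynomial coefficients and invoking the bound~\eqref{eq:137}, with the Sobolev embedding $H^2(\mathbb{R}^2)\hookrightarrow L^\infty(\mathbb{R}^2)$ supplying the $L^\infty$ estimates~\eqref{eq:111} and~\eqref{eq:116}. The only difference is cosmetic: the paper writes the weight as $M^{1/2-\alpha}\cdot M^{-1/2+\alpha}$ rather than your $M^{1/2-\alpha}\cdot M^{-1/2}$, so that an extra factor $M^{\alpha}$ sits on the $g$-side and absorbs the commutator terms directly, without needing to appeal to polynomial-weight control inside the isotropic space $H^{5,5}$.
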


\begin{proof}
We write 
\begin{align}
  \label{eq:112}
  \|g(t,q,p)-g(s,q,p)\|^2_{L^2(\mathbb{R}^2)} 
  & \leq 2\left\|\int_{s}^{t}-\nabla\cdot(\mu g)
    \m z\right\|_{L^2(\mathbb{R}^2)}^2+2\left\|\int_{s}^{t}\frac{\sigma^2}{2}\frac{\partial^2}{\partial p^2}g\,
    \m z\right\|_{L^2(\mathbb{R}^2)}^2\nonumber\\
  & \leq 2|t-s|\int_{s}^{t}{\left\|\nabla\cdot(\mu)g+\mu\cdot\nabla g\right\|^2_{L^2(\mathbb{R}^2)}
    \m z}+2|t-s|\int_{s}^{t}{\left\|\frac{\sigma^2}{2}\frac{\partial^2}{\partial p^2}g\right\|^2_{L^2(\mathbb{R}^2)}\m z}\nonumber\\
  & \leq 2|t-s|\int_{s}^{t}{\left\|M^{1/2-\alpha}M^{-1/2+\alpha}\left(\nabla\cdot(\mu)g+\mu\cdot\nabla g\right)
    \right\|^2_{L^2(\mathbb{R}^2)}\m z}\nonumber\\
  & \quad +2|t-s|\int_{s}^{t}{\left\|M^{1/2-\alpha}M^{-1/2+\alpha}\frac{\sigma^2}{2}\frac{\partial^2}{\partial p^2}
    g\right\|^2_{L^2(\mathbb{R}^2)}\m z}.
\end{align}
Assumption~(NG) implies that $V$ has at most polynomial growth, while $M$ decays exponentially in $p,q$. This immediately implies
that $\|\nabla\cdot(\mu)M^{1/2-\alpha}\|_{L^{\infty}(\mathbb{R}^2)}<\infty$ and
$\||\mu|M^{1/2-\alpha}\|_{L^{\infty}(\mathbb{R}^2)}<\infty$. In addition, $M^{-1/2+\alpha}g$ is uniformly bounded in time in
$H^{2,2}(\mathbb{R}^2)$ thanks to~\eqref{eq:137}. This is enough to control the $L^2(\mathbb{R}^2)$-norm of the remaining terms
$M^{-1/2+\alpha}g$, $M^{-1/2+\alpha}\nabla g$, $M^{-1/2+\alpha}(\partial^2/\partial p^2)g$, and proceed in~\eqref{eq:112}
deduce~\eqref{eq:110}. As for~\eqref{eq:111}, we have
\begin{align}
  \label{eq:113}
  & \left\|M^{-\alpha}\!\left(g(t,q,p)-g(s,q,p)\right)\right\|_{L^{\infty}(\mathbb{R}^2)}  
    \leq \int_{s}^{t}{\!\left[\left\|M^{-\alpha}\nabla\cdot(\mu)g+M^{-\alpha}\mu\cdot
    \nabla g\right\|_{L^{\infty}(\mathbb{R}^2)}+\left\|M^{-\alpha}\frac{\sigma^2}{2}\frac{\partial^2}{\partial p^2}
    g\right\|_{L^{\infty}(\mathbb{R}^2)}\right]\m z}\nonumber\\
  & \leq \int_{s}^{t}{\left[\left\|M^{1/2-2\alpha}M^{-1/2+\alpha}\left(\nabla\cdot(\mu)g+\mu\cdot\nabla g\right)
    \right\|_{L^{\infty}(\mathbb{R}^2)}+\left\|M^{1/2-2\alpha}M^{-1/2+\alpha}\frac{\sigma^2}{2}\frac{\partial^2}{\partial p^2}
    g\right\|_{L^{\infty}(\mathbb{R}^2)}\right]\m z}.
\end{align}
The terms $\|\nabla\cdot(\mu)M^{1/2-2\alpha}\|_{L^{\infty}(\mathbb{R}^2)}$, $\||\mu|M^{1/2-2\alpha}\|_{L^{\infty}(\mathbb{R}^2)}$
are bounded. We then use~\eqref{eq:137} and the Sobolev embedding Theorem to deduce~\eqref{eq:111} from~\eqref{eq:113}. The proof
of~\eqref{eq:116} is analogous.
\end{proof}

\begin{prop}
  \label{p:3}
  Let $T>0$. Let Assumption~(NG) be satisfied. Let $(q,p)$ obey the Langevin dynamics~\eqref{eq:24}. Let $A(q,p):=p^{n_1}q^{n_2}$,
  for some $n_1,n_2\in\mathbb{N}$, and let $c\geq 2$. Then, for any $s,t\in[0,T]$, we have
  \begin{align}
    \int_{\mathbb{R}}{\left|\mean{w_\ep(x-q(t))A(q(t),p(t))-w_\ep(x-q(s))A(q(s),p(s))}\right|^c\emph{d} x} 
    & \leq C|t-s|^{1+\beta},\label{eq:104}\\   
    \int_{\mathbb{R}}{\left|\mean{w'_\ep(x-q(t))A(q(t),p(t))-w'_\ep(x-q(s))A(q(s),p(s))}\right|^c\emph{d} x} 
    & \leq C|t-s|^{1+\beta},\label{eq:105}
  \end{align}
  where $C$ is independent of $\ep>0$. We also have, for any $x\in\mathbb{R}$
  \begin{align}
    \left|\mean{w_\ep(x-q(t))A(q(t),p(t))-w_\ep(x-q(s))A(q(s),p(s))}\right| & \leq K|t-s|,\label{eq:108}\\
    \left|\mean{w'_\ep(x-q(t))A(q(t),p(t))-w'_\ep(x-q(s))A(q(s),p(s))}\right| & \leq K|t-s|,\label{eq:117}
  \end{align}
where $K$ is independent of $\ep>0$ and $x\in\mathbb{R}$.
\end{prop}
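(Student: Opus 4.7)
\emph{Proof plan.} The core idea is to view every expectation in the statement as an integral against the Fokker--Planck density $g(t,q,p)$ of $(q(t),p(t))$, which under Assumption~(NG)~\ref{it:NG3} is a time-shift of the solution $\overline{g}$ to~\eqref{eq:134} and therefore obeys the time-regularity bounds of Lemma~\ref{lem:4}. Writing
\[
D_\ep(t,s,x) := \int_{\mathbb{R}^2} w_\ep(x-q)\,A(q,p)\,\bigl[g(t,q,p)-g(s,q,p)\bigr]\,\m q\,\m p,
\]
and the analogous $D'_\ep$ with $w'_\ep$ in place of $w_\ep$, the four estimates~\eqref{eq:104}--\eqref{eq:117} reduce to controlling $\sup_{x}|D_\ep|$, $\sup_{x}|D'_\ep|$ and the $L^c_x$-norms of $D_\ep$, $D'_\ep$.

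For the pointwise bounds~\eqref{eq:108} and~\eqref{eq:117}, I would factor $M^\alpha$ out of $g(t)-g(s)$ via estimate~\eqref{eq:111} of Lemma~\ref{lem:4}, so that
\[
|D_\ep(t,s,x)| \leq C|t-s|\int_{\mathbb{R}^2} w_\ep(x-q)\,|A(q,p)|\,M^\alpha(q,p)\,\m q\,\m p.
\]
The Gibbs weight splits as $M(q,p) = M_p(p)M_q(q)$, and Assumption~(NG)~\ref{it:NG2} forces $V(q) \geq C^{-1}\langle q\rangle^{2n}-C$ with $n\geq 1$, so both $\int|p|^{n_1}M_p^\alpha(p)\,\m p$ and $\sup_q |q|^{n_2}M_q^\alpha(q)$ are finite. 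Using $\int w_\ep(x-q)\,\m q = 1$, this bounds $|D_\ep|$ by a constant $K$ independent of $x$ and $\ep$, giving~\eqref{eq:108}. To handle $w'_\ep$ I would integrate by parts in $q$, using $w'_\ep(x-q)=-\partial_q w_\ep(x-q)$; the boundary terms vanish because $g(t,\cdot,\cdot)\in M^{1/2}H^{5,5}(\mathbb{R}^2)$ by~\eqref{eq:137} forces super-polynomial decay of $Ag$. Leibniz then produces one term treated exactly as above with $\partial_q A = n_2 q^{n_2-1}p^{n_1}$, and a second term controlled by~\eqref{eq:116} of Lemma~\ref{lem:4}.

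For the $L^c$-in-$x$ estimates~\eqref{eq:104} and~\eqref{eq:105}, the same factorisation yields the sharper bound
\[
|D_\ep(t,s,x)| \leq C|t-s|\,(w_\ep \ast h)(x),\qquad h(q):=\Bigl(\int|p|^{n_1}M_p^\alpha(p)\,\m p\Bigr)|q|^{n_2}M_q^\alpha(q),
\]
with $h$ independent of $x$ and $\ep$. Since $M_q^\alpha$ decays faster than any polynomial, $h\in L^c(\mathbb{R})$ for every $c\geq 1$, and Young's convolution inequality delivers $\|w_\ep\ast h\|_{L^c}\leq \|w_\ep\|_{L^1}\|h\|_{L^c}=\|h\|_{L^c}$, uniformly in $\ep$. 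Hence $\|D_\ep(t,s,\cdot)\|_{L^c}^c\leq C|t-s|^c$, which on $[0,T]$ implies~\eqref{eq:104} with any $\beta\in(0,c-1]$; the corresponding bound~\eqref{eq:105} follows from the same convolution estimate applied after the integration by parts used for~\eqref{eq:117}. The main obstacle is not conceptual but organisational: checking that each integration by parts is admissible and that all weighted integrals in $A$, $\partial_q A$, $M^\alpha$ converge uniformly in $(x,\ep)$, which is secured by Assumption~(NG)~\ref{it:NG2} together with $\alpha\in(1/4,1/2)$ as in Lemma~\ref{lem:4}.
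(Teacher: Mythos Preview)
Your argument is correct, and for the pointwise estimates~\eqref{eq:108} and~\eqref{eq:117} it is essentially identical to the paper's: factor out $M^{\alpha}$ via~\eqref{eq:111} (respectively~\eqref{eq:116} after integrating by parts in $q$), then use $\|w_\ep\|_{L^1}=1$ and the polynomial-times-Gibbs integrability.

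For the integrated estimates~\eqref{eq:104} and~\eqref{eq:105} you take a genuinely different and more direct route than the paper. The paper writes the integrand as $w_\ep\ast(\tilde g(\cdot,t)-\tilde g(\cdot,s))$, applies Young's inequality, and then controls $\|\tilde g(\cdot,t)-\tilde g(\cdot,s)\|_{L^c}$ by a H\"older interpolation: it splits $|g(t)-g(s)|=|g(t)-g(s)|^{\theta}|g(t)-g(s)|^{1-\theta}$ with $\theta\in(1/c,2/c)$, uses the $L^\infty$ bound~\eqref{eq:111} on one factor and the $L^2$ bound~\eqref{eq:110} on the other, and ends up with $C|t-s|^{c\theta}$, so $1+\beta=c\theta\in(1,2)$. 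You instead use only the weighted $L^\infty$ bound~\eqref{eq:111} to obtain the pointwise convolution estimate $|D_\ep(t,s,x)|\le C|t-s|\,(w_\ep\ast h)(x)$ with a fixed $h\in L^c(\mathbb{R})$, and then a single application of Young's inequality gives $\|D_\ep\|_{L^c}^c\le C|t-s|^c$. Your argument is shorter, avoids~\eqref{eq:110} entirely, and yields the stronger exponent $1+\beta=c\ge 2$. The paper's interpolation mirrors the structure of Lemma~\ref{lem:1} from the Gaussian case and would be the natural choice if only an $L^2$-type time regularity were available; under Assumption~(NG), where~\eqref{eq:111} already gives full Lipschitz control in $L^\infty$, your simplification is legitimate and preferable.
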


\begin{proof}
We rewrite the left-hand-side of~\eqref{eq:104} as
\begin{align*}
  & \int_{\mathbb{R}}{\left|\mean{w_\ep(x-q(t))A(q(t),p(t))-w_\ep(x-q(s))A(q(s),p(s))}\right|^c\m x}\\
  & = \int_{\mathbb{R}}{\left|\int_{\mathbb{R}}{\int_{\mathbb{R}}{w_{\ep}(x-q)A(q,p)(g(t,p,q)-g(s,p,q))\m p\m q}}\right|^c\m x} = \|w_{\ep}
    \ast \left(\tilde{g}(\cdot,t)-\tilde{g}(\cdot,s)\right)\|^{c}_{c},
\end{align*}
where $\tilde{g}(q,t):=\int_{\mathbb{R}}{A(q,p)g(t,q,p)\m p}$. Let us define
$h_{s,t}(q,p):=\left|\left(g(t,q,p)-g(s,q,p)\right)\right|$. We proceed as
\begin{align*}
  \|w_{\ep}\ast \left(\tilde{g}(\cdot,t)-\tilde{g}(\cdot,s)\right)\|^{c}_{c} 
  & \leq \|w_{\ep}\|^c_1\|\tilde{g}(\cdot,t)-\tilde{g}(\cdot,s)\|^c_c 
    \leq \int_{\mathbb{R}}{\left|\int_{\mathbb{R}}{\left|A(q,p)\right|h_{s,t}(q,p)\m p}\right|^c\m q}.
\end{align*}
Fix $\theta\in(1/c,2/c)\subset (0,1)$. We split $h_{s,t}(q,p)=h^{\theta}_{s,t}(q,p)h^{1-\theta}_{s,t}(q,p)$. We apply the H\"older
inequality for this splitting in the above inner $p$-spatial integral, and we get
\begin{align}
  \label{eq:106}
  \int_{\mathbb{R}}{\left|\int_{\mathbb{R}}{A(q,p)h_{s,t}(q,p)\m p}\right|^c\m q}
  \leq \int_{\mathbb{R}}{\left(\int_{\mathbb{R}}{h_{s,t}(q,p)^2\m p}\right)^{\theta c/2}
  \left(\int_{\mathbb{R}}{|A(p,q)|^{\theta'}h_{s,t}(q,p)^{\theta''}\m p}\right)^{c/\theta'}\m q},
\end{align}
where $\theta'':=(1-\theta)\theta'>0$, and $\theta'$ is conjugate to $2/\theta$. Let $\alpha\in(1/4,1/2)$. We use~\eqref{eq:111}
to deduce that
\begin{align*}
  \int_{\mathbb{R}}{|A(p,q)|^{\theta'}h_{s,t}(q,p)^{\theta''}\m p} 
  & = \int_{\mathbb{R}}{|A(p,q)|^{\theta'}M^{\alpha\theta''}M^{-\alpha\theta''}h_{s,t}(q,p)^{\theta''}\m p}\\
  & \leq K\int_{\mathbb{R}}{|A(p,q)|^{\theta'}M^{\alpha\theta''}\m p}\leq K |q|^{n_2\theta'}\exp\{-CV(q)\},
\end{align*}
for some $C=C(n_1,\theta,\theta',\gamma,\sigma,\alpha)>0$. We apply the H\"older inequality (in the $q$ variable)
in~\eqref{eq:106} to deduce
\begin{align*}
  \int_{\mathbb{R}}{\left|\mean{w_\ep(x-q(t))A(q(t),p(t))-w_\ep(x-q(s))A(q(s),p(s))}\right|^c\m x}
  \leq C\|h_{s,t}\|_{L^2(\mathbb{R}^2)}^{c\theta}\leq C|t-s|^{1+\beta},
\end{align*}
where we have used Lemma~\ref{lem:4}, estimate~\eqref{eq:110}, in the last inequality. We thus proved~\eqref{eq:104}. The proof
of~\eqref{eq:105} is similar. We can rewrite the left-hand-side of~\eqref{eq:105} as
\begin{align}
  \label{eq:115}
  & \int_{\mathbb{R}}{\left|\int_{\mathbb{R}}{\int_{\mathbb{R}}{w'_{\ep}(x-q)A(q,p)(g(t,p,q)-g(s,p,q))\m p\m q}}\right|^c\m x}\nonumber\\
  &\quad = \int_{\mathbb{R}}{\left|\int_{\mathbb{R}}{\int_{\mathbb{R}}{w_{\ep}(x-q)\frac{\partial}
    {\partial q}\left\{A(q,p)(g(t,p,q)-g(s,p,q))\right\}\m p\m q}}\right|^c\m x},
\end{align}
where we have also used integration by parts in the $q$ variable, and the fact that the integrands decay to $0$ for
$q\rightarrow\pm\infty$, by~\cite[Theorem 0.1]{Herau2004a}. From~\eqref{eq:115} onwards, the computations carried out
for~\eqref{eq:104} can now be adapted line by line with $\partial/\partial q\left\{A(q,p)g(t,q,p)\right\}$ replacing
$A(q,p)g(t,q,p)$. This is possible because the $q$-derivative introduces a polynomial-type correction to $A(q,p)g(t,q,p)$, which
can dealt with as above, using again the exponential decay of $M$.

We turn to~\eqref{eq:108}. We rely on~\eqref{eq:111}, and compute 
\begin{align*}
  & \left|\mean{w_\ep(x-q(t))A(q(t),p(t))-w_\ep(x-q(s))A(q(s),p(s))}\right| \\
  & \quad \leq \int_{\mathbb{R}}{\int_{\mathbb{R}}{\left|w_{\ep}(x-q)A(q,p)(g(t,q,p)-g(s,q,p))\right|\m q\m p}}
    \leq C|t-s|\int_{\mathbb{R}}{\int_{\mathbb{R}}{\left|w_{\ep}(x-q)A(q,p)M^{\alpha}\right|\m q\m p}}\\
  & \quad \leq C|t-s|\int_{\mathbb{R}}{\|w_{\ep}(x-\cdot)\|_{L^1}|p|^{n_1}\exp\{-C(\alpha,\gamma,\sigma) p^2/2\}\m p}=K|t-s|,
\end{align*}
which is the desired estimate. The proof of~\eqref{eq:117} is completely analogous, and it relies on integration by parts for
$w_{\ep}'$ and estimate~\eqref{eq:116}.
\end{proof}

\begin{rem}
  \label{rem:200}
  With the notation and assumptions of Proposition~\ref{p:3}, it is not difficult to adapt the proof of the same proposition to
  show that $\int_{\mathbb{R}}{\left|\mean{w_\ep(x-q(0))A(q(0),p(0))}\right|^c\m x}$,
  $\int_{\mathbb{R}}{\left|\mean{w'_\ep(x-q(0))A(q(0),p(0))}\right|^c\m x}$,
  $\int_{\mathbb{R}}{\left|\mean{w''_\ep(x-q(0))A(q(0),p(0))}\right|^c\m x}$ are uniformly bounded in $\ep$.
\end{rem}

\subsection{Estimate on negative powers of the density $\rho_{\ep}$}
\label{ss:2}

\begin{prop}
  \label{p:2}
  Assume the validity of either Assumption~(G) or Assumption~(NG). Let $N\ep^{\theta}=1$, for some $\theta> 3$, and let
  $\rho_{\ep}$ be as in~\eqref{eq:102}.  Let $D\subset\mathbb{R}$ be a bounded set, and let $T>0$ be fixed. As
  $N\rightarrow\infty$ and $\ep\rightarrow0$, we have
  \begin{align}
    \label{eq:52}
    \mean{\rho_{\ep}^{-2}(x,t)}\leq C(D,T),\qquad\mbox{ for all } x\in D,\mbox{ for all } t\in[0,T],
  \end{align}
where $C$ is independent of $N,\ep$.
\end{prop}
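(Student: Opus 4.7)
The plan is to exploit concentration of $\rho_\epsilon(x,t)$ around its mean $\mu_\epsilon(x,t):=\mathbb{E}[\rho_\epsilon(x,t)]$, together with a layer-cake representation, to absorb the singularity of $\rho_\epsilon^{-2}$ on the rare event where $\rho_\epsilon$ is small. First I would establish the uniform lower bound $\mu_\epsilon(x,t)\geq\mu(D,T)>0$ valid for all $(x,t)\in D\times[0,T]$ and $\epsilon$ small: under Assumption~(G), Lemma~\ref{lem:2} yields $\mu_\epsilon=\mathcal{G}(x,\mu_q(t),\sigma_q^2(t)+\epsilon^2)$, which is strictly positive on the compact set $D\times[0,T]$ thanks to $\sigma_q^2\in[\nu,\iota]$ and the Lipschitz regularity of $\mu_q$; under Assumption~(NG), the uniform bound follows from~\eqref{eq:137}, which supplies continuity and positivity of the marginal density of $q(t)$ uniformly in~$t$.

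Next I would use the layer-cake identity
\begin{equation*}
\mathbb{E}[\rho_\epsilon^{-2}]\;\leq\;\frac{4}{\mu^2}+2\int_0^{\mu/2}s^{-3}\,\mathbb{P}(\rho_\epsilon(x,t)<s)\,\m s,
\end{equation*}
and split the remaining integral at a threshold $s_\ast:=(N\epsilon)^{-1}e^{-A/\epsilon^2}$, for some $A>0$ to be chosen. On the middle range $s\in(s_\ast,\mu/2)$, Bernstein's inequality applied to the iid nonnegative sum $\rho_\epsilon=N^{-1}\sum_{i=1}^{N} w_\epsilon(x-q_i(t))$---using $\|w_\epsilon\|_\infty\leq C\epsilon^{-1}$ and $\mathbb{E}[w_\epsilon(x-q_1)^2]\leq C\epsilon^{-1}$ (Lemma~\ref{la:4})---yields $\mathbb{P}(\rho_\epsilon<s)\leq\exp(-cN\epsilon(\mu-s)^2)\leq e^{-c'\epsilon^{1-\theta}}$ uniformly in $s$. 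Combined with $\int_{s_\ast}^{\mu/2}s^{-3}\m s\leq (2s_\ast^2)^{-1}=\mathcal{O}(\epsilon^{2-2\theta}e^{2A/\epsilon^2})$, this gives a middle-range contribution of order $\exp(-c'\epsilon^{1-\theta}+2A/\epsilon^2+\mathcal{O}(\log\epsilon))$. On the small range $s\in(0,s_\ast)$, nonnegativity of the summands gives $\{\rho_\epsilon<s\}\subset\{w_\epsilon(x-q_i(t))<Ns\ \forall i\}$, so by independence $\mathbb{P}(\rho_\epsilon<s)\leq F_{Y_1}(Ns)^N$ where $F_{Y_1}$ is the CDF of $Y_1=w_\epsilon(x-q_1(t))$. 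Using $F_{Y_1}(Ns)=\mathbb{P}(|x-q_1|\geq\epsilon\sqrt{-2\log(Ns\sqrt{2\pi}\epsilon)})$ together with the Gaussian tail of $q_1$ (Assumption~(G)) or the $L^\infty$ bound on $f_{q(t)}$ from~\eqref{eq:137} (Assumption~(NG)) yields $F_{Y_1}(Ns)^N\leq(Ns\sqrt{2\pi}\epsilon)^{cN\epsilon^2}$; the substitution $u=-\log(Ns\sqrt{2\pi}\epsilon)$ then turns the tail integral into $(N\epsilon)^2\int_{A/\epsilon^2}^\infty e^{u(2-cN\epsilon^2)}\m u$, which converges and decays in $\epsilon$ once $cN\epsilon^2>2$.

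The main obstacle is the balance in the middle-range exponent: the Bernstein gain $-c'\epsilon^{1-\theta}$ must dominate the threshold penalty $+2A/\epsilon^2$. Since $\epsilon^{1-\theta}$ overpowers $\epsilon^{-2}$ precisely when $\theta>3$, this is the sharp scaling forcing the hypothesis of the proposition; choosing $A$ large enough so that the tail contribution also closes then yields $\mathbb{E}[\rho_\epsilon^{-2}]\leq C(D,T)$ uniformly as $\epsilon\to0$ and $N\to\infty$. Under Assumption~(NG) the Gaussian-specific computations are replaced by the Fokker--Planck density estimates of Lemma~\ref{lem:4} and~\eqref{eq:137}; otherwise the structure of the argument is unchanged.
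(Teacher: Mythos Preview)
Your argument is correct and takes a genuinely different route from the paper. The paper does not use concentration inequalities at all: it conditions on $n(x,t):=\#\{i:q_i(t)\in(x-\ep,x+\ep)\}$, which is $\mathrm{Bi}(N,p_{x,t,\ep})$ with $p_{x,t,\ep}\geq\kappa\ep$, and splits $\mean{\rho_\ep^{-2}}$ over $\{n=0\}$ and $\{n\geq 1\}$. On $\{n\geq 1\}$ it uses the deterministic bound $\rho_\ep\geq n\ep^{\theta-1}$ and then invokes Chao--Strawderman to control $\mean{(n+2)^{-2}}$. On $\{n=0\}$ it computes the exact density of the distance to the nearest particle (an order statistic) and couples the resulting integral with the exponentially small probability $\mathbb{P}(n=0)\leq\exp\{-(\kappa/2)\ep^{-(\theta-1)}\}$; this is where $\theta>3$ enters for the paper, since the nearest-particle integral grows like $\exp\{K\ep^{-2}\}$.

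Your Bernstein-plus-layer-cake approach is more streamlined and avoids both the order-statistic computation and the Chao--Strawderman lemma; the threshold $\theta>3$ emerges for the same structural reason (an $\ep^{-2}$ loss competing against an $\ep^{1-\theta}$ gain), but via the choice of $s_\ast$ rather than the nearest-particle estimate. One point to tighten: your tail bound $F_{Y_1}(Ns)^N\leq(Ns\sqrt{2\pi}\ep)^{cN\ep^2}$ amounts to $\mathbb{P}(|x-q_1|>r)\leq e^{-cr^2}$ for all $r\geq\sqrt{2A}$, which under Assumption~(G) genuinely requires $A$ large (so that the Gaussian tail has kicked in without a prefactor $>1$); under Assumption~(NG) you need the exponential decay of $f_{q(t)}$ supplied by~\eqref{eq:122} rather than merely the $L^\infty$ bound from~\eqref{eq:137}. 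With that adjustment your proof goes through.
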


\begin{proof}[Proof of Proposition \ref{p:2} under Assumption~(G)] 
We know that 
\begin{equation*}
  q_i(t)\sim\mathcal{N}(\mu_q(t),\sigma_q^2(t)),\qquad t\in[0,T]. 
\end{equation*}
Also, $\mu_q(t)$ is bounded on $[0,T]$. 
We can think of the quantity $x-q_{i}(t)$ as being $(x-\mu_q(t))-(\mu_q(t)-q_i(t))$. This observation, together with the
distributional symmetry of Gaussian random variables with mean zero, allows us to prove the statement by considering the simpler
setting
\begin{align*}
  q_i(t) & \sim\mathcal{N}(0,\sigma^2_q(t)),\qquad\mbox{ for all } t\in[0,T],\\
  0\leq x & \leq\max_{y\in D}{|y|}+\max_{s\in[0,T]}|\mu_q(s)|=:M,
\end{align*}
without loss of generality. Notice that we have performed an abuse of notation with respect to $q_i$. We fix $t\in[0,T]$, and $x$
satisfying the above condition. With our scaling choice $N=\ep^{-\theta}$, we have
\begin{equation*}
  \rho_\epsilon(x,t) = C\epsilon^{\theta-1}\sum_{i=1}^{N}{\exp(-(q_i(t)-x)^2/2\epsilon^2)}.
\end{equation*}
For $\ep\leq1$, there exists $\kappa=\kappa(D,T)$ such that  
\begin{align}
  \label{eq:124}
  \kappa\cdot\ep\leq\underbrace{\mathbb{P}\left(q_i(t)\in(x-\epsilon,x+\epsilon)\right)}_{=:p_{x,t,\ep}},
  \qquad\mbox{ for all } t\in[0,T],\,\mbox{ for all } x\in[0,M].
\end{align}
A simple choice is $\kappa:=(2/(2\pi\iota))\exp\{-(M+1)^2/2\nu\}$, where we have used Assumption~(G).

The $N$ particles being independent, we have
\begin{equation*}
  n(x,t):=\#\{\mbox{particles in }(x-\epsilon,x+\epsilon)\mbox{ at time }t\}
  \sim \mbox{ Bi}(N,p_{x,t,\ep})=\mbox{Bi}(\epsilon^{-\theta},p_{x,t,\ep}).
\end{equation*}
We fix a positive real number $\eta$. It then follows that, on the set $\{n(x,t)\geq 1\}$, we have
\begin{equation*}
  \frac{1}{\rho_{\epsilon}^{\eta}(x,t)}\leq \frac{1}{(n(x,t)\epsilon^{\theta-1})^{\eta}}.
\end{equation*}

\emph{Estimate on the set $\{n=0\}$.} We now focus on the set $\{n(x,t)=0\}$. First of all, we notice that this event is
asymptotically highly unlikely. More precisely, using the independence of particles, we get
\begin{align}
  \label{eq:22}
  \mathbb{P}\left(n(x,t)=0\right) 
  & =  \mathbb{P}\left(\mbox{all particles in }(x-\epsilon,x+\epsilon)^C\mbox{ at time }t\right)=(1-p_{x,\ep,t})^{N}\nonumber\\
  & = (1-p_{x,\ep,t})^{\ep^{-\theta}} \leq (1-\kappa\ep)^{\ep^{-\theta}} \leq 
    \exp\left\{-\ep^{-(\theta-1)}\frac{\kappa}{2}\right\}.
\end{align}

Now that we have the asymptotic probability of finding no particles in $(x-\ep,x+\ep)$, we rely on the trivial bound
$\rho_{\ep}(x,t)\geq w_{\ep}(x-\tilde{q}(t))N^{-1}$, where $\tilde{q}(t)$ is the closest particle to $x$ at time $t$. In symbols,
$\tilde{q}(t):=q_{a}(t)$, where $a:=\arg\min_{i=1,\ldots,N}{|q_i(t)-x|}$. We compute the probability density function for
$|\tilde{q}(t)-x|$. For this purpose, we compute, for every $y\geq0$,
\begin{align*}
  \mathbb{P}\left(|x-\tilde{q}(t)|\leq y\right) 
  & =  1-\mathbb{P}\left(|\tilde{q}(t)-x|> y\right)=1-\mathbb{P}
    \left(\mbox{all particles in }(x-y;x+y)^C\mbox{ at time }t\right)\\
  & =  1-\mathbb{P}\left(q_1\mbox{ in }(x-y;x+y)^C\mbox{ at time }t\right)^N=1-(\Phi_t(x-y)+1-\Phi_t(x+y))^N,
\end{align*}
where we have set $\Phi_t(z):=\int_{-\infty}^{z}{\mathcal{G}(y,0,\sigma_q^2(t))\m y}$. In the rest of this proof only, we will
shorten $\mathcal{G}(y,0,\sigma_q^2(t))$ to simply $G_t(y)$. If we differentiate with respect to $y$, we get the probability
density function for $|\tilde{q}(t)-x|$
\begin{equation*}
  f_{|\tilde{q}(t)-x|}(y)=\mathbf{1}_{y\geq 0}\cdot N(\underbrace{\Phi_t(x-y)+1-\Phi_t(x+y)}_{=:Z_{x,t}(y)})^{N-1}(G_t(x-y)+G_t(x+y)).
\end{equation*}
We now rely on the inequality
\begin{equation*}
  \mean{\frac{1}{\rho^{\eta}_{\ep}(x,t)}}\leq \mean{\frac{N^{\eta}}{w^{\eta}_{\ep}(\tilde{q}(t)-x)}}.
\end{equation*}
We write the expectation on the right-hand-side using the probability density function for $|\tilde{q}-x|$.
\begin{equation}
  \label{eq:62}
  \mean{\frac{N^{\eta}}{w^{\eta}_{\ep}(\tilde{q}(t)-x)}}  =  N^{\eta}\int_{0}^{+\infty}{N(\Phi_t(x-y)+1-\Phi_t(x+y))^{N-1}(G_t(x-y)+G_t(x+y))\frac{1}{w^{2}_{\ep}(y)}\m y}.
\end{equation}

Before we deal with~\eqref{eq:62}, we need to estimate $Z_{x,t}(y)$, at least for large values of $y$. It is immediate to see that
$Z_{x,t}(y)\leq Z_{M,t}(y),\mbox{ for all } y\geq 0$. We compute the derivative
\begin{align*}
  \frac{\m}{\m\alpha}\mathcal{G}(z,0,\alpha)=C\exp\left\{-z^2/(2\alpha)\right\}\alpha^{-3/2}\left(z^2\alpha^{-1}-1\right).
\end{align*}
Thanks to Assumption~(G), this entails that 
\begin{align}
  Z_{M,t}(y)\leq Z_{M,\overline{t}}(y),\qquad \mbox{for }y\geq M+\sqrt{\iota},
\end{align}
where we have set $\overline{t}:=\arg\max_{s\in[0,T]}{\sigma^2_q(s)}$.  We now examine the ratio
$Z_{M,\overline{t}}(y)/G_{\overline{t}}(y-M)$. We use the de L'Hopital's rule and compute
\begin{align*}
  \lim_{y\rightarrow +\infty}{\frac{Z_{M,\overline{t}}(y)}{G_{\overline{t}}(y-M)}} 
  & =  \lim_{y\rightarrow +\infty}{\frac{Z'_{M,\overline{t}}(y)}{G_{\overline{t}}'(y-M)}}=\lim_{y\rightarrow 
    +\infty}{\frac{-G_{\overline{t}}(M-y)-G_{\overline{t}}(M+y)}{\frac{M-y}{\sigma_q^2(\overline{t})}\,G_{\overline{t}}(y-M)}} \\
  & = \lim_{y\rightarrow 
    +\infty}{\left\{\frac{\sigma_q^2(\overline{t})}{y-M}
    +\frac{\sigma_q^2(\overline{t})}{y-M}\exp\left(-\frac{4My}{2\sigma_q^2(t)}\right)\right\}}=0.
\end{align*}
This implies the existence of $\overline{y}=\overline{y}(D,T)>M+\sqrt{\iota}$ such that
\begin{align}
  \label{eq:121}
  Z_{x,t}(y)\leq Z_{M,\overline{t}}(y)\leq
  \begin{cases}
    1 & \mbox{ if } y \leq \overline{y}, \\
    \exp\left(-\frac{(y-M)^2}{2\iota}\right) & \mbox{ if } y\geq\overline{y}.
  \end{cases}
\end{align}

We are now able to compute~\eqref{eq:62} by splitting the integration on the two regions $[0,\overline{y}]$ and
$[\overline{y},+\infty)$ provided by~\eqref{eq:121}. We obtain
\begin{align*}
  \mean{\frac{N^{\eta}}{w^{\eta}_{\ep}(\tilde{q}(t)-x)}} 
  & = N^{\eta}\int_{0}^{\overline{y}}{N(\Phi_t(x-y)+1-\Phi_t(x+y))^{N-1}(G_t(x-y)+G_t(x+y))\frac{1}{w^{\eta}_{\ep}(y)}\m y}\\
  & \quad+  N^{\eta}\int_{\overline{y}}^{+\infty}{N(\Phi_t(x-y)+1-\Phi_t(x+y))^{N-1}(G_t(x-y)+G_t(x+y))\frac{1}{w^{\eta}_{\ep}(y)}\m y}\\
  & \leq  CN^{\eta}\underbrace{\int_{0}^{\overline{y}}{\frac{N}{w^{\eta}_{\ep}(y)}\m y}}_{=:T_1}\\
  & \quad 
    +  N^{\eta}\underbrace{\int_{\overline{y}}^{+\infty}{N\exp\left(-\frac{(y-M)^2(N-1)}{2\iota}\right)
    (G_t(x-y)+G_t(x+y))\frac{1}{w^{\eta}_{\ep}(y)}\m y}}_{=:T_2}.
\end{align*}

Integral $T_1$ can be bounded as
\begin{align*}
  \int_{0}^{\overline{y}}{\frac{N}{w^{\eta}_{\ep}(y)}\m y} 
  & =  CN\int_{0}^{\overline{y}}{\ep^{\eta}\exp\left(\frac{\eta y^2}{2\ep^2}\right)\m y}
    =  C\ep^{\eta+1} N\int_{0}^{(\overline{y})/\ep}{e^{z^2}\m z} \leq K_1(D,T,\eta)N\ep^{\eta}\exp\{K_2(D,T)\ep^{-2}\}.
\end{align*}

As for integral $T_2$, we notice that the scaling $N\ep^{\theta}=1$ and the condition $\overline{y}>M+\sqrt{\iota}$ provide the
bound
\begin{align*}
  \frac{\eta y^2}{2\ep^2}-\frac{(y-M)^2(N-1)}{2\iota}\leq-\frac{(y-M)^2}{4\iota/N}, 
  \qquad\mbox{ for }N\geq\overline{N}=\overline{N}(D,T).
\end{align*}
We can then estimate $I_2$ for $N\geq\overline{N}$, thus obtaining
\begin{align*}
  I_2 & \leq CN\ep^{\eta}\int_{0}^{+\infty}{\exp\left\{\frac{\eta y^2}{2\ep^2}-\frac{(y-M)^2(N-1)}{2\iota}\right\}\m y} 
         \leq CN\ep^{\eta}\int_{0}^{+\infty}{\exp\left\{-\frac{(y-M)^2}{4\iota/N}\right\}\m y}\leq CN^{1/2}{\ep}^{\eta}.
\end{align*}

We combine the contributions of $T_1$ and $T_2$ and deduce
\begin{equation}
  \label{eq:23}
  \mean{\frac{N^{\eta}}{w^{\eta}_{\ep}(\tilde{q}(t)-x)}}\leq K_1(D,T)N^{\eta}\ep^{\eta}\left\{N^{1/2}+N\exp(K_2(D,T)\ep^{-2})\right\}.
\end{equation}
We set $\eta=4$ and we deduce that
\begin{align*}
  \mean{\rho^{-2}_{\ep}(x,t)\cdot\mathbf{1}_{\{n(x,t)=0\}}} 
  & \leq \mean{\rho^{-4}_{\ep}(x)}^{1/2}\mathbb{P}\left(n(x,t)=0\right)^{1/2}\\
  & \leq K_1(D,T)N^{2}\ep^2\left\{N^{1/2}+N\exp(K_2(D,T)\ep^{-2})\right\}^{1/2}
    \exp\left\{-\ep^{-(\theta-1)}\frac{\kappa}{4}\right\}\rightarrow 0,
\end{align*} 
as $N \to \infty$ and $\ep \to 0$. The scaling $N\ep^{\theta}=1$, with $\theta>3$, is used to show the convergence to 0 of the
above estimate. We have dealt with the expectation of $\rho^{-2}_{\ep}(x,t)$ on the set $\{n(x,t)=0\}$, uniformly over $x\in D$
and $t\in[0,T]$.

\emph{Estimate on the set $\{n\geq 1\}$.} We now turn to the set $\{n(x,t)\geq 1\}$, and more precisely to estimating
$\mean{\rho^{-2}_{\ep}(x,t)\cdot\mathbf{1}_{\{n(x,t)\geq 1\}}}$. We have already noticed that on $\{n(x,t)\geq 1\}$ we have the
bound
\begin{equation*}
  \frac{1}{\rho_{\epsilon}^{2}(x,t)}\leq \frac{1}{(n(x,t)\epsilon^{\theta-1})^{2}}.
\end{equation*}
We use some tools from~\cite{Chao1972a}. In particular, we estimate $\mean{n(x,t)^{-2}}$ using~\cite[Corollary of Section 2, and
Section 3]{Chao1972a}. We have $\mean{(n(x,t)+2)^{-2}}=\int_{0}^{1}{g_2(z)\m z}$, where for $z\in[0,1]$
\begin{equation*}
  g_2(z):=z^{-1}\int_{0}^{z}{g_1(u)\m u},\qquad g_1(z):=t(q+pz)^N,
\end{equation*}
and where we have abbreviated $p:=p_{x,t,\ep}$, $q:=1-p_{x,t,\ep}$. We bound $g_2$ as
\begin{align*}
  g_2(z) & =  z^{-1}\int_{0}^{z}{u(q+pu)^N\m u}
           \leq \int_{0}^{z}{(q+pu)^N\m u}=p^{-1}\int_{0}^{z}{\frac{\m}{\m u}\left\{\frac{(q+pu)^{N+1}}{N+1}\right\}\m u}
           =  \frac{(q+pz)^{N+1}-q^{N+1}}{p(N+1)}.
\end{align*}
We use the scaling $N=\ep^{-\theta}$ and proceed as
\begin{align*}
  \mean{(n(x,t)+2)^{-2}}=\int_{0}^{1}{g_2(u)\m u} 
  & \leq \int_{0}^{1}{\frac{1}{p(N+1)}\left\{(q+pu)^{N+1}-q^{N+1}\right\}\m u}\\
  & \leq \frac{q^{N+1}}{p(N+1)}+\frac{1}{p^2(N+1)(N+2)} 
    \leq \frac{\ep^{\theta-1}}{\kappa}\exp\left\{-\ep^{-(\theta-1)}\frac{\kappa}{2}\right\}+\frac{\ep^{2\theta-2}}{\kappa^2}.
\end{align*}
As a result we obtain 
\begin{align*}
  \mean{\rho^{-2}_{\ep}(x,t)\cdot\mathbf{1}_{\{n(x,t)\geq 1\}}} 
  & \leq  \mean{\frac{1}{(n(x,t)\epsilon^{\theta-1})^{2}}\cdot\mathbf{1}_{\{n(x,t)\geq 1\}}}\leq
    \frac{3^2}{\ep^{2\theta-2}}\mean{\frac{1}{(n(x,t)+2)^2}\cdot\mathbf{1}_{\{n(x,t)\geq 1\}}}\\
  & \leq \frac{3^2}{\ep^{2\theta-2}}\mean{\frac{1}{(n(x,t)+2)^2}}
    \leq 3^2\left[\frac{\ep^{1-\theta}}{\kappa}\exp\left\{-\ep^{-(\theta-1)}\frac{\kappa}{2}\right\}+\frac{1}{\kappa^2}\right],
\end{align*}
which is uniformly bounded in $\ep$, $N$. Combining the estimates on $\{n=0\}$ and $\{n\geq 1\}$ gives the result.
\end{proof}

\begin{proof}[Adaptation of the proof of Proposition~\ref{p:2} under Assumption~(NG)] 
We need to check that~\eqref{eq:124} still holds, and also adapt~\eqref{eq:121}. The validity of~\eqref{eq:124} is a consequence of the theory of positive transition densities for degenerate diffusion stochastic differential equations, see~\cite[Section 3]{Herzog2015a} and~\cite{Mattingly2002a}. 

Let us now consider $x\in D,t\in[0,T]$. We define $\Phi_t(z)$ to be the cumulative distribution function of $q_1(t)$. We need to
estimate
$
  Z_{x,t}(y):=\Phi_t(x-y)+1-\Phi_t(x+y)
$
by providing a rapidly decaying estimate as $y\rightarrow+\infty$, similarly to~\eqref{eq:121}. We use Lemma~\ref{lem:4} to deduce
\begin{align}
  \label{eq:122}
  f_{q(t)}(q)\leq C\int_{\mathbb{R}}{M^{1/2-\alpha}(q,p)\m p}\leq Ce^{-kV(q)},
\end{align} 
where $f_{q(t)}$ denotes the probability density function of $q_1(t)$, where $\alpha\in(1/4,1/2)$, where
$k:=(1/2-\alpha)(2\gamma/\sigma^2)$, and where $M$ is given in Assumption~(NG). For $y\geq 3\max_{x\in D}{|x|}$, we consider the
limit
\begin{align*}
  \lim_{y\rightarrow +\infty}{\frac{Z_{x,t}(y)}{e^{-kV(y)}}} 
  & \leq \lim_{y\rightarrow +\infty}{\frac{\int_{\mathbb{R}\setminus[-y/2;y/2]}{e^{-kV(q)}\m q}}{e^{-kV(y)}}}
    \leq C\lim_{y\rightarrow +\infty}{\frac{-e^{-kV(y)}-e^{-kV(y)}}{V'(y)e^{-kV(y)}}}=0,
\end{align*}
where we have used~\eqref{eq:122} is the first inequality, and de L'H\^opital's rule and Assumption~(NG) for the second
inequality. The above limit, in combination with the growth rate of $V$ (at least quadratic thanks to Assumption~(NG)), guarantees
that
\begin{align*}
Z_{x,t}(y)\leq
     \begin{cases}
      1, & \mbox{ if } y \leq \overline{y}=\overline{y}(D,T,V), \\
      \exp\left(-\frac{y^2}{2\iota}\right), & \mbox{ if } y\geq\overline{y}.
     \end{cases}
\end{align*}
for some $\iota>0$. The above estimate replaces~\eqref{eq:121} in the remaining part of the proof, which is unchanged.
\end{proof}

\begin{rem}
  The growth condition for $V$ (i.e., the requirement $n\geq 1$, instead of $n>1/2$) is dictated by the Adaptation of Proof of
  Proposition~\ref{p:2}. This stricter condition is not necessary for the proofs of Lemma~\ref{lem:4} and Proposition~\ref{p:3}.
\end{rem}

\paragraph{Acknowledgements} FC is supported by a scholarship from the EPSRC Centre for Doctoral Training in Statistical Applied
Mathematics at Bath (SAMBa), under the project EP/L015684/1.  JZ gratefully acknowledges funding by a Royal Society Wolfson
Research Merit Award and the Leverhulme Trust for its support via grant RPG-2013-261. All authors thank the anonymous referees for
their careful reading of the manuscript and their valuable suggestions.

%

\def\cprime{$'$} \def\cprime{$'$} \def\cprime{$'$}
  \def\polhk#1{\setbox0=\hbox{#1}{\ooalign{\hidewidth
  \lower1.5ex\hbox{`}\hidewidth\crcr\unhbox0}}} \def\cprime{$'$}
  \def\cprime{$'$}

\end{document}